\newcommand{\tX}{\widetilde{X}}
\newcommand{\tY}{\widetilde{Y}}
\newcommand{\ttau}{\widetilde{\tau}}
\newcommand{\rtheta}{l}
\newcommand{\al}[1]{{\color{black} #1}} 
\newcommand{\cha}[1]{{\color{black} #1}}
\newtheorem{thm}{Theorem}[section]
\newtheorem{prop}[thm]{Proposition}
\newtheorem{cor}[thm]{Corollary}
\newtheorem{lemma}[thm]{Lemma}
\newtheorem{rem}[thm]{Remark}
\numberwithin{equation}{section}
\newenvironment{customass}[1]
  {\innercustomass}
  {\endinnercustomass}
\theoremstyle{definition} 
\theoremstyle{definition}
\renewcommand{\P}{\mathbb{P}}
\newcommand{\R}{\mathbb{R}}
\newcommand{\E}{\mathbb{E}}
\newcommand{\N}{\mathbb{N}}
\newcommand{\eps}{\varepsilon}
\newcommand{\brho}{\mathfrak b}
\newcommand{\ud}{\mathrm{d}}
\numberwithin{equation}{section}
\begin{document}

\title[Parasite infection]
{Spread of parasites affecting death and division rates in a cell population}

\author{Aline Marguet}
\address{Aline Marguet, Univ. Grenoble Alpes, INRIA, 38000 Grenoble, France}
\email{aline.marguet@inria.fr}

\author{Charline Smadi}
\address{Charline Smadi, Univ. Grenoble Alpes, INRAE, LESSEM, 38000 Grenoble, France
 and Univ. Grenoble Alpes, CNRS, Institut Fourier, 38000 Grenoble, France}
\email{charline.smadi@inrae.fr}

\date{}

\maketitle

\begin{abstract}
We introduce a general class of branching Markov processes for the modelling of a parasite infection in a cell population.
Each cell contains a quantity of parasites which evolves as a diffusion with positive jumps. The drift, diffusive function and positive jump rate of this 
quantity of parasites depend on its current value. The division rate of the cells also depends on the quantity of parasites they contain. At division, 
a cell gives birth to two daughter cells and shares its parasites between them.
Cells may also die, at a rate which may depend on the quantity of parasites they contain.
We study the long-time behaviour of the parasite infection.
 \end{abstract}

% \vspace{0.2in}

\noindent {\sc Key words and phrases}: Continuous-time and space branching Markov processes, Structured population, 
Long-time behaviour, Birth and Death Processes

\bigskip

\noindent MSC 2000 subject classifications: 60J80, 60J85, 60H10.

% \vspace{0.2cm}

%  \tableofcontents

\section*{Introduction}
We introduce a general class of continuous-time branching Markov processes for the study of
a parasite infection in a cell population. This framework is general enough to be applied for the modelling of other
structured populations, with individual traits evolving on the set of positive real numbers.
Another application we can think of, similar in spirit, is the modelling of the protein aggregates in a cell population. These latter, usually eliminated by the cells, can undergo sudden increases due to cellular stress (positive jumps), and are known to be distributed unequally between daughter cells (see \cite{rujano2006polarised} for instance).

The dynamic of the quantity of parasites in a cell is given by a Stochastic Differential Equation (SDE) with drift, diffusion and positive jumps.
Then, at a random time whose law may depend on the quantity of parasites in the cell, 
this latter dies or divides. At division, it shares its parasites between its two daughter cells.
We are interested in the long-time behaviour of the parasite infection in the cell population.
More precisely, we will focus on the quantity of parasites in the cells, including the possibility of explosion or extinction of the quantity of parasites. 
We will see that those quantities are very sensitive to the way cell division and death rates depend on the quantity of parasites in the cell, 
and to the law of the sharing of the parasites between the two daughter cells at division.

In discrete time, from the pioneer model of Kimmel \cite{Kimmel}, many studies have been conducted on branching within branching processes to study the host-parasite dynamics: on the associated quasistationary distributions \cite{bansaye2008}, considering random environment and immigration events \cite{bansaye2009}, on multitype branching processes \cite{alsmeyer2013, alsmeyer2016}. In continuous time, host-parasites dynamics have also  been studied using two-level branching processes, in which the dynamics of the parasites is modelled by a birth-death process with interactions \cite{meleard2013evolutive, osorio2020level} or a Feller process \cite{BT11,BPS} and the cell population dynamics by a structured branching process.

Some experiments, conducted in the TAMARA laboratory, have shown that cells distribute unequally their parasites between 
their two daughter cells \cite{stewart2005aging}.
This could be a mechanism aiming at concentrating the parasites in some cell lines in order to `save' the remaining lines.
It is thus important to understand the effect of this unequal sharing on the long-time behaviour of the infection in the cell population.
This question has been addressed by Bansaye and Tran in \cite{BT11}. They introduced and studied branching Feller diffusions with a cell division rate depending on the 
quantity of parasites in the cell and a sharing of parasites at division between the two daughter cells according to a random
variable with any symmetric distribution on $[0,1]$. They provided some extinction criteria for the 
infection in a cell line, in the case where 
the cell division rate is constant or a monotone function of the quantity of parasites in the cell, as well as recovery criteria
at the population level, 
in the constant division rate case. In \cite{BPS}, Bansaye {\it{et al.}} extended this study by providing the long
time asymptotic of the recovery rate in the latter case.
Our work further extends these results in several directions. First, we allow the drift and the diffusion coefficient of the quantity of parasites in a cell to vary with the quantity of 
parasites.
Second, we add the possibility to have positive jumps in the parasites dynamics, with a rate which may depend on the current
quantity of parasites in the cell.
Third, we allow the cell division rate to depend not monotonically on the quantity of parasites.
This situation is more difficult to study than the previous ones, as the genealogical tree of the cell population depends on 
the whole history of the quantity of parasites in the different cell lines.
Finally, we add the possibility for the cells to die at a rate which may depend on the quantity of parasites they contain, which complicates the underlying population process for the cells.

For the study of structured branching populations, a classical method to obtain information on the distribution of a trait in the population is to introduce a 
penalized Markov process, called auxiliary process, corresponding to the dynamics of the trait of a typical individual in the population, {\it i.e.} an individual picked uniformly at random. 
The link between the auxiliary process and the population process is given by Many-to-One formulae. We refer to \cite{georgii2003supercritical,baake2007mutation,hardy2009spine,bansaye2011limit,cloez2017limit,marguet2016uniform, marguet2017law} for general 
results on these topics in continuous time.

Part of our proof strategy consists thus in investigating the long-time behaviour of this auxiliary process  
and deduce properties on the 
asymptotic behaviour of the process at the population level, extending previous results derived for a smaller class of 
structured Markov branching processes (see \cite{BT11,bansaye2011limit,cloez2017limit} for instance). In the case of a constant growth rate for the cell population, the auxiliary process belongs to the class of continuous-state non-linear branching processes that has been studied in \cite{li2017general, companion}.
This case is explored in a companion paper \cite{marguet2023parasite}. In the general case, the auxiliary process is time-inhomogeneous, which makes the study of its asymptotic behaviour much more involved, even out of reach. An alternative approach consists in introducing a Markov process with another penalization, corresponding to the dynamics of the trait of an individual in the population, which is chosen according to some weight, and not picked uniformly at random \cite{cloez2017limit}. When this weight is a positive eigenvector of a given operator, the alternative auxiliary process that we obtain is time-homogeneous and its asymptotic study is easier. Additional work is needed to obtain properties at the population level.

The paper is structured as follows. In Section \ref{section_model}, we define the population process and give assumptions ensuring its existence and uniqueness 
as the strong solution to a SDE. In Section \ref{sec_constant_diff}, we provide \al{sufficient} conditions for the cell population to get rid of parasites or for the quantity of parasites 
to explode in all the cells in the case of the population survival. Then, in Section \ref{sec:proportion}, we explore asymptotics of the proportion of cells with a given level of infection. In particular in Section \ref{sec:LDCD}, we investigate the case of a linear division rate.
We prove that this strategy of division allows the cell population to contain the infection, and we give additional information on the asymptotic distribution of the quantity of parasites in the cells under some assumptions.
Sections \ref{sec:MTO} and \ref{sec:proofs} are dedicated to the proofs.
\\

In the sequel $\N:=\{0,1,2,...\}$ will denote the set of nonnegative integers, $\R_+:=[0,\infty)$ the real line,  $\overline{\R}_+:=\R_+\cup \{ + \infty \}$,
and $\R_+^*:=(0,\infty)$.
We will denote by $\mathcal{C}_{b}^2(\R_+)$ (resp. $\mathcal{C}_{0}^2(\R_+)$) the set of bounded (resp. vanishing at $0$ and infinity) twice continuously differentiable functions on $\R_+$. Finally, for any stochastic 
process $X$ on $\overline{\mathbb{R}}_+$ or $Z$ on the set of point measures on $\overline{\mathbb{R}}_+$, we will denote by 
$\mathbb{E}_x\left[f(X_t)\right]=\E\left[f(X_t)\big|X_0 = x\right]$ and $\mathbb{E}_{\delta_x}\left[f(Z_t)\right]=\E\left[f(Z_t)\big|Z_0 = \delta_x\right]$.

\section{Definition of the population process} \label{section_model}

\subsection{Parasites dynamics in a cell}

Each cell contains parasites whose quantity evolves as a diffusion with positive jumps. More precisely, we consider the SDE
\begin{align} \nonumber \label{X_sans_sauts} \mathfrak{X}_t =x + \int_0^t g(\mathfrak{X}_s)ds
+\int_0^t\sqrt{2\sigma^2 (\mathfrak{X}_s)}dB_s &+
\int_0^t\int_0^{p(\mathfrak{X}_{s^-})}\int_{\mathbb{R}_+}z\widetilde{Q}(ds,dx,dz)\\&+
\int_0^t\int_0^{\mathfrak{X}_{s^-}}\int_{\mathbb{R}_+}zR(ds,dx,dz),
\end{align}
where $x$ is nonnegative, $g$, $\sigma$ and $p$ are real functions on $\overline{\mathbb{R}}_+$, $B$ is a standard Brownian motion,  
$\widetilde{Q}$ is a compensated Poisson point measure (PPM) with intensity 
$ds\otimes dx\otimes \pi(dz)$, $\pi$ is a nonnegative measure on $\mathbb{R}_+$,
$R$ is a PPM with intensity 
$ds\otimes dx\otimes \rho(dz)$, and $\rho$ is a measure on $\mathbb{R}_+$ with density:
\begin{align*} 
%\label{def_rho}
\rho(dz)=\frac{c_\brho\brho(\brho+1)}{\Gamma(1-\brho)}\frac{1}{z^{2+\brho}}\ud z,
\end{align*}
where $\brho \in (-1,0)$ and \al{$c_\brho\leq 0$} (see \cite[Section 1.2.6]{kyprianou2006introductory} for details on stable distributions and processes).
Finally, $B$, $Q$ and $R$ are independent.

We will provide below conditions under which the SDE \eqref{X_sans_sauts} has a unique nonnegative strong solution.
In this case, it is a Markov process with infinitesimal generator $\mathcal{G}$, satisfying for all $f\in C_{0}^2(\mathbb{R}_+)$,
\begin{align} \label{def_gene}
\mathcal{G}f(x) = g(x)f'(x)+\sigma^2(x)f''(x)&+p(x)\int_{\mathbb{R}_+}\left(f(x+z)-f(x)-zf'(x)\right)\pi(dz)\\&+x\int_{\mathbb{R}_+}\left(f(x+z)-f(x)\right)\rho(dz)\nonumber ,
\end{align}
and $0$ and $+ \infty$ are two absorbing states.
Following \cite{marguet2016uniform}, we denote by $(\Phi(x,s,t),s\leq t)$ the corresponding stochastic flow {\it i.e.} 
the unique strong solution to \eqref{X_sans_sauts} satisfying $\mathfrak{X}_s = x$ and the dynamics of the trait between division events is well-defined.

\subsection{Cell division}

A cell with a quantity of parasites  
$x$ divides at rate $r(x)$ and is replaced by two 
individuals with quantity of parasites at birth given by $\Theta x$ and $(1-\Theta)x$. Here $\Theta$ is a nonnegative random variable on $(0,1)$ 
with associated distribution $\kappa(d\theta)$ \al{symmetric with respect to $1/2$ (so that $\Theta$ and $1-\Theta$ are identically distributed)} satisfying $\int_0^1 |\ln \theta|\kappa(d\theta)<\infty$.

\subsection{Cell death}

Cells have a death rate $q(x)$ which depends on the quantity of parasites $x$ they carry.
The function $q$ may be nondecreasing, because the presence of parasites may kill the cell, or nonincreasing, if parasites slow down the cellular machinery 
(production of proteins, division, {\it etc.}).

\subsection{Existence and uniqueness}

We use the classical Ulam-Harris-Neveu notation to identify each individual. Let us denote by
$\mathcal{U}:=\bigcup_{n\in\mathbb{N}}\left\{0,1\right\}^{n}$
the set of possible labels, $\mathcal{M}_P(\overline{\mathbb{R}}_+)$ the set of point measures on $\overline{\mathbb{R}}_+$, and 
$\mathbb{D}(\mathbb{R}_+,\mathcal{M}_P(\overline{\mathbb{R}}_+))$, the set of c\`adl\`ag measure-valued processes. 
For any $Z\in \mathbb{D}(\mathbb{R}_+,\mathcal{M}_P(\overline{\mathbb{R}}_+))$, $t \geq 0$, we write 
\begin{equation} \label{Ztdirac}
Z_t = \sum_{u\in V_t}\delta_{X_t^u},
\end{equation}
where $V_t\subset\mathcal{U}$ denotes the set of individuals alive at time $t$ and $X_t^u$ the trait at time $t$ of the individual $u$, \al{following \eqref{X_sans_sauts} between divisions}. \al{By convention, if $V_t=\emptyset$, $Z_t$ is defined as the null measure. For $u\in V_t$, and $s<t$, $X_s^u$ denotes the quantity of parasites in the (only) ancestor of $u$ alive at time $s$.} We denote by $N_{t} : = \mathrm{Card}(V_t)$ the number of cells in the population at time $t$.
Let $E = \mathcal{U}\times(0,1)\times \overline{\mathbb{R}}_+$  and 
$M(ds,du,d\theta,dz)$ be a PPM on $\mathbb{R}_+\times E$ with intensity 
$ds\otimes n(du)\otimes \kappa(d\theta)\otimes dz$, where $n(du)$ denotes the counting measure on $\mathcal{U}$. 
Let $\left(\Phi^u(x,s,t),u\in\mathcal{U},x\in\overline{\mathbb{\R}}_+, s\leq t\right)$ be a family of independent stochastic flows satisfying \eqref{X_sans_sauts} 
describing the individual dynamics.
We assume that $M$ and $\left(\Phi^u,u\in\mathcal{U}\right)$ are independent. We denote by $\mathcal{F}_t$ the filtration generated by the restriction of the PPM $M$ to $[0,t]\times E$ and 
the family of stochastic processes $(\Phi^u(x,s, t), u\in\mathcal{U},x\in\overline{\mathbb{R}}_+,s\leq t)$ up to time $t$. \\

We now consider assumptions to ensure the strong existence and uniqueness of the process.
We obtain a large class of branching Markov processes for the modelling of parasite infection in a cell population. Points $i)$ to $iii)$ of Assumption \ref{ass_A}.
(Existence and Uniqueness) ensure that 
the dynamics in a cell line is well-defined \cite{palau2018branching}
(as the unique nonnegative strong solution to the SDE \eqref{X_sans_sauts} up to explosion, and with infinite value after explosion); 
points $iv)$ and $v)$ ensure the non-explosion of the cell population size in finite time, as in \cite{marguet2016uniform}.
\begin{customass}{\bf{EU}} \label{ass_A}
We assume that
\begin{enumerate}[label=\roman*)]
\item The functions $r$ and $p$ are locally Lipschitz on $\R_+$, $p$ is non-decreasing and $p(0)= 0$. The function $g$ is continuous on $\R_+$, $g(0)=0$ and for any $n \in \N$ there exists a finite constant $B_n$ such that for any $0 \leq x \leq y \leq n$
\begin{align*} |g(y)-g(x)|
\leq B_n \phi(y-x),
\ 
\text{ where }\ 
\phi(x) = \left\lbrace\begin{array}{ll}
x \left(1-\ln x\right) & \textrm{if } x\leq 1,\\
1 & \textrm{if } x>1.\\
\end{array}\right.
\end{align*}
\item The function $\sigma$ is H\"older continuous with index $1/2$ on compact sets and $\sigma(0)=0$. 
\item The measure $\pi$ satisfies
$ \int_0^\infty \left(z \wedge z^2\right)\pi(dz)<\infty. $
\item There exist $r_1,r_2\geq 0$ and $\gamma\geq 0$ such that for all $x\geq 0$, $
r(x)-q(x)\leq r_1x^{\gamma}+r_2.$
\item There exist $c_1,c_2\geq 0$ such that, for all $x\in\mathbb{R}_+$,
\begin{align*}
\lim_{n\rightarrow+\infty}\mathcal{G}h_{n,\gamma}(x)\leq c_1 x^{\gamma}+c_2,
\end{align*}
where $\mathcal{G}$ is defined in \eqref{def_gene}, $\gamma$ has been defined in iv) and $h_{n,\gamma}\in C_b^2(\mathbb{R}_+)$ is a 
sequence of functions such that $\lim_{n\rightarrow+\infty} h_{n,\gamma}(x)=x^{\gamma}$ for all $x\in\mathbb{R}_+$.
\end{enumerate}
\end{customass}
Then the structured population process may be defined as the strong solution to a SDE. 
\begin{prop} \label{pro_exi_uni}
Under Assumption \ref{ass_A} there exists a strongly unique $\mathcal{F}_t$-adapted 
c\`adl\`ag process $(Z_t,t\geq 0)$ taking values in $\mathcal{M}_P(\overline{\mathbb{R}}_+)$ such that for all $f\in C_0^2(\overline{\mathbb{R}}_+)$ and $x_0,t\geq 0$, 
\begin{align*}\nonumber
\langle Z_{t},f\rangle = &f\left(x_{0}\right)+\int_{0}^{t}\int_{\mathbb{R}_+}\mathcal{G}f(x)Z_{s}\left(dx\right)ds+M_{t}^f(x_0)&\\\nonumber
 +\int_{0}^{t}\int_{E}&\mathbf{1}_{\left\{u\in V_{s^{-}}\right\}}\left(\mathbf{1}_{\left\{\ z\leq r(X_{s^{-}}^u)\right\}}\left(f\left(\theta X_{s^-}^u \right)+ f\left((1-\theta) X_{s^-}^u \right)-
f\left(X_{s^{-}}^{u}\right)\right)\right.\\
& \left.-\mathbf{1}_{\left\{0<z-r(X_{s^{-}}^u)\leq q(X_{s^{-}}^u)\right\}}
f\left(X_{s^{-}}^{u}\right)\right) M\left(ds,du,d\theta,dz\right),
\end{align*}
where $\mathcal{G}$ is defined in \eqref{def_gene} and for all $x\geq 0$, $M_{t}^f(x)$ is a $\mathcal{F}_t$-martingale.
\end{prop}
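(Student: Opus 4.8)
The plan is to construct $(Z_t)_{t\ge0}$ explicitly from the driving objects $\bigl(M,(\Phi^u)_{u\in\mathcal U}\bigr)$ by a pathwise recursion over the successive atoms of $M$ that actually trigger a division or a death, and then to prove that these events do not accumulate in finite time. The well-posedness of the individual dynamics is the input from points $i)$--$iii)$ of Assumption \ref{ass_A}: by \cite{palau2018branching}, under these conditions \eqref{X_sans_sauts} has a unique nonnegative strong solution up to explosion, extended by $+\infty$ afterwards, so each flow $\Phi^u$ and hence the motion of every branch between two branching events is unambiguously defined.

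For the recursion, let $(T_n)_{n\ge1}$ be the increasing sequence of times at which $M$ charges an atom $(s,u,\theta,z)$ with $u\in V_{s^-}$ and $z\le r(X_{s^-}^u)+q(X_{s^-}^u)$. As long as finitely many individuals are alive, each carries events at the finite rate $r(X_s^u)+q(X_s^u)$ (the relevant $z$-range has finite Lebesgue measure) and, $M$ being a Poisson measure, no two atoms coincide; hence the $T_n$ are well separated. On each interval $[T_{n-1},T_n)$ I let every living individual evolve along its own flow $\Phi^u$, and at $T_n$, reading off the charging atom $(s,u,\theta,z)$, I replace $u$ by two individuals with traits $\theta X_{s^-}^u$ and $(1-\theta)X_{s^-}^u$ if $z\le r(X_{s^-}^u)$, and I remove $u$ if $r(X_{s^-}^u)<z\le r(X_{s^-}^u)+q(X_{s^-}^u)$. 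This defines $Z_t=\sum_{u\in V_t}\delta_{X_t^u}$ on $[0,T_\infty)$, $T_\infty:=\sup_nT_n$, in an $\mathcal F_t$-adapted and forced way, which already gives pathwise uniqueness up to $T_\infty$. Applying It\^o's formula along the flows together with the compensation formula for $M$ to $t\mapsto\langle Z_t,f\rangle$, $f\in C_0^2(\overline\R_+)$, reorganizes the drift, diffusive and $\widetilde Q$/$R$ contributions of \eqref{X_sans_sauts} into $\int_0^t\langle Z_s,\mathcal Gf\rangle\,ds+M_t^f(x_0)$ with $M_t^f$ a local martingale, and leaves the division/death atoms as the last $M$-integral; this is exactly the stated identity, valid up to $T_\infty$.

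The crux is then to show $T_\infty=+\infty$ almost surely, for which I would use points $iv)$ and $v)$ through the Lyapunov functional $V(Z):=\langle Z,1+x^\gamma\rangle=N+\langle Z,x^\gamma\rangle$. Formally, the mean-measure (Many-to-One) generator acting on $1+x^\gamma$ is $\mathcal Af(x)=\mathcal Gf(x)+r(x)\int_0^1\bigl(f(\theta x)+f((1-\theta)x)-f(x)\bigr)\kappa(d\theta)-q(x)f(x)$: the $f\equiv1$ part contributes $r(x)-q(x)\le r_1x^\gamma+r_2$ by $iv)$, the motion part contributes $\mathcal G(x^\gamma)\le c_1x^\gamma+c_2$ in the limit $h_{n,\gamma}\to x^\gamma$ by $v)$, and the division/death part contributes $r(x)(m_\gamma-1)x^\gamma-q(x)x^\gamma$ with $m_\gamma=\int_0^1(\theta^\gamma+(1-\theta)^\gamma)\kappa(d\theta)$. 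I expect the sign bookkeeping of this last term to be the main obstacle. For $\gamma\ge1$ one has $m_\gamma\le1$, so the whole term is nonpositive and can be dropped, giving immediately $\mathcal A(1+x^\gamma)\le C(1+x^\gamma)$. For $\gamma<1$ one has $m_\gamma\ge1$, so $r(x)(m_\gamma-1)x^\gamma$ is a nonnegative contribution which is not controlled by $x^\gamma$ alone; reabsorbing it using $m_\gamma\le2$, the death term $-q(x)x^\gamma$, and the bound on $r-q$ is exactly the delicate interplay between $iv)$ and $v)$ that has to be handled with care.

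Granting the Lyapunov estimate $\mathcal A(1+x^\gamma)\le C(1+x^\gamma)$, I would run it rigorously on the truncations $h_{n,\gamma}\in C_b^2(\R_+)$ and with $f\equiv1$, stopped at $t\wedge T_k$: taking expectations kills the martingale $M^f$ and yields $\E[V(Z_{t\wedge T_k})]\le V(Z_0)+C\int_0^t\E[V(Z_{s\wedge T_k})]\,ds$, so Gronwall's lemma bounds $\E[V(Z_{t\wedge T_k})]$ uniformly in $k$ on every compact time interval. Since $N_{t\wedge T_k}\le V(Z_{t\wedge T_k})$, letting $k\to\infty$ (monotone convergence in $k$, and dominated convergence for $h_{n,\gamma}\to x^\gamma$ justified by the uniform moment bound just obtained) forces $T_\infty=+\infty$ and shows that the population size has finite expectation on bounded intervals. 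The same uniform bound upgrades $M^f$ from a local to a genuine $\mathcal F_t$-martingale, which, together with the forced construction, completes the proof of the representation and of strong existence and uniqueness.
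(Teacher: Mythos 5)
Your overall architecture (pathwise recursion over the atoms of $M$ that hit living individuals, individual motion along the flows $\Phi^u$ in between, then a Lyapunov--Gronwall bound to rule out accumulation of branching events) is the natural self-contained route, and it is essentially what the paper outsources: the paper's own proof is citation-based, combining \cite[Proposition 1]{palau2018branching} for the individual SDE \eqref{X_sans_sauts} with \cite[Theorem 2.1]{marguet2016uniform} for the population level. Its only explicit computation is the verification of a Lipschitz-type condition needed to apply \cite[Proposition 1]{palau2018branching} to the stable jump term (the $R$-integral with state-dependent rate $x\,\rho(dz)$), namely $\int \left|\left(\mathbf{1}_{\{x\leq u\}}-\mathbf{1}_{\{y\leq u\}}\right) z\wedge n\right| z^{-2-\brho}\,dz\,du \leq A_n|x-y|$; you pass over this point silently when you invoke \cite{palau2018branching}, so your "input" step is itself incomplete.

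The genuine gap, however, is the step you explicitly grant: the Lyapunov estimate $\mathcal{A}(1+x^\gamma)\leq C(1+x^\gamma)$ is \emph{false} in general when $\gamma\in(0,1)$, so the crux is not delicate bookkeeping that "has to be handled with care" but an obstruction that your functional cannot overcome. Concretely, take $q\equiv 0$, $r(x)=r_1x^{\gamma}$ (so that point $iv)$ of Assumption \ref{ass_A} holds) and $\kappa=\delta_{1/2}$; then $m_\gamma=2^{1-\gamma}>1$ and the division/death contribution to $\mathcal{A}(1+x^\gamma)$ equals $r(x)(m_\gamma-1)x^{\gamma}=r_1\left(2^{1-\gamma}-1\right)x^{2\gamma}$, which is not $O(1+x^{\gamma})$. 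Your proposed reabsorption does not help: $m_\gamma\leq 2$ only kills the $-q(x)x^\gamma$ part, and the bound $r\leq q+r_1x^\gamma+r_2$ from $iv)$ leaves exactly this $x^{2\gamma}$ remainder. Note that non-explosion is nevertheless true in this example (daughter traits halve at each division, so division rates decay along the genealogy), which shows that any correct argument for $\gamma<1$ must exploit the \emph{conservativity} of the splitting ($\theta x$ and $(1-\theta)x$ sum to $x$, and each is at most $x$) rather than only the crude bound $m_\gamma\leq 2$. This is precisely the content of the paper's remark that the requirement $\gamma\geq 1$ in \cite{marguet2016uniform} "is not necessary for conservative fragmentation processes", and why it defers the non-explosion step to (2.5) in \cite[Lemma 2.5]{marguet2016uniform}, where the population growth is shown to be governed by $r-q$. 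As written, your argument proves the proposition only for $\gamma\geq 1$ (where $m_\gamma\leq 1$ and your functional closes) and for $\gamma=0$.
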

The proof is a combination of \cite[Proposition 1]{palau2018branching} and \cite[Theorem 2.1]{marguet2016uniform} (see Appendix \ref{proof_ex_uni} for details). 
Note that we replaced (1) and (3) of Assumption A in \cite{marguet2016uniform} by {\it iv)} in Assumption \ref{ass_A}. A careful look at 
the proof of \cite[Theorem 2.1]{marguet2016uniform} (in particular (2.5) in \cite[Lemma 2.5]{marguet2016uniform}) shows that in our case, the growth of the 
population is governed by the function $x\mapsto r(x)-q(x)$ so that our condition is sufficient. Note also that in \cite{marguet2016uniform} the exponent 
$\gamma$ of {\it iv)} in Assumption \ref{ass_A} is required to be greater than $1$ but this condition is not necessary for conservative fragmentation 
processes as considered here. 
In what follows, we will assume that all the processes under consideration satisfy Assumption \ref{ass_A}, but we will not indicate it.\\

\section{Containment or explosion of the infection} \label{sec_constant_diff}

In this section, we consider general cell division and death rates, and we look for sufficient conditions for the quantity of parasites to 
become large (resp. small) in every alive cell. We exhibit a function characterizing the parasites growth rate in a typical cell and 
compare it to the growth rate of the cell population. 

For $a \in \R_+ \setminus \{1\}$, we introduce when it is well-defined the function $G_a$, for $x>0$ via
\begin{align}\label{eq:Ga}
G_a(x) : = (a-1)\left[\frac{g(x)}{x}-a\frac{\sigma^2(x)}{x^2} + x^{-\brho}C_a-p(x)I_a(x)-2r(x)\frac{\E[\Theta^{1-a}]-1}{a-1}\right],
%& -p(x)\int_{\mathbb{R}_+} \left(\left(\frac{z}{x}+1\right)^{1-a}-1-(1-a)\frac{z}{x}\right)\pi(dz),
\end{align}
where we recall that $\Theta$ is a random variable with distribution $\kappa$ and use the notations
\begin{align}\label{eq:Ia}
C_{a}=\frac{1}{2+\mathfrak{b}}\int_0^\infty\frac{z\rho(dz)}{(1+z)^{a}},\quad 
%=\int_{\mathbb{R}_+}z\int_0^1(1+vz)^{-a}dv\rho(dz)
I_a(x) = \frac{\left(a + \mathbf{1}_{\{a=0\}}\right)}{x^{2}}\int_0^\infty \left(\int_0^1\frac{z^2(1-v)dv}{(1 + zvx^{-1})^{1+a}}\right)\pi(dz).
\end{align}
From \cite[Lemma 7.1]{companion} (see Lemma \ref{prop:martingale-co} in our case), if $Y$ denotes the auxiliary  process describing the 
behaviour of a `typical individual',
then the process $$(Y_t^{1-a}e^{\int_0^t G_a(Y_s)ds}, t \geq 0)$$ is a local martingale, which entails that, $Y_t^{1-a}$ roughly behaves as 
$e^{-\int_0^tG_a(Y_s)ds}$ and thus, $G_a$ contains informations on the dynamics of the quantity of parasites in a typical cell \cha{(see Section \ref{sec:MTO} for explanations and computations on auxiliary processes and Many-to-One formulae)}. 
Moreover, the growth rate of a cell population with a constant quantity $x$ of parasites is $r(x)-q(x)$. The next assumptions combine conditions on 
those two key quantities, leading to results on the asymptotic behaviour of the infection in the entire population. 
\begin{customass}{\bf{EXPL}} \label{ass_expl_tps_fini}
 There exist $a>1$ such that $\E[\Theta^{1-a}]<\infty$ and $0\leq \gamma <\gamma'$ such that 
 $$r(x)-q(x) \leq  \gamma  <\gamma' \leq G_{a}(x), \quad \forall x \geq 0. $$
\end{customass}
\begin{customass}{\bf{EXT}} \label{ass_abs_tps_fini}
There  exist $a<1$ and $0\leq \gamma <\gamma'$ such that 
$$r(x)-q(x) \leq  \gamma  <\gamma' \leq G_{a}(x), \quad \forall x \geq 0. $$
\end{customass}
 Note that for $a>1$, the condition $G_a(x)\geq \gamma'$ in Assumption \ref{ass_expl_tps_fini} is equivalent to 
\begin{align*}
 \frac{g(x)}{x}-a\frac{\sigma^2(x)}{x^2} + x^{-\brho}C_a-p(x)I_a(x)-2r(x)\frac{\E[\Theta^{1-a}-1]}{a-1}\geq \al{\frac{\gamma'}{a-1}},
\end{align*}
for all $x\geq 0$, which can be interpreted as the growth of the parasites being stronger than the growth of the cell population and the noise. On the contrary, for $a<1$, the condition $G_a(x)\geq \gamma'$ of Assumption \ref{ass_abs_tps_fini} is equivalent to 
\begin{align*}
 \frac{g(x)}{x}-a\frac{\sigma^2(x)}{x^2} + x^{-\brho}C_a-p(x)I_a(x)-2r(x)\frac{\E[\Theta^{1-a}-1]}{a-1}\leq \al{-\frac{\gamma'}{1-a}}.
\end{align*}
In this case, the cell population growth and the noise outweigh the growth of the parasites.
\begin{prop} \label{prop_constant_diff}
Let $K>0$. Then for every $x>0$,
\begin{enumerate}[label=(\roman*)]
  \item Under Assumption \ref{ass_expl_tps_fini},
  $ \lim_{t \to \infty} \P_{\delta_x}\left( \exists u \in V_t, X_t^u \leq K \right) =0. $
  \item Under Assumption \ref{ass_abs_tps_fini},
  $ \lim_{t \to \infty} \P_{\delta_x}\left( \exists u \in V_t, X_t^u \geq K \right) =0. $
\end{enumerate}
\end{prop}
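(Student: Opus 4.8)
The plan is to reduce both statements to a control of the \emph{first moment} of the number of cells with parasite load below (resp. above) $K$, and then to express this first moment through the auxiliary process $Y$ via the Many-to-One formula. First I would use a union bound together with Markov's inequality for the counting measure,
\begin{align*}
\P_{\delta_x}\left(\exists u \in V_t,\ X_t^u \leq K\right) \leq \E_{\delta_x}\left[\sum_{u\in V_t}\mathbf{1}_{\{X_t^u\leq K\}}\right],
\end{align*}
and symmetrically with $\{X_t^u \geq K\}$ in case (ii), so that it suffices to prove that the right-hand side vanishes as $t\to\infty$. For this I would invoke the Many-to-One formula developed in Section \ref{sec:MTO}, which rewrites the mean measure through the auxiliary process $Y$ (starting from $Y_0=x$) and the Feynman-Kac weight carried by the net growth rate $r-q$,
\begin{align*}
\E_{\delta_x}\left[\sum_{u\in V_t}f(X_t^u)\right] = \E_x\left[f(Y_t)\exp\left(\int_0^t\left(r(Y_s)-q(Y_s)\right)ds\right)\right],
\end{align*}
applied, after a monotone approximation of the indicator by $C_0^2$ functions, to $f=\mathbf{1}_{[0,K]}$ (resp. $f=\mathbf{1}_{[K,\infty]}$).

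The second step is to bound the weight and then control $Y$ through the martingale of Lemma \ref{prop:martingale-co}. Under Assumption \ref{ass_expl_tps_fini} (resp. \ref{ass_abs_tps_fini}) the hypothesis $r-q\leq\gamma$ gives $\exp(\int_0^t(r(Y_s)-q(Y_s))ds)\leq e^{\gamma t}$, hence
\begin{align*}
\E_{\delta_x}\left[\sum_{u\in V_t}\mathbf{1}_{\{X_t^u\leq K\}}\right]\leq e^{\gamma t}\,\P_x\left(Y_t\leq K\right).
\end{align*}
To estimate $\P_x(Y_t\leq K)$ I would use that $(Y_t^{1-a}e^{\int_0^t G_a(Y_s)ds})$ is a \emph{nonnegative} local martingale, hence a supermartingale, so that $\E_x[Y_t^{1-a}e^{\int_0^t G_a(Y_s)ds}]\leq x^{1-a}$; the bound $G_a\geq\gamma'$ then yields $\E_x[Y_t^{1-a}]\leq x^{1-a}e^{-\gamma' t}$. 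In case (i), $a>1$ makes $y\mapsto y^{1-a}$ decreasing, so $\{Y_t\leq K\}=\{Y_t^{1-a}\geq K^{1-a}\}$ and Markov's inequality gives $\P_x(Y_t\leq K)\leq (x/K)^{1-a}e^{-\gamma' t}$; in case (ii), $a<1$ makes $y\mapsto y^{1-a}$ increasing, so $\{Y_t\geq K\}=\{Y_t^{1-a}\geq K^{1-a}\}$ and the same inequality applies verbatim.

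Combining the two displays yields, in both cases, a bound of the form $(x/K)^{1-a}e^{(\gamma-\gamma')t}$, which tends to $0$ since $\gamma<\gamma'$. The main obstacle I anticipate is not the arithmetic but the rigorous passage from the \emph{local} martingale to the supermartingale inequality $\E_x[M_t]\leq M_0$, together with the boundary behaviour of $Y$ at the absorbing states $0$ and $+\infty$ where $y^{1-a}$ degenerates: the former is settled by the nonnegativity of $M_t$ (Fatou along a localizing sequence), and the latter by the convention that $0$ and $+\infty$ are absorbing while the Feynman-Kac weight stays finite on compacts. A secondary technical point is the justification of the Many-to-One identity for the discontinuous test functions $\mathbf{1}_{[0,K]}$ and $\mathbf{1}_{[K,\infty]}$, which I would obtain by monotone approximation from $C_0^2$ functions together with monotone convergence.
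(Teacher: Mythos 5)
Your proposal is correct, and it shares the paper's skeleton: reduce to the auxiliary process $Y$ of \eqref{SDE_Y_diff_ct} via the Many-to-One inequality of Lemma \ref{maj_mto}, then exploit the $G_a$-weighted martingale of Lemma \ref{prop:martingale-co} together with Markov's inequality. But you execute the key estimate by a genuinely different, and more economical, route. The paper (Lemma \ref{lem_gene}) keeps the power function inside the branching expectation, truncated as $(X_t^u\vee\zeta)^{1-a}$, and controls the auxiliary process by splitting at the stopping time $T(\eps)=\tau^-(\eps)\wedge\tau^+(1/\eps)$; this forces it to prove beforehand that $Y$ never hits $0$ under Assumption \ref{ass_expl_tps_fini} (resp.\ never explodes under Assumption \ref{ass_abs_tps_fini}), which is done by coupling with simpler processes and by adapting the companion-paper results (Appendices \ref{app:th33-with-stable-jumps} and \ref{app:th41-with-stable-jumps}), followed by dominated convergence arguments. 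You instead apply Markov to indicator functions at the branching level, so that the unbounded function $y^{1-a}$ appears only at the level of $Y$, where Fatou's lemma along the localizing sequence $T_n=\tau^-(1/n)\wedge\tau^+(n)$ yields $\E_x\left[Y_t^{1-a}\right]\leq x^{1-a}e^{-\gamma' t}$ directly. This makes the non-extinction/non-explosion inputs unnecessary; in fact they drop out as byproducts, since finiteness of your bound forces $\P_x(Y_t=0)=0$ in case (i) and $\P_x(Y_t=\infty)=0$ in case (ii). What your shortcut buys is the elimination of the coupling and appendix machinery for this particular proposition; what the paper's longer route buys is the stronger moment statement of Lemma \ref{lem_gene} and pathwise boundary results for $Y$.

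Two points need care to make your sketch rigorous. First, $Y_t^{1-a}e^{\int_0^t G_a(Y_s)ds}$ is not literally a nonnegative local martingale on $[0,\infty)$: it is ill-defined once $Y$ is absorbed at $0$ or $+\infty$ (where $G_a$ makes no sense), and Lemma \ref{prop:martingale-co} only gives the martingale property of the version stopped at $\tau^-(c)\wedge\tau^+(b)$. So the "supermartingale by Fatou" step must be run on the stopped martingales, and the crux is the pointwise inequality $\liminf_n Z_{t\wedge T_n}\geq Y_t^{1-a}e^{\gamma' t}$ with the conventions $0^{1-a}=+\infty$, $\infty^{1-a}=0$ when $a>1$ (reversed when $a<1$). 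This holds precisely because on the "bad" boundary event (hitting $0$ for $a>1$, exploding for $a<1$) one has $Y_{T_n}^{1-a}\to\infty$ along the localizing sequence while the exponential weight stays $\geq 1$ thanks to $G_a\geq\gamma'>0$, whereas on the "good" boundary the right-hand side vanishes and the inequality is trivial; your appeal to "the convention that $0$ and $+\infty$ are absorbing" papers over exactly this verification, which should be spelled out. Second, the exact Feynman--Kac identity you invoke is stronger than what the paper establishes: Lemma \ref{maj_mto} proves only the one-sided bound $\E_{\delta_x}\left[\sum_{u\in V_t}f(X_t^u)\right]\leq e^{\gamma t}\E_x\left[f(Y_t)\right]$ (via comparison of Kolmogorov backward equations), but since that inequality is all your argument uses, nothing is lost by quoting it instead of the equality.
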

Thus, \al{if the cell population survives with a positive probability, then, conditionally on survival, the quantity of parasites goes to infinity in all cells in case {\it (i)}, and
in case {\it (ii)}, the quantity of parasites goes to zero in all cells with a probability close to one. The study of the survival of the cell population for general dynamics is complex. However, if there exists $c>0$ such that $r(x)-q(x)>c$ for all $x>0$, we can prove that the probability of survival of the cell population is positive.}

\cha{The assumptions of Proposition \ref{prop_constant_diff} may seem strong \al{as global inequalities (for all $x$)} are required. But the strength of this result lies \al{in its generality: it allows a large degree of freedom in both }the way cell division and death rates depend on the quantity of parasites they contain, as well as in the rate of positive jumps in the dynamics of parasites inside the cells. In particular, no monotonicity or concavity/convexity \al{assumption} is required.}

\cha{To illustrate this result, we consider the following example for $z \in \R_+$, $\theta \in (0,1)$:
$$\pi(dz) = \alpha_{\pi}z^{-1-\beta_{\pi}}dz, \quad \kappa(d\theta)=\delta_{1/2}(d\theta), \quad 
\sigma^2(z)=\alpha_{\sigma}z^2 +\beta_{\sigma}z, \quad g(z)= \alpha_g z, $$
with $\alpha_g,\alpha_{\sigma},\beta_{\sigma}, \alpha_{\pi}\in \mathbb{R}_+$ and $\beta_{\pi}\in (1,2)$.
Notice that using the change of variable $dz= x dy$ we get that $I_a(x)$ defined in \eqref{eq:Ia} may be rewritten
$$ I_a(x) = \alpha_\pi\frac{\left(a + \mathbf{1}_{\{a=0\}}\right)}{x^{\beta_\pi}}\int_0^\infty \left(\int_0^1\frac{y^{1-\beta_\pi}(1-v)dv}{(1 + yv)^{1+a}}\right)dy =: \al{\frac{\mathfrak{I}^{(\pi)}_a}{x^{\beta_\pi}}}. $$
\al{For this example}, Assumption \ref{ass_expl_tps_fini} writes: \al{there exists $a>1$ and $0<\gamma<\gamma'$ such that}
$$ q(x) \geq  r(x)-\gamma \quad \text{and} \quad r(x) \leq \frac{a-1}{2^a - 2} \left[ \alpha_g +x^{-\brho}C_a - \frac{\gamma'}{ a-1}- \mathfrak{I}^{(\pi)}_a\frac{p(x)}{x^{\beta_\pi}}- a (\alpha_\sigma + \frac{\beta_\sigma}{x}) \right].  $$
As $r(0)\geq 0$, we necessarily have $\beta_\sigma=0$. Hence, for this example,
$$ \text{\ref{ass_expl_tps_fini}} \Leftrightarrow \exists a>1, 0<\gamma<\gamma', \forall x \geq 0, \left\{\begin{array}{l} 
\beta_\sigma=0,\\
q(x) \geq  r(x)-\gamma,\\
r(x) \leq \frac{a-1}{2^a - 2} \left[ \alpha_g +\frac{C_a}{x^{\brho}} - \frac{\gamma'}{ a-1}- \mathfrak{I}^{(\pi)}_a\frac{p(x)}{x^{\beta_\pi}}- a \alpha_\sigma \right].
\end{array}\right. $$
In particular, this implies that $\alpha_g \geq \gamma'/(a-1)+ a \alpha_\sigma$, and thus the growth of parasites has to be strong enough to compensate the population growth and the fluctuations. The freedom in the choice of the function $r(\cdot)$ then depends on the equilibrium between the intensity of stable jumps given by the term $x^{-\brho}C_a$ and the fluctuations due to the (non stable) positive jumps, given by the term $ \mathfrak{I}^{(\pi)}_a\frac{p(x)}{x^{\beta_\pi}}$. Similar computations for the other case give:
$$ \text{\ref{ass_abs_tps_fini}} \Leftrightarrow \exists a<1, 0<\gamma<\gamma', \forall x \geq 0, \left\{\begin{array}{l} 
q(x) \geq  r(x)-\gamma,\\
r(x) \geq \frac{1-a}{2-2^a} \left[\frac{\gamma'}{1-a}+ \alpha_g +\frac{C_a}{x^{\brho}} - \mathfrak{I}^{(\pi)}_a\frac{p(x)}{x^{\beta_\pi}}- a \al{\left(\alpha_\sigma +\frac{\beta_\sigma}{x}\right)} \right].
\end{array}\right. $$
which can be analysed in a similar fashion.}\\

Proposition \ref{prop_constant_diff} applies to a large class of dynamics for the growth of parasites within cells, but imposes a limited \cha{population} growth (bounded $r-q$). In the next two sections, we focus on more general cases for \cha{population} growth, but we require more assumptions on parasite dynamics. \al{To ease the comparison with the results of the next sections, we give a straightforward corollary on the convergence of the proportion of infected cells. 
\begin{cor}\label{cor:convergence-proportions}
Let $K>0$. Then, for every $x>0$,
\begin{enumerate}[label=(\roman*)]
  \item Under Assumption \ref{ass_expl_tps_fini},
  $$ \lim_{t \to \infty} \E_{\delta_x}\left[ \mathbf{1}_{N_t\geq 1}\frac{\sum_{u \in V_t}\mathbf{1}_{\{ X_t^u \leq K \}}}{N_t}\right] =0. $$
  \item Under Assumption \ref{ass_abs_tps_fini},
  \begin{equation} \label{res_run_1} \lim_{t \to \infty} \E_{\delta_x}\left[\mathbf{1}_{N_t\geq 1} \frac{\sum_{u \in V_t}\mathbf{1}_{\{ X_t^u \geq K \}}}{N_t}\right] =0. \end{equation}
\end{enumerate}
\end{cor}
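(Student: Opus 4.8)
The plan is to derive the corollary directly from Proposition \ref{prop_constant_diff} by a simple pointwise domination argument, exploiting the fact that the random proportion appearing in each statement is bounded by $1$ and vanishes exactly when no cell has the relevant infection level. Since Proposition \ref{prop_constant_diff} already controls the probability that some cell is below (resp. above) the threshold $K$, no further analysis of the auxiliary process or the population dynamics is needed: the corollary is a soft consequence packaging the proposition in a form amenable to comparison with the next sections.

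Concretely, for case \textit{(i)} I would argue as follows. Fix $t \geq 0$ and work pathwise on the event $\{N_t \geq 1\}$, where the quotient is well-defined. The numerator $\sum_{u \in V_t}\mathbf{1}_{\{X_t^u \leq K\}}$ is a nonnegative integer bounded by $N_t$, so the proportion $\frac{\sum_{u \in V_t}\mathbf{1}_{\{X_t^u \leq K\}}}{N_t}$ takes values in $[0,1]$. Moreover this proportion is strictly positive only when the numerator is at least $1$, that is, only on the event $\{\exists u \in V_t,\, X_t^u \leq K\}$. Combining these two observations yields the pointwise bound
\begin{align*}
\mathbf{1}_{N_t\geq 1}\frac{\sum_{u \in V_t}\mathbf{1}_{\{ X_t^u \leq K \}}}{N_t}
\;\leq\; \mathbf{1}_{\{\exists u \in V_t,\, X_t^u \leq K\}},
\end{align*}
since the left-hand side is either $0$ (when no cell satisfies $X_t^u \leq K$) or at most $1$ (when the indicator on the right equals $1$).

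Taking expectations under $\P_{\delta_x}$ then gives
\begin{align*}
\E_{\delta_x}\!\left[\mathbf{1}_{N_t\geq 1}\frac{\sum_{u \in V_t}\mathbf{1}_{\{ X_t^u \leq K \}}}{N_t}\right]
\;\leq\; \P_{\delta_x}\!\left(\exists u \in V_t,\, X_t^u \leq K\right),
\end{align*}
and since the right-hand side tends to $0$ as $t \to \infty$ by Proposition \ref{prop_constant_diff}\textit{(i)} while the left-hand side is nonnegative, the squeeze yields the claimed limit. Case \textit{(ii)} is handled verbatim, replacing the events $\{X_t^u \leq K\}$ by $\{X_t^u \geq K\}$ throughout and invoking Proposition \ref{prop_constant_diff}\textit{(ii)} for the corresponding probability bound. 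I do not expect any genuine obstacle here, as the entire difficulty resides in Proposition \ref{prop_constant_diff}; the only point requiring care is the elementary domination inequality above, and in particular the remark that on $\{N_t \geq 1\}$ the bounded proportion is forced to be zero as soon as the indicator on the right-hand side vanishes.
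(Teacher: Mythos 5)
Your proof is correct and follows essentially the same route as the paper: both dominate the proportion pointwise by the indicator $\mathbf{1}_{\{\exists u \in V_t,\, X_t^u \leq K\}}$ (using that the proportion lies in $[0,1]$ and vanishes when no cell meets the threshold), take expectations, and conclude by Proposition \ref{prop_constant_diff}. No gaps; this matches the paper's argument.
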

Point $(i)$ follows from Proposition \ref{prop_constant_diff} and the inequalities:
$$ \E_{\delta_x}\left[ \mathbf{1}_{N_t\geq 1}\frac{\sum_{u \in V_t}\mathbf{1}_{\{ X_t^u \leq K \}}}{N_t}\right] \leq \E_{\delta_x}\left[ \mathbf{1}_{N_t\geq 1}\mathbf{1}_{\exists u \in V_t, X_t^u \leq K}\right]\leq \P_{\delta_x}\left( \exists u \in V_t, X_t^u \leq K \right). $$
The proof of point $(ii)$ is similar.}

\section{Asymptotics of the proportion of cells with a given level of infection}\label{sec:proportion}

\subsection{Running example}

\cha{Before presenting the precise assumptions and results of this section, we introduce a class of possible \al{dynamics} for the parasites and the \al{cell population}. This running example will allow a simpler interpretation of the results as well as comparisons of assumptions and results presented in the different subsections and in the previous section.}

\cha{Let $\alpha_g,\alpha_{\sigma},\beta_{\sigma}\in \mathbb{R}_+$. We will consider a Feller diffusion in a Brownian environment:
\begin{align} \label{Feller_diff_jumps} \mathfrak{X}_t =&\mathfrak{X}_0 + \int_0^t \alpha_g\mathfrak{X}_sds
+\int_0^t\sqrt{2\left(\alpha_{\sigma}\mathfrak{X}_s^2 +\beta_{\sigma}\mathfrak{X}_s\right)}dB_s,
\end{align}
where $\mathfrak{X}_0 \in \R_+$, $B$ has been defined in \eqref{X_sans_sauts}.
On top of the parasites dynamics, we consider the framework of a linear division and death rate for the cell population and symmetric partitioning: for $\alpha,\beta,\alpha_q,\beta_q\in\mathbb{R}_+$, and for $x \geq 0$, let
\begin{align}\label{eq:def_example_cells}
r(x) = \alpha x + \beta,\quad q(x) = \alpha_q x+ \beta_q,\quad \kappa(d\theta)=\delta_{1/2}(d\theta).
\end{align}
For this example, Assumption \ref{ass_A} holds and the structured branching process is well-defined.
}

\subsection{The role of noise}\label{sec:roleofnoise}

We consider a simple case where the parasites follow a Geometric Brownian motion. This is a particular case of CSBP in a L\'evy environment. The case of the Brownian environment, specifically studied in \cite{boeinghoff2011branching,palau2017continuous}, can be seen as an agitation or disturbance due to the cell environment. The cells might, for example, tune the intensity of this perturbation (quantified by the parameter $\sigma$) by modifying the temperature or the fluidity of the cell medium. The assumptions of this first case are summarized below.
\begin{customass}{\bf{LGBE}}{(Linear growth, Brownian environment)}\label{ass:betar-q}
We assume that
\begin{itemize}
\item There are no jumps in the parasites dynamics ($p=\mathfrak{c}_\mathfrak{b}= 0$).
\item There exist $g,\sigma\in\mathbb{R}_+$ such that $g(x) = gx$ and $\sigma^2(x) = \sigma^2x^2$. 
\item There exist $\gamma\in [0,+\infty)$ and $\al{\mathfrak{c}\in \R}$ such that $\gamma r(x) -q(x) = \mathfrak{c}$ for all $x\geq 0$. 
\end{itemize}
\end{customass}
Under Assumption \ref{ass:betar-q}, we provide conditions under which the mean number of cells with a small (resp. a large) quantity of parasites is equivalent to the mean number of cells alive in the population. \al{Let us define
$$
\alpha_0:=\inf\left\{\alpha<0\text{ s.t. }\E[\Theta^{\alpha}]<\infty\right\},
$$ 
with the convention that $\alpha_0=-\infty$ if $\E[\Theta^{\alpha}]<\infty$ for all $\alpha<0$. Then, for each $\gamma\geq 0$, let $\alpha_\gamma> \alpha_0$ to be such that 
$
2\E\left[\Theta^{\al{\alpha_\gamma}}\right] - 1 = \al{\gamma},
$
which is well-defined because $\psi:\alpha\in (\alpha_0,1)\mapsto  2\E\left[\Theta^\alpha\right] - 1$ is decreasing, and $\psi((\alpha_0,1]) = [0,+\infty)$.}
\begin{prop} \label{prop:betar-q} Let $\gamma\in [0,+\infty)$ be such that Assumption \ref{ass:betar-q} holds and \al{recall that $\alpha_{\gamma}>\alpha_0$} is such that $2\E\left[\Theta^{\alpha_\gamma}\right] - 1 = \gamma$. Then,
\begin{enumerate}[label=\roman*)]
\item\label{point1-propbetar-q} if $\gamma\in [0,1)$ (then $\alpha_\gamma\in (0,1]$) and $\alpha_\gamma<1-g/\sigma^2$, for all $K>0$ and $t \geq 0$,
\begin{align*}
\frac{\E_{\delta_x}\left[\sum_{u\in V_t} \mathbf{1}_{\lbrace X_t^u>K\rbrace}\right]}{\E_{\delta_x}\left[N_t\right]} \cha{ \leq  \left(\frac{x}{K}\right)^{\al{\alpha_\gamma}}e^{\alpha_\gamma(g + (\alpha_\gamma -1)\sigma^2)t} }.
\end{align*}
\item\label{point2-propbetar-q} if $\gamma\in [0,1)$ (then $\alpha_\gamma\in (0,1]$) and there exist $x_0(\alpha_\gamma)\geq 0$ and $\eta>0$ such that 
\begin{equation} \label{star1} g\leq \sigma^2(1-2\alpha_\gamma) + 2\E\left[\Theta^{\alpha_\gamma}\ln(1/\Theta)\right]r(x)-\eta,\quad \forall x\geq x_0(\alpha_\gamma),\end{equation} then for all $\eps,x>0$ there exists $K_{\eps,x}$ such that $K\geq K_{\eps,x}$ implies
\begin{equation} \label{res_run_2}
\limsup_{t \to \infty}\frac{\E_{\delta_x}\left[\sum_{u\in V_t} \mathbf{1}_{\lbrace X_t^u\geq K\rbrace}\right]}{\E_{\delta_x}\left[N_t\right]}\leq \eps.
\end{equation}
\item\label{point3-propbetar-q} if $\gamma>1$ (then $\alpha_\gamma<0$) and there exist $x_0(\alpha_\gamma)\geq 0$ and $\eta>0$ such that
\begin{equation} \label{star2} 
g \geq \sigma^2(1+2|\alpha_\gamma|) + 2\E\left[\Theta^{\alpha_\gamma}\ln(1/\Theta)\right]r(x)+\eta,\quad \forall x\leq x_0(\alpha_\gamma) ,
\end{equation}
 then for all $\eps,x>0$ there exists $K_{\eps,x}$ such that $K\leq K_{\eps,x}$ implies
\begin{align*}
\limsup_{t \to \infty}\frac{\E_{\delta_x}\left[\sum_{u\in V_t} \mathbf{1}_{\lbrace X_t^u\leq K\rbrace}\right]}{\E_{\delta_x}\left[N_t\right]}\leq \eps.
\end{align*}
\end{enumerate}
\end{prop}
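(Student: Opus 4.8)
The plan is to reduce every ratio to the first–moment semigroup $m_tf(x):=\E_{\delta_x}[\sum_{u\in V_t}f(X_t^u)]$, whose generator, using the symmetry of $\kappa$, is $\mathcal A f(x)=\mathcal G f(x)+r(x)\big(2\int_0^1 f(\theta x)\kappa(\ud\theta)-f(x)\big)-q(x)f(x)$. Under \ref{ass:betar-q} a direct computation gives, for the test functions $x\mapsto x^\alpha$,
$\mathcal A(x^\alpha)=\Lambda(\alpha,x)\,x^\alpha$ with $\Lambda(\alpha,x):=\lambda(\alpha)+\psi(\alpha)r(x)-q(x)$ and $\lambda(\alpha):=\alpha(g+(\alpha-1)\sigma^2)$. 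The decisive point is that $\alpha_\gamma$ satisfies $\psi(\alpha_\gamma)=\gamma$; combined with $\gamma r-q=\mathfrak c$ this kills the $x$–dependence, $\Lambda(\alpha_\gamma,x)=\lambda(\alpha_\gamma)+\mathfrak c=:\Lambda_\gamma$, so $x^{\alpha_\gamma}$ is an eigenfunction of $\mathcal A$ and $m_t(x^{\alpha_\gamma})=x^{\alpha_\gamma}e^{\Lambda_\gamma t}$. The Many-to-One formula of Section \ref{sec:MTO} with the weight $h(x)=x^{\alpha_\gamma}$ then represents all population averages through a single time-homogeneous auxiliary process $Y$, namely the diffusion $\ud Y=(g+2\alpha_\gamma\sigma^2)Y\,\ud t+\sqrt2\,\sigma Y\,\ud B$ with multiplicative jumps $Y\mapsto\Theta Y$ at rate $(\gamma+1)r(Y)$, $\Theta$ drawn from the tilted law $\widehat\kappa(\ud\theta)\propto\theta^{\alpha_\gamma}\kappa(\ud\theta)$:
$\E_{\delta_x}[\sum_{u\in V_t}f(X_t^u)]=e^{\Lambda_\gamma t}x^{\alpha_\gamma}\,\mathbf E_x[f(Y_t)Y_t^{-\alpha_\gamma}]$. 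Dividing two such expressions, the prefactor $e^{\Lambda_\gamma t}x^{\alpha_\gamma}$ cancels and each ratio in the statement becomes $\mathbf E_x[\mathbf 1_A(Y_t)Y_t^{-\alpha_\gamma}]/\mathbf E_x[Y_t^{-\alpha_\gamma}]$.

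For \ref{point1-propbetar-q} I would argue directly on moments. Since $\alpha_\gamma\in(0,1]$, Markov's inequality inside the sum gives $\sum_u\mathbf 1_{\{X_t^u>K\}}\le K^{-\alpha_\gamma}\sum_u(X_t^u)^{\alpha_\gamma}$, so the numerator is at most $K^{-\alpha_\gamma}x^{\alpha_\gamma}e^{\Lambda_\gamma t}$. For the denominator, $\gamma<1$ yields $r-q=(1-\gamma)r+\mathfrak c\ge\mathfrak c$, whence $\tfrac{\ud}{\ud t}\E_{\delta_x}[N_t]=\E_{\delta_x}[\sum_u(r-q)(X_t^u)]\ge\mathfrak c\,\E_{\delta_x}[N_t]$ and $\E_{\delta_x}[N_t]\ge e^{\mathfrak c t}$ by Grönwall. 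The ratio is then bounded by $(x/K)^{\alpha_\gamma}e^{\lambda(\alpha_\gamma)t}=(x/K)^{\alpha_\gamma}e^{\alpha_\gamma(g+(\alpha_\gamma-1)\sigma^2)t}$, and the hypothesis $\alpha_\gamma<1-g/\sigma^2$, i.e. $\lambda(\alpha_\gamma)<0$, is exactly what makes this decay.

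For \ref{point2-propbetar-q} and \ref{point3-propbetar-q} the mechanism is the long-time confinement of $Y$. The key computation is the drift of $\log Y_t$: the diffusion contributes $g+(2\alpha_\gamma-1)\sigma^2=\lambda'(\alpha_\gamma)$ and the jumps contribute $(\gamma+1)r(y)\,\E_{\widehat\kappa}[\log\Theta]=r(y)\psi'(\alpha_\gamma)$, so the infinitesimal drift of $\log Y$ at $y$ equals $b(y):=\lambda'(\alpha_\gamma)+\psi'(\alpha_\gamma)r(y)=\partial_\alpha\Lambda(\alpha_\gamma,y)$. One checks that \eqref{star1} reads exactly $b(y)\le-\eta$ for $y\ge x_0(\alpha_\gamma)$ and \eqref{star2} reads $b(y)\ge\eta$ for $y\le x_0(\alpha_\gamma)$ (using $|\alpha_\gamma|=-\alpha_\gamma$). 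Thus \eqref{star1} forces $\log Y$ down at high values and \eqref{star2} forces it up at low values. I would upgrade this into uniform tightness via a Foster--Lyapunov argument built on the identity $\mathcal B(y^s)=\big(\Lambda(\alpha_\gamma+s,y)-\Lambda_\gamma\big)y^s$ for the auxiliary generator $\mathcal B$, which gives $\mathcal B(y^s)/y^s=\int_{\alpha_\gamma}^{\alpha_\gamma+s}\partial_\alpha\Lambda(\alpha,y)\,\ud\alpha$. For \ref{point2-propbetar-q} and small $s>0$ this stays $\le-\tfrac\eta2 s$ for $y\ge x_0$, yielding $\sup_t\mathbf E_x[Y_t^{s}]<\infty$ and $\sup_t\mathbf P_x(Y_t\ge M)\to0$ as $M\to\infty$; then $\mathbf E_x[Y_t^{-\alpha_\gamma}\mathbf 1_{\{Y_t\ge K\}}]\le K^{-\alpha_\gamma}$ and $\mathbf E_x[Y_t^{-\alpha_\gamma}]\ge M^{-\alpha_\gamma}\mathbf P_x(Y_t\le M)\ge\tfrac12M^{-\alpha_\gamma}$ (for a fixed large $M$) give $\limsup_t(\cdot)\le 2(M/K)^{\alpha_\gamma}\le\eps$ once $K\ge K_{\eps,x}$. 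Part \ref{point3-propbetar-q} is the mirror image: \eqref{star2} yields tightness of $Y$ away from $0$, and the same splitting with the weight $Y_t^{-\alpha_\gamma}=Y_t^{|\alpha_\gamma|}$ and $K$ small gives the claim.

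The main obstacle is precisely this tightness step, and it is asymmetric between the two parts. In \ref{point2-propbetar-q} the single power $y^s$ is a genuine Lyapunov function because the jumps are downward and their rate $(\gamma+1)r(y)$ grows, so both push $\mathcal B(y^s)$ down at infinity. In \ref{point3-propbetar-q} the naive choice $y^{-s}$ fails: it has the desired negative drift near $0$, but at infinity the growing downward jump rate makes $\mathcal B(y^{-s})\sim\mathsf{const}\cdot y^{1-s}\to+\infty$, breaking the drift condition. I would remedy this with the two-sided function $V(y)=y^{-s}+y^{s'}$ ($s,s'>0$ small, with $\alpha_\gamma-s>\alpha_0$ so that $\E_{\widehat\kappa}[\Theta^{-s}]<\infty$): near $0$ the $y^{-s}$ term gives $\mathcal B V\le-\tfrac\eta2 s\,V$ by \eqref{star2}, while at infinity the $y^{s'}$ term dominates and, since $\psi(\alpha_\gamma+s')<\gamma$, satisfies $\mathcal B(y^{s'})\sim-\mathsf{const}\cdot y^{1+s'}$, overriding the positive contribution of $y^{-s}$; on compacts $\mathcal B V$ is bounded. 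This yields $\sup_t\mathbf E_x[V(Y_t)]<\infty$ and the required $\sup_t\mathbf P_x(Y_t\le m)\to0$ as $m\to0$. The remaining points are routine: justifying $m_t(x^{\alpha_\gamma})=x^{\alpha_\gamma}e^{\Lambda_\gamma t}$ by truncation (or via the local martingale of Lemma \ref{prop:martingale-co}), and verifying $\partial_\alpha\Lambda(\alpha,y)\le-\eta/2$ uniformly in $y$ over the relevant range, which uses $\psi'<0$ together with the lower bound on $r$ that \eqref{star1}--\eqref{star2} themselves encode.
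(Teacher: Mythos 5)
Your skeleton (the eigenfunction $x^{\alpha_\gamma}$ of the first-moment generator, the weighted Many-to-One formula of \cite[Lemma 3.3]{cloez2017limit}, and the reduction of every ratio to $\E_x[\mathbf{1}_A(\mathcal Y_t)\mathcal Y_t^{-\alpha_\gamma}]/\E_x[\mathcal Y_t^{-\alpha_\gamma}]$) is exactly the paper's, and your part \ref{point1-propbetar-q} is the paper's argument in different packaging: your Gr\"onwall bound $\E_{\delta_x}[N_t]\geq e^{\mathfrak c t}$ is the same inequality as the paper's lower bound $\E_x[\mathcal Y_t^{-\alpha_\gamma}]\geq x^{-\alpha_\gamma}e^{(\mathfrak c-\lambda)t}$. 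Part \ref{point2-propbetar-q} is where you genuinely diverge. The paper converts the log-drift condition \eqref{star1} into a finite expected hitting time $\E_{y_0}[\uptau^-(x_0)]<\infty$, invokes the regenerative limit theorem \cite[Theorem 7.1.4]{BN} to get convergence in law of $\mathcal Y_t$, rules out an atom at $+\infty$ via \cite[Theorem 4.1i)]{companion}, and concludes by Fatou. You instead prove the uniform-in-time moment bound $\sup_t\E_x[Y_t^{s}]<\infty$ by a Foster--Lyapunov inequality; this works, and for the reason you indicate: \eqref{star1} provides the lower bound $2\E[\Theta^{\alpha_\gamma}\ln(1/\Theta)]\,r(y)\geq g-\sigma^2(1-2\alpha_\gamma)+\eta$ for $y\geq x_0$, so the second-order error in $\int_{\alpha_\gamma}^{\alpha_\gamma+s}\partial_\alpha\Lambda(\alpha,y)\,\ud\alpha$, which is itself proportional to $r(y)$, is absorbed by the leading negative term once $s$ is small. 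This buys a self-contained, quantitative proof that bypasses both the renewal theorem and the companion paper's no-explosion result.

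Part \ref{point3-propbetar-q}, however, has a genuine gap, located precisely at the obstacle you flagged. The step ``$\mathcal B(y^{s'})\sim-\mathrm{const}\cdot y^{1+s'}$ at infinity'' presumes $r(y)\asymp y$; that is true in the paper's running example, but Assumption \ref{ass:betar-q} constrains $r$ only through $\gamma r-q=\mathfrak c$ with $q\geq0$, and \eqref{star2} constrains $r$ only on $[0,x_0]$. Take $r\equiv\rho>0$, $q\equiv\gamma\rho$ (so $\mathfrak c=0$), $\gamma>1$, and $g$ large enough that \eqref{star2} holds; this is a legitimate instance of \ref{ass:betar-q}. All quantities being constant, \eqref{star2} then says that the log-drift $b= g+(2\alpha_\gamma-1)\sigma^2+2\E[\Theta^{\alpha_\gamma}\ln\Theta]\,\rho$ satisfies $b\geq\eta$ \emph{everywhere}, so $Y_t\to\infty$ and, by Jensen, $\E_x[Y_t^{s'}]\geq x^{s'}e^{s'\eta t}\to\infty$ for every $s'>0$. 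Hence $\sup_t\E_x[V(Y_t)]=\infty$ for your $V(y)=y^{-s}+y^{s'}$, and the inequality $\mathcal B V\leq -cV+d$ cannot hold: for $\alpha\geq\alpha_\gamma$ one has $\partial_\alpha\Lambda(\alpha,y)\geq b\geq\eta$, so at infinity $\mathcal B V\geq s'\eta\,y^{s'}-sb\,y^{-s}\to+\infty$.

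The tightness you actually need, $\sup_t\P_x(Y_t\leq m)\to0$ as $m\to0$, is still true in this example (mass escapes upward; Jensen applied to $\ln Y_t$ gives $\liminf_t\E_x[Y_t^{|\alpha_\gamma|}]>0$ directly), so the proposition is not contradicted; but your argument does not produce it, and the two regimes require different remedies: when $r$ is large at infinity your two-sided $V$ works, when $r$ is small at infinity the upward geometric drift $\lambda'(\alpha_\gamma)\geq\eta$ forced by \eqref{star2} destroys any Lyapunov function containing a growing term, while a truncated choice such as $(y\wedge1)^{-s}$ fails in the opposite regime because the growing jump rate re-injects mass from large $y$ towards $0$. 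Under \ref{ass:betar-q} alone, $r$ may even oscillate between these two behaviours along a sequence tending to infinity, so no single function of your proposed form settles the matter. This is exactly where the paper switches tools: finite expected upcrossing time of $x_0$ from below, the regenerative theorem \cite[Theorem 7.1.4]{BN}, the no-atom-at-zero result \cite[Theorem 3.3i)]{companion}, and Fatou applied to the bounded continuous function $(y\wedge1)^{-\alpha_\gamma}$.
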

This result shows the importance of noise in the process dynamics for the long-term behaviour of the infection. It implies that if the cells induce more fluctuations in the dynamics of the parasites while maintaining the same rate of growth for the parasites, the cell population may contain the infection.
The first point of Proposition \ref{prop:betar-q} corresponds to the case of a growing cell population ($r(x)-q(x)\geq 0$, if $\mathfrak{c}\geq 0$), with a moderate growth of the parasites compared to the noise of the environment $(g<\sigma^2)$. The condition $\alpha_\gamma<1-{g}/{\sigma^2}$ links the growth of the population and the growth of the parasites: if ${g}/{\sigma^2}$ is small, there is no additional restriction on the growth of the cell population (except that $\gamma\in[0,1)$), but if ${g}/{\sigma^2}$ is close to $1$, then $\al{\gamma}$ has to be close to $1$ for the condition to be fulfilled, which corresponds to a growth of the cell population independent of the quantity of parasites ($(r-q)(x)$ close to $ \mathfrak{c}$).
\cha{Notice again that if there exists $c > 0$ such that $r(x) - q(x) > c$ for all $x > 0$, we can prove that the probability of survival of the
cell population is positive. This is in particular the case in points \textit{(i)-(ii)} as soon as $\mathfrak{c}> 0$. }
\begin{rem}
The case $\gamma=1$ ($\alpha_\gamma=0$) corresponds to a constant $r-q$, and is studied in details and in more generality in \cite{marguet2023parasite}. In our case, if $r$ and $q$ are constant, for the second and third points, it amounts to look at the sign of
$$ g- \sigma^2- 2r\E\left[\ln(1/\Theta)\right] ,$$
and completes \cite[Corollary 16]{BPS} for the case of Geometric Brownian motion. 
\end{rem}
\cha{{\bf Running example.} 
Let us illustrate the conditions of Proposition \ref{prop:betar-q} using the running example \eqref{Feller_diff_jumps}-\eqref{eq:def_example_cells}.
To simplify the presentation, we will distinguish the cases $\alpha>0$ and $\alpha=0$, which correspond to very different situations. Indeed, the cell division rate depends on the intracellular quantity of parasites only when $\alpha>0$.
\begin{itemize}
\item[$\bullet$] $\alpha>0$; {\bf{LGBE}} $\Leftrightarrow$ $\beta_\sigma=0$ and $\gamma \alpha = \alpha_q$.
\begin{itemize}
\item[$\bullet$] if $\gamma\in [0,1)$ (corresponding to $\alpha_q< \alpha$)
\begin{itemize}
\item  if $\alpha_\sigma\ln(1 +\gamma)>\alpha_g\ln 2$, Proposition \ref{prop:betar-q}\ref{point1-propbetar-q} applies: the proportion of infected cells tends to $0$ for any level of infection.
\item if $\alpha_\sigma\ln(1 +\gamma)\leq \alpha_g\ln 2$, the parasites spread quickly, but condition \eqref{star1} always holds: the cell population contains the infection (Proposition \ref{prop:betar-q}\ref{point2-propbetar-q}). 
\end{itemize} 
\item[$\bullet$] if $\gamma>1$ (corresponding to $\alpha_q> \alpha$), Proposition \ref{prop:betar-q}\ref{point3-propbetar-q} applies if
$$
\alpha_g> \alpha_\sigma\left(2\frac{\ln(1 + \gamma)}{\ln 2}-1\right) + \beta(1+ \gamma) \ln 2;
$$
hence the parasite growth outweighs other mechanisms, and cells typically contain a large quantity of parasites in the long run.
\end{itemize}
\item[$\bullet$] $\alpha =0$; {\bf{LGBE}} $\Leftrightarrow$ $\beta_\sigma=\alpha_q=0$. 
\al{\begin{itemize}
\item if $\alpha_\sigma >\alpha_g$, Proposition \ref{prop:betar-q}\ref{point1-propbetar-q} applies,
\item if $\alpha_\sigma\leq \alpha_g< \alpha_\sigma  +   2\ln 2\beta$, Proposition \ref{prop:betar-q}\ref{point2-propbetar-q} applies, 
\item if $\alpha_g> \alpha_\sigma + 2\beta \ln 2$, Proposition \ref{prop:betar-q}\ref{point3-propbetar-q} applies.
\end{itemize}}
\end{itemize}}
\cha{Notice that} \al{if $\alpha=0$, and $\alpha_\sigma=0$, the last condition reads $\alpha_g> 2\beta \ln 2$, which is the condition for Assumption \ref{ass_expl_tps_fini} to hold, and Proposition \ref{prop_constant_diff}{\it i)} applies.
}
\subsection{Competition between division and parasites growth}\label{sec:competition}
We consider another case where the mean growth of parasites is linked with the cell death rate (see Assumption \ref{ass:gx-q} below). It may be the case when parasites kill the cell to be extruded in the cell medium. 
\begin{customass}{\bf{PGCD}}{(Parasite Growth, Cell Death)}\label{ass:gx-q}
We assume that
\begin{itemize}
\item There are no stable jumps in the parasites dynamic ($c_\mathfrak{b}=0$).
\item There exists \al{$\mathfrak{c}\in\R$} such that $g(x)/x -q(x) = \mathfrak{c}$ for all $x\geq 0$. 
\end{itemize}
\end{customass}
To state the next results, we introduce generalized conditions from \cite{companion} under which the quantity of parasites 
reach the state $\infty$ ({\bf{(SN$\infty$)}} for strong noise at $\infty$). Recall the definition of $C_a$ and $I_a$ in \eqref{eq:Ia}. 
\begin{enumerate}[label=\bf{(SN$\infty$)}]
\item\label{eq:SNinfinity-1}  There exist $a<1$ and $f:\R_+\to \R_+$ such that
\begin{equation*}
\frac{g(u)}{u}- a\frac{\sigma^2(u)}{u^2} +u^{-\brho}C_a-r(u)\frac{1-\E[\Theta^{1-a}]}{1-a} -p(u)I_a(u) = -f(u)+o(\ln(u)), \quad u \to \infty.
\end{equation*}
\end{enumerate}
Under Assumption \ref{ass:gx-q}, $C_a=0$, but we will use later the condition including $C_a$.
\begin{prop} \label{prop:gx-q} Let $\mathfrak{c}\in\R$ be such that Assumption \ref{ass:gx-q} holds. Then,
\begin{enumerate}[label=\roman*)]
\item\label{point1-propgx-q} if 
\begin{equation} \label{add_ass} \limsup_{x \to \infty}\left\{\frac{\sigma^2(x)}{x^4}+ \frac{r(x)}{x^2}+\frac{p(x)}{x^3} \right\}<\infty,
 \end{equation} 
and if there exists $x_0> 0$, $\eta_0>0$ such that
\begin{equation} \label{star4}
\frac{g(x)}{x} - \frac{\sigma^2(x)}{x^2} - \frac{p(x)}{2x^2}\int_{\R_+}\frac{z^2}{1+z/x}\pi(dz) - r(x)\left(\E\left[1/\Theta\right]-1/2\right) \geq \eta_0,\quad \forall x\leq x_0,
\end{equation}
\begin{equation}\label{eq:condinfty}
\frac{g(x)}{x} + \frac{\sigma^2(x)}{x^2} + \frac{p(x)}{x^2}\int_{\R_+}z^2\pi(dz) + 2r(x)\left(\E\left[\Theta^2\right]-1/2\right) \leq -\eta_0,\quad \forall x\geq 1/x_0,
\end{equation}
then for all $\eps,x>0$ there exists $K_{\eps,x}$ such that $K\leq K_{\eps,x}$ implies
\begin{align*}
\limsup_{t \to \infty}\frac{\E_{\delta_x}\left[\sum_{u\in V_t} \mathbf{1}_{\lbrace X_t^u\leq K\rbrace}\right]}{\E_{\delta_x}\left[N_t\right]}\leq \eps.
\end{align*}
\item\label{point2-propgx-q} if \ref{eq:SNinfinity-1} holds and if there exist $x_0\geq 0$, $\eta>0$ such that
\begin{equation} \label{star3}
\frac{g(x)}{x} + \frac{\sigma^2(x)}{x^2} + p(x)I_0(x) + 2\E\left[\Theta\ln\Theta\right]r(x) \leq -\eta,\quad \forall x\geq x_0,
\end{equation}
%$$r(x)\geq \frac{g(x)}{x}+\eta,$$
 then for all $\eps,x>0$ there exists $K_{\eps,x}$ such that $K\geq K_{\eps,x}$ implies
\begin{align*}
\limsup_{t \to \infty}\frac{\E_{\delta_x}\left[\sum_{u\in V_t} \mathbf{1}_{\lbrace X_t^u\geq K\rbrace}\right]}{\E_{\delta_x}\left[N_t\right]}\leq \eps.
\end{align*}
\end{enumerate}
\end{prop}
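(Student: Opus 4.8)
The plan is to pass from the population to a single spine process and to convert every pointwise hypothesis into a drift estimate for that spine. The starting point is the first-moment (mean) generator of the branching dynamics: using the symmetry of $\kappa$, for smooth $f$ it reads
\[ \mathcal{A}f(x) = \mathcal{G}f(x) + 2r(x)\,\E[f(\Theta x)] - (r(x)+q(x))\,f(x). \]
A direct computation with $f(x)=x^{1-a}$, expanding the jump terms exactly as in \eqref{eq:Ia}, gives the pivotal identity (the infinitesimal form of the local martingale of Lemma \ref{prop:martingale-co})
\[ \mathcal{A}\big(x^{1-a}\big) = x^{1-a}\big(r(x)-q(x)-G_a(x)\big), \]
with $G_a$ as in \eqref{eq:Ga}; this is the computational heart of the argument, since it turns a bound on $G_a$ into a super-/sub-eigenfunction property of the power $x^{1-a}$. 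Through the Many-to-One formula of Section \ref{sec:MTO}, equivalently the Feynman--Kac representation $\E_{\delta_x}[\sum_{u\in V_t}f(X_t^u)] = \E_x[f(Y_t)\exp(\int_0^t (r-q)(Y_s)\,ds)]$ with $Y$ the spine (generator $\mathcal{G}$ plus jumps at rate $2r(x)$ to $\Theta x$) and potential $r-q$, the two ratios in the statement become tilted expectations for $Y$, and each part reduces to showing that the tilted law of $Y_t$ does not charge a neighbourhood of $0$ (part \ref{point1-propgx-q}) or of $+\infty$ (part \ref{point2-propgx-q}).

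For part \ref{point1-propgx-q} I would use a Lyapunov function that blows up at the origin, namely $V(x)=x^{-1}$ near $0$ (the choice $a=2$, which is why $\E[\Theta^{1-a}]=\E[1/\Theta]$ appears). Using the elementary bound $I_2(x)\le x^{-2}\int z^2(1+z/x)^{-1}\pi(dz)$, the identity above shows that \eqref{star4} is exactly the statement $\mathcal{A}V\le -c\,V$ for $x\le x_0$, i.e. a confining drift that pushes the spine away from $0$. To turn this local estimate into a global Lyapunov inequality I would graft onto $V$ a term large at infinity, $x^2$ (the choice $a=-1$); then \eqref{eq:condinfty} gives $\mathcal{A}(x^2)\le -c\,x^2$ for $x\ge 1/x_0$, while the polynomial bounds \eqref{add_ass} guarantee that the unwanted contribution $\mathcal{A}(x^{-1})$, of order $r(x)/x=O(x)$ at infinity, is dominated by $c\,x^2$. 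This yields $\mathcal{A}V\le -c\,V + C\mathbf{1}_{[x_0,1/x_0]}$ on all of $\R_+$. A Gr\"onwall estimate for the mean semigroup then bounds $\E_{\delta_x}[\langle Z_t,V\rangle]/\E_{\delta_x}[N_t]$ uniformly in $t$, and since $\mathbf{1}_{\{x\le K\}}\le K\,V(x)$ the proportion of cells carrying at most $K$ parasites is $O(K)$, hence $\le\eps$ as soon as $K\le K_{\eps,x}$.

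For part \ref{point2-propgx-q} the relevant spine is the one size-biased by $h(x)=x$, i.e. the case $a=0$; this is the source of the index $I_0$ and of the fragmentation cost $2\,\E[\Theta\ln\Theta]\,r(x)$ in \eqref{star3}. I would verify by It\^o's formula that, under this biased dynamics, the drift of $\log Y_t$ equals precisely the left-hand side of \eqref{star3}: the diffusive part contributes $g(x)/x+\sigma^2(x)/x^2$, the $\pi$-jumps contribute $p(x)I_0(x)$, and at each division the spine follows the daughter $\Theta x$ with probability $\Theta$, producing $2\,\E[\Theta\ln\Theta]\,r(x)$. Thus \eqref{star3} asserts that $\log Y_t$ has drift $\le -\eta<0$ for $Y_t\ge x_0$. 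I would then import \ref{eq:SNinfinity-1} from \cite{companion} to control the behaviour of the auxiliary process at $+\infty$ (the strong-noise property); combined with the negative logarithmic drift, this yields tightness of $(Y_t)_{t\ge0}$ away from $+\infty$, and the long-time results of \cite{companion} for the non-linear branching auxiliary process give $\limsup_t\tilde{\P}(Y_t\ge K)\le\eps$ for $K$ large. Unwinding the Many-to-One identity produces the stated bound on the proportion of heavily infected cells.

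The main obstacle is that the mean semigroup is neither conservative nor, in this regime, time-homogeneous: the Lyapunov inequality controls $P_tV:=\E_{\delta_x}[\langle Z_t,V\rangle]$ only relative to the a priori unknown growth rate of $P_t1=\E_{\delta_x}[N_t]$, so the delicate point in part \ref{point1-propgx-q} is to divide by $\E_{\delta_x}[N_t]$ legitimately, which forces one to quantify the confinement rate $c$ against that growth rate. In part \ref{point2-propgx-q} the analogous difficulty is concentrated in the boundary analysis at $+\infty$: transferring tightness of the biased spine back to the population proportion is exactly where the weighted-spine construction and the asymptotic theorems of \cite{companion,marguet2023parasite} are indispensable, and where condition \ref{eq:SNinfinity-1} does the essential work.
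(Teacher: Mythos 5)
Your overall architecture is the paper's: under Assumption \ref{ass:gx-q} the function $V_1(x)=x$ is an exact eigenfunction of the mean generator $\mathcal{B}$ of \eqref{defmathcalB} with eigenvalue $\mathfrak{c}$, and the weighted Many-to-One formula \eqref{eq:mtoV} (from \cite{cloez2017limit}) rewrites both numerator and denominator through the size-biased spine $\mathcal{Y}$ with generator $\mathcal{A}_{V_1}$. Your choices $a=2$, $a=-1$, $a=0$ correspond exactly to the paper's three drift computations (for $\mathcal{Y}_t^{-2}$, $\mathcal{Y}_t$ and $\ln\mathcal{Y}_t$), and your part \ref{point2-propgx-q} (negative logarithmic drift from \eqref{star3}, then recurrence plus \ref{eq:SNinfinity-1} and \cite{companion} to forbid mass at $+\infty$, the paper invoking in addition \cite{BN} for the convergence in law) is essentially the paper's proof.

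The genuine gap is in part \ref{point1-propgx-q}, and you flag it yourself without closing it: the division by $\E_{\delta_x}[N_t]$. Two points. First, a normalization slip: \eqref{star4} does not give $\mathcal{B}(x^{-1})\le -c\,x^{-1}$ but $\mathcal{B}(x^{-1})\le(\mathfrak{c}-2\eta_0)\,x^{-1}$ for $x\le x_0$; the decay is relative to the eigenvalue $\mathfrak{c}$, which may be large and positive, so one must work with the renormalized semigroup $e^{-\mathfrak{c}t}M_t$ throughout. Second, and more importantly, the Gr\"onwall estimate obtained from your Lyapunov inequality only bounds $e^{-\mathfrak{c}t}\E_{\delta_x}[\langle Z_t,V\rangle]$ uniformly in $t$; it says nothing about $\E_{\delta_x}[N_t]$, so the assertion that Gr\"onwall "bounds the ratio uniformly in $t$" is unjustified as written. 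The paper closes exactly this hole in one line: by \eqref{eq:mtoV}, $e^{-\mathfrak{c}t}\E_{\delta_x}[N_t]=x\,\E_x[\mathcal{Y}_t^{-1}]$, and Jensen's inequality gives $\E_x[\mathcal{Y}_t^{-1}]\ge 1/\E_x[\mathcal{Y}_t]$, where $\sup_{t\ge 0}\E_x[\mathcal{Y}_t]<\infty$ is precisely what the Lyapunov bound coming from \eqref{eq:condinfty} delivers; this is the real purpose of \eqref{eq:condinfty}, not merely the "grafting" of $x^2$ onto $V$ at infinity. In your population-level language the same step reads: by Cauchy--Schwarz,
\begin{equation*}
x e^{\mathfrak{c}t}=\E_{\delta_x}\Big[\sum_{u\in V_t}X_t^u\Big]\le\Big(\E_{\delta_x}\Big[\sum_{u\in V_t}(X_t^u)^2\Big]\Big)^{1/2}\big(\E_{\delta_x}[N_t]\big)^{1/2},
\end{equation*}
so your uniform bound $e^{-\mathfrak{c}t}\E_{\delta_x}\big[\sum_{u\in V_t}(X_t^u)^2\big]\le C$ forces $\E_{\delta_x}[N_t]\ge (x^2/C)\,e^{\mathfrak{c}t}$, after which the ratio is $O(K)$ as you claim. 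Note also that the difficulty is not, as you put it, to "quantify the confinement rate against the a priori unknown growth rate": since $x$ is an exact eigenfunction, every quantity is measured against the explicit rate $e^{\mathfrak{c}t}$, and what is missing is only this Jensen/Cauchy--Schwarz conversion of the second-moment bound into a lower bound for $\E_{\delta_x}[N_t]$. With that line added, your argument for part \ref{point1-propgx-q} becomes complete and equivalent to the paper's.
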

\begin{rem}
 The class of processes considered here includes, in particular, the case $q(\cdot) \equiv q$, $g(x)=g x$ studied in previous works. Furthermore, assumptions on the cell division rate $r(\cdot)$, and on the noise $\sigma(\cdot)$ in the parasite dynamics are only on their local behaviour (around $0$ and $\infty$). This constitutes a strong generalisation of previous results.
\end{rem}
\begin{rem}
The results of Proposition \ref{prop:gx-q} are weaker than the ones of Proposition \ref{prop_constant_diff} as they concern the normalized mean numbers of cells with given amounts of parasites, and not the proportions of such cells in the population. 
But the assumptions of Proposition \ref{prop:gx-q} are much more flexible than the assumptions of Proposition \ref{prop_constant_diff}: they are in some way their local versions.  
\end{rem}
A consequence of Proposition \ref{prop:gx-q} is that if the cells are able to increase their rate of division above an explicit threshold, this is sufficient to contain the infection. In the same direction, if the parasites manage to slow down the rate of cell division sufficiently, this gives them enough time to multiply and be numerous in a large proportion of the cells.\\

\al{{\bf Running example.} We consider again the case of parasites following \eqref{Feller_diff_jumps} with cell population dynamic given by \eqref{eq:def_example_cells}. Assumption \ref{ass:gx-q} holds if $\alpha_q = 0$. In this case (see \cite[Section 2.2.3]{marguet2016uniform}) 
$$
\E_{\delta_x}[N_t]=e^{(\beta-\beta_q)t} + \frac{\alpha x}{\alpha_g-\beta}\left(e^{(\alpha_g-\beta_q)t} - e^{(\beta-\beta_q)t}\right).
$$
\begin{itemize}
\item if $\alpha>0$, 
\begin{itemize}
\item \ref{eq:SNinfinity-1} and \eqref{star3} hold so that Proposition \ref{prop:gx-q}\ref{point2-propgx-q} applies. In particular, if $\beta_\sigma=0$, we retrieve the result of Proposition \ref{prop:betar-q}\ref{point2-propbetar-q} in the case $\gamma=0$;
\item \eqref{add_ass} and \eqref{eq:condinfty} hold for every set of parameters and \eqref{star4} holds if $\beta_\sigma=0$ and $\alpha_g >\alpha_\sigma +3\beta/2$. In this case, Proposition \ref{prop:gx-q}\ref{point1-propgx-q} applies.
\end{itemize} 
Therefore, if \cha{$\beta_\sigma=0$ and} $\alpha_g >\alpha_\sigma +3\beta/2$, there are no cells with very small or very large amount of parasites in the longtime asymptotic.

Proposition \ref{prop:gx-q} states that if $\alpha>0$ and $\alpha_q=0$, we might have a basal level of infection (Proposition \ref{prop:gx-q}\ref{point1-propgx-q}) if the growth of the parasites is sufficiently strong ($\alpha_g >\alpha_\sigma +3\beta/2$) whereas using Proposition \ref{prop:betar-q}\ref{point2-propbetar-q}, for $\alpha>0$ and $\alpha_q=0$, we could only give condition leading to a moderate infection.
\item if $\alpha=0$, \eqref{star4} and \eqref{eq:condinfty} cannot hold simultaneously but as \ref{eq:SNinfinity-1} always holds and \eqref{star3} holds if $\alpha_g<\beta\ln 2-\alpha_\sigma$, in this case Proposition \ref{prop:gx-q}\ref{point2-propgx-q} applies and the proportion of very infected cells goes to $0$.
\end{itemize} }

\al{By applying Propositions \ref{prop_constant_diff}, \ref{prop:betar-q}, \ref{prop:gx-q} to the running example, we exhibited cases where one result is a generalization of a result above. But this is not always the case.
\cha{We can find cases which satisfy the conditions of only one of the propositions. To illustrate this point, we will focus on the case $\alpha>0$ and on the property \eqref{res_run_2}.}
To prove that \eqref{res_run_1} implies \eqref{res_run_2} in case ex-\ref{prop_constant_diff}ii) below, we refer to the adaptation of Lemma \ref{lem_esp_Nt2} (see p.\pageref{par:adaptationNt}) and to the inequality
$$ \frac{\E_{\delta_x}\left[\sum_{u\in V_t} \mathbf{1}_{\lbrace X_t^u\geq K\rbrace}\right]}{\E_{\delta_x}\left[N_t\right]}
\leq \sqrt{\E_{\delta_x}\left[\frac{\sum_{u\in V_t} \mathbf{1}_{\lbrace X_t^u\geq K\rbrace}}{N_t}\right]\E_{\delta_x}\left[\frac{N^2_t}{\E^2_{\delta_x}\left[N_t\right]}\right]}.$$ }

\al{
Let us consider the following conditions: for $\gamma'>\gamma>0$ and $a<1$,
\begin{itemize}
\item[ex-\ref{prop_constant_diff}ii):\hspace{.23cm}] $ \alpha_q\geq \alpha>0, \ \beta_\sigma>0, \ \beta_q +\gamma > \beta \geq \max\left(\left(\frac{1-a}{2-2^a} \left[\frac{\gamma'}{1-a}+ \alpha_g  -  a \alpha_\sigma \right]\right), \alpha_g, \beta_q\right)$.
\item[ex-\ref{prop:betar-q}i-ii):] $\alpha > \alpha_q>0,\ \beta_\sigma=0, \ \alpha_g<\beta_q,$
\item[ex-\ref{prop:gx-q}ii):\hspace{.23cm}] $\alpha>0$, $\alpha_q=0$.
\end{itemize}
We can check that each set of parameters satisfies the conditions of the proposition indicated at the beginning of the line, and does not satisfy the conditions of any other result.
Therefore, the different results are not redundant but complementary.}\\

\subsection{Linear division rate, constant death rate}\label{sec:LDCD}

In this section, we study in details the case of a linear division rate. Let us first state the general assumptions of the section.

\begin{customass}{\bf{LDCG}}{(Linear Division, Constant Growth)} \label{ass_E}
We assume that
\begin{itemize}
\item There are no stable jumps ($c_\mathfrak{b}=0$).
\item There exist $\alpha,\beta>0$, $\ g,q \geq 0$ such that $\max(g,\beta)>q$ and
$$g(x) = gx,\ q(x) \equiv q,\ r(x) = \alpha x+\beta .$$
\item $p$ is differentiable and for all $x\geq 0$, $xp'(x)\geq p(x)$, and $\al{\int_{\mathbb{R}_+}} (z^2\wedge z^3)\pi(dz)<\infty$.
\end{itemize}
\end{customass}
We first make three remarks on this assumption: 
\begin{itemize}
\item[-]If the division rate is linear in the quantity of parasites, the mean number of cells at time $t$ depends on the expectation of the total quantity of parasites in the population, which is infinite if we consider stable jumps.
\item[-]In Lemma \ref{lem_esp_Nt2}, we will see that the quantity $\max(g,\beta)-q$ is the Malthusian growth rate of the population. Therefore, we only consider the case of a growing population. 
\item[-] The last point is needed for the existence of an auxiliary process (see Prop. \ref{prop_sol_SDE_auxi}).
\end{itemize}
A linear increase of the division rate in order to get rid of the parasites is an efficient strategy for the cell population, as stated in the next proposition. Recall that $\mathcal{G}$ has been defined in \eqref{def_gene}. To get stronger convergence results, we also consider additional conditions to control the noise in the dynamic of the parasites for large values.
\begin{customass}{\bf{LDCG+}} \label{ass_E1}
We assume that Assumption \ref{ass_E} holds and that
\begin{equation}\label{eq:limsup_sigma}
\limsup_{x \to \infty} \frac{\sigma^2(x)}{x^2}<\infty\quad\text{and }\quad \limsup_{x \to \infty} \frac{p(x)}{x^2}<\infty.
\end{equation}
\end{customass}
\begin{prop}\label{prop:containment}
For all $x\geq 0$:
\begin{enumerate}
\item[i)] Under Assumption \ref{ass_E}, for all $a >0$,
$$ \frac{\E_{\delta_x}[\#\lbrace u\in V_t: X_t^u \in [a,a+da]\rbrace]}{\E_{\delta_x}[N_t]} = \frac{\P_x(\mathcal{Y}_t \in [a,a+da])}{a\E_x[\mathcal{Y}_t^{-1}]}, $$
where $\mathcal{Y}$ has infinitesimal generator
\begin{align} \nonumber\label{infgen_mathcalY} \mathcal{A}f(x)= & \mathcal{G}f(x) + p(x)\int \left(f(x+z)-f(x)-zf'(x)\right)\frac{z}{x}\pi(dz) \\
&+\left(2\frac{\sigma^2(x)}{x} +\frac{p(x)}{x}\int z^2\pi(dz)\right)f'(x)  + 2r(x)\left[\int_0^1\theta \left(f(\theta x)-f(x)\right) \kappa(d\theta)\right].
\end{align}
\item[ii)] Under Assumption \ref{ass_E1}, for all $\varepsilon>0$,
\begin{align*} 
%\label{point2-prop:containment}
\lim_{K\rightarrow \infty}\lim_{t\rightarrow \infty} \mathbb{P}_{\delta_x}
\left( \mathbf{1}_{\{N_t \geq 1\}} \frac{\#\lbrace u\in V_t: X_t^u >K\rbrace}{N_t}>\eps \right) = 0.
\end{align*}
\end{enumerate}
\end{prop}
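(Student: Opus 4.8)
The plan is to handle the two points with different but linked tools: point $i)$ is an eigenweighted Many-to-One formula, while point $ii)$ upgrades the first-moment information of $i)$ to a statement on empirical proportions by coupling it with a second-moment control of $N_t$. The key structural fact, valid under Assumption \ref{ass_E}, is that $h(x)=x$ is an eigenfunction of the first-moment generator
\begin{equation*}
\mathcal{L}f(x)=\mathcal{G}f(x)+r(x)\Big(2\int_0^1 f(\theta x)\kappa(d\theta)-f(x)\Big)-q\,f(x),
\end{equation*}
with eigenvalue $g-q$: since parasites are conserved at division ($\theta x+(1-\theta)x=x$), the drift is linear and there are no stable jumps ($c_\brho=0$), every jump and fragmentation contribution to $\mathcal{L}h$ cancels, leaving $\mathcal{L}h=(g-q)h$. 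Note this is the \emph{parasite} eigenvalue, distinct from the Malthusian rate $\max(g,\beta)-q$ governing $N_t$.

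For point $i)$, I would invoke the eigenweighted Many-to-One formula of Section \ref{sec:MTO} with weight $h(x)=x$ to get, for nonnegative $f$,
\begin{equation*}
\E_{\delta_x}\Big[\sum_{u\in V_t}f(X_t^u)\Big]=e^{(g-q)t}\,x\,\E_x\!\left[\frac{f(\mathcal{Y}_t)}{\mathcal{Y}_t}\right],
\end{equation*}
where $\mathcal{Y}$ is the $h$-biased auxiliary process with generator $\mathcal{A}f=h^{-1}(\mathcal{L}-(g-q))(hf)$. Expanding $\mathcal{G}(xf)$, using $\int z\big(f(x+z)-f(x)\big)\pi(dz)=\int z\big(f(x+z)-f(x)-zf'(x)\big)\pi(dz)+f'(x)\int z^2\pi(dz)$ for the jump part and $\int_0^1\theta\,\kappa(d\theta)=1/2$ for the fragmentation part, the $h$-transform reproduces exactly the operator $\mathcal{A}$ of \eqref{infgen_mathcalY}. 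Choosing $f=\mathbf 1_{[a,a+da]}$ and $f\equiv1$ and taking the quotient cancels the prefactor $e^{(g-q)t}x$; dividing by $da$ turns $\E_x[\mathbf 1_{[a,a+da]}(\mathcal{Y}_t)/\mathcal{Y}_t]$ into $a^{-1}\P_x(\mathcal{Y}_t\in[a,a+da])$, which is the announced identity.

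For point $ii)$, I would first extract the first-moment bound by integrating $i)$ over $a>K$:
\begin{equation*}
\frac{\E_{\delta_x}[\#\{u\in V_t:X_t^u>K\}]}{\E_{\delta_x}[N_t]}=\frac{\E_x[\mathbf 1_{\{\mathcal{Y}_t>K\}}\,\mathcal{Y}_t^{-1}]}{\E_x[\mathcal{Y}_t^{-1}]}\le\frac{1}{K\,\E_x[\mathcal{Y}_t^{-1}]},
\end{equation*}
where the inequality uses $\mathbf 1_{\{\mathcal{Y}_t>K\}}\mathcal{Y}_t^{-1}\le 1/K$. Since $\E_x[\mathcal{Y}_t^{-1}]=e^{-(g-q)t}\E_{\delta_x}[N_t]/x$, the explicit expression for $\E_{\delta_x}[N_t]$ recalled before Assumption \ref{ass_E} shows $\liminf_{t\to\infty}\E_x[\mathcal{Y}_t^{-1}]>0$ in every regime $\max(g,\beta)>q$, whence $\lim_{K\to\infty}\limsup_{t\to\infty}$ of the normalised mean is $0$. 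To pass to the proportion I would use, for any $\delta>0$, the splitting
\begin{equation*}
\P_{\delta_x}\!\Big(\mathbf 1_{\{N_t\ge1\}}\tfrac{\#\{u:X_t^u>K\}}{N_t}>\eps\Big)\le\P_{\delta_x}\!\big(0<\tfrac{N_t}{\E_{\delta_x}[N_t]}<\delta\big)+\frac{\E_{\delta_x}[\#\{u:X_t^u>K\}]}{\eps\,\delta\,\E_{\delta_x}[N_t]},
\end{equation*}
the second term being killed by the bound above; letting $t\to\infty$ then $K\to\infty$ leaves $\inf_{\delta>0}\limsup_{t\to\infty}\P_{\delta_x}\big(0<N_t/\E_{\delta_x}[N_t]<\delta\big)$.

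The crux is to show this infimum is $0$, i.e. that on survival $W_t:=N_t/\E_{\delta_x}[N_t]$ does not concentrate near $0$, and this is where Assumption \ref{ass_E1} enters. The extra control \eqref{eq:limsup_sigma} on $\sigma^2(x)/x^2$ and $p(x)/x^2$ is exactly what makes the second-moment estimate of Lemma \ref{lem_esp_Nt2} give $\E_{\delta_x}[N_t^2]\le C\,\E_{\delta_x}[N_t]^2$, so that $W_t$ is bounded in $L^2$; combined with the martingale $e^{-\lambda t}\langle Z_t,\varphi\rangle$ attached to the principal eigenelement ($\varphi(x)=x$ when $g>\beta$, and $\varphi(x)=1+\tfrac{\alpha}{\beta-g}x$ when $\beta>g$, with $\lambda=\max(g,\beta)-q$), a Kesten--Stigum argument yields $W_t\to W$ a.s. and in $L^2$ with $\{W>0\}=\{\text{survival}\}$. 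Then $\P_{\delta_x}(0<W_t<\delta)\to\P_{\delta_x}(0<W<\delta)\to0$ as $t\to\infty$ and then $\delta\to0$, closing the argument. I expect the genuinely delicate point to be precisely this non-degeneracy and the second-moment bound underlying it: the second moment of $N_t$ is driven by the linear rate $r(x)=\alpha x+\beta$ evaluated along the parasite path, so its control demands taming the parasite load — the role of \eqref{eq:limsup_sigma} — and, in the regime $g>\beta$ where $N_t$ is not itself the natural martingale, transferring the non-degenerate limit from $\langle Z_t,\varphi\rangle$ to $N_t$ (via stabilisation of the empirical mean parasite load) requires additional care.
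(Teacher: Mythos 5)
Your point $i)$ is correct and is the paper's own argument: both use that $V_1(x)=x$ is an eigenfunction of the first-moment generator with eigenvalue $g-q$, invoke the weighted Many-to-One formula of \cite[Lemma 3.3]{cloez2017limit}, and identify the $h$-transformed generator with \eqref{infgen_mathcalY}.

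For point $ii)$, your reduction is legitimate (the splitting over $\{N_t\geq\delta\,\E_{\delta_x}[N_t]\}$ and its complement, plus the normalized first-moment bound, plays the same role as the paper's quantity $\mathrm{Diff}(K,t,\eps)$), but the crux --- showing $\inf_{\delta>0}\limsup_{t\to\infty}\P_{\delta_x}\bigl(0<N_t/\E_{\delta_x}[N_t]<\delta\bigr)=0$ --- is left unproven, and the Kesten--Stigum route you sketch has two missing inputs. First, the martingale you would need to control is $e^{-\lambda t}\langle Z_t,\varphi\rangle$ with $\varphi(x)=x$ (case $g>\beta$) or $\varphi(x)=1+\alpha x/(\beta-g)$ (case $\beta>g$); its $L^2$-boundedness requires second moments of $P_t=\sum_{u\in V_t}X_t^u$ and of the cross term $P_tN_t$, which depend on the parasite noise $\sigma$, $p$, $\pi$ and are computed nowhere --- Lemma \ref{lem_esp_Nt2} only controls $\E_{\delta_x}[N_t^2]$, and $L^2$-boundedness of the non-martingale $W_t=N_t/\E_{\delta_x}[N_t]$ by itself yields neither convergence nor non-concentration near $0$. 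Second, the identification $\{W>0\}=\{\text{survival}\}$ is asserted, not proved, and is delicate here precisely because the branching is not autonomous (division rates depend on the whole parasite path).

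The paper avoids both difficulties by a case distinction that your proposal misses. When $g>\max(\beta,q)$ and $\alpha>0$, Lemma \ref{lem_esp_Nt2} gives not mere boundedness but $\E_{\delta_x}[N_t^2]/\E_{\delta_x}^2[N_t]\to1$, i.e. $\mathrm{Var}_{\delta_x}(N_t)/\E_{\delta_x}^2[N_t]\to0$, so $N_t/\E_{\delta_x}[N_t]\to1$ in probability and Chebyshev (with Cauchy--Schwarz) closes the argument with no martingale theory at all. When $\beta>\max(g,q)$ --- exactly the regime where concentration fails and your Kesten--Stigum step would be indispensable --- the paper argues by contradiction, using structure specific to this statement: each cell with load $>K$ contributes at least $K$ to $P_t$, so a non-vanishing probability of a proportion $\geq\eps_0$ of such cells forces $\E_{\delta_x}[P_t]\geq K\eps_0\,\E_{\delta_x}[N_t\mathbf{1}_{A_{\eps_0,t,K}}]$, which is of order $Ke^{(\beta-q)t}$ by a comparison of $N_t$ with a supercritical birth-death process with immigration; this contradicts $\E_{\delta_x}[P_t]=xe^{(g-q)t}$ since $g<\beta$. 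To complete your proof you would either have to supply the missing second-moment estimates for $P_t$ together with the non-degeneracy-on-survival argument, or replace that step by an argument of the paper's kind.
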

Point $i)$ of Proposition \ref{prop:containment} gives information on the distribution of the quantity of parasites in the cells in large time.
Point $ii)$ extends the results of \cite{BT11} to a class of division rates increasing with 
the quantity of parasites. It is similar in spirit to \cite[Conjecture 5.2]{BT11} in the case of birth rates increasing with 
the quantity of parasites, but we relax the assumption of bounded division rates. 
Moreover, we consider positive jumps and various diffusive functions for the dynamics of the parasites, and add the possibility for the cells to die. \al{ Point {\it ii)} relies on Proposition \ref{prop:gx-q}\ref{point2-propgx-q}, but thanks to Assumption \ref{ass_E}, which is stronger than Assumption \ref{ass:gx-q}, we are able to give a more precise result on the convergence of the proportions.}\\

From this result, we see that the proportion of \al{highly} infected cells goes to $0$ as $t$ tends to infinity so that a linear division rate is sufficient to contain the infection.\\

\al{
{\bf Running example.} In the case of parasites following \eqref{Feller_diff_jumps} with cell population dynamic given by \eqref{eq:def_example_cells}, Assumption \ref{ass_E} is satisfied if \cha{$\alpha>0$}, $\alpha_q=0$ and $\max(\alpha_g, \beta)>\beta_q$. Then, according to Proposition \ref{prop:containment}ii), the proportion of very infected cells tends to $0$ in probability as time goes to infinity, which is stronger than Proposition \ref{prop:gx-q}\ref{point2-propgx-q}.\\
}

Under additional technical assumptions, we are able to establish a law of large numbers, linking asymptotically the behaviour of a typical individual at time $t$, given by the auxiliary process $Y^{(t)}$, with the behaviour of the whole population. 
\begin{customass}{\bf{LDCG++}} \label{ass_E2} We assume that Assumption \ref{ass_E1} holds and that
 \begin{equation} \label{cond_moment2_pi}
 \int_0^\infty (z \vee z^6)\pi(dz)<\infty.
\end{equation}
%\begin{equation}\label{eq:limsup_sigma}
%\limsup_{x \to \infty} \frac{\sigma^2(x)}{x^2}<\infty\quad\text{and }\quad \limsup_{x \to \infty} \frac{p(x)}{x^2}<\infty.
%\end{equation}
\end{customass}
Under this assumption, we obtain a convergence result for the branching process.
\begin{thm}\label{thm:conv_auxi}
Suppose that Assumptions \ref{ass_E2} holds.
Then, for all bounded measurable functions $F:\mathbb{D}([0,T],\mathcal{X})\rightarrow\mathbb{R}$, for all $x,y, T\geq 0$,
\begin{align*}
\E_{\delta_{x}}\left[\mathbf{1}_{\lbrace N_{t+T}\geq 1\rbrace}\left(\frac{\sum_{u\in V_{t+T}}F\left(X_{t+s}^{u},s\leq T\right)}{N_{t+T}}-
\mathbb{E}\left[F\left(Y_{t+s}^{(t+T)},s\leq T\right)\Big|Y_{0}^{(t+T)} = y \right]\right)^2\right]\underset{t\rightarrow \infty}{\longrightarrow} 0,
\end{align*}
where $Y$ is a time-inhomogeneous Markov process solution of a SDE given below in \eqref{eq:EDSauxi}.
\end{thm}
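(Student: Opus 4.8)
The plan is to run the classical second-moment argument for laws of large numbers of branching Markov processes (in the spirit of \cite{cloez2017limit,marguet2016uniform}), combining the Many-to-One formula for the first moment with a Many-to-Two formula for the fluctuations, and then closing the argument using the ergodicity of the time-inhomogeneous auxiliary process $Y^{(t+T)}$ solving \eqref{eq:EDSauxi}. Set $m_t := \E[F(Y^{(t+T)}_{t+s},s\le T)\mid Y_0^{(t+T)}=y]$ and $G^u := F(X^u_{t+s},s\le T)-m_t$, so that on $\{N_{t+T}\ge1\}$ the bracket in the statement equals $N_{t+T}^{-1}\sum_{u\in V_{t+T}}G^u$. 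Writing $W_t := N_{t+T}/\E_{\delta_x}[N_{t+T}]$ and $S_t := \E_{\delta_x}[N_{t+T}]^{-1}\sum_{u\in V_{t+T}}G^u$, one has on survival $N_{t+T}^{-1}\sum_u G^u = S_t/W_t$, so it suffices to prove
\begin{equation*}
\E_{\delta_x}\!\left[\mathbf 1_{\{N_{t+T}\ge1\}}\,\frac{S_t^2}{W_t^2}\right]\;\underset{t\to\infty}{\longrightarrow}\;0.
\end{equation*}

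First I would show $S_t\to0$ in $L^2$. The Many-to-One formula of Section \ref{sec:MTO}, in its trajectorial form underlying Proposition \ref{prop:containment} \textit{i)}, gives $\E_{\delta_x}[S_t]=\E[F(Y^{(t+T)}_{t+s},s\le T)\mid Y_0^{(t+T)}=x]-m_t$, a difference of the same spine functional started from $x$ and from $y$, which tends to $0$ by the \emph{forgetting of the initial condition} of the auxiliary process over the long interval $[0,t]$. For the variance, I would expand $\E_{\delta_x}[S_t^2]$ into a diagonal and an off-diagonal part. The diagonal is $\E_{\delta_x}[N_{t+T}]^{-2}\E_{\delta_x}[\sum_u(G^u)^2]\le 4\|F\|_\infty^2/\E_{\delta_x}[N_{t+T}]$, which vanishes since, by Lemma \ref{lem_esp_Nt2}, $\E_{\delta_x}[N_{t+T}]$ grows at the positive Malthusian rate $\max(g,\beta)-q$. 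The off-diagonal $\E_{\delta_x}[N_{t+T}]^{-2}\E_{\delta_x}[\sum_{u\ne v}G^uG^v]$ is treated with the Many-to-Two formula, which expresses it by integrating over the time $\vartheta$ of the most recent common ancestor of the pair and then running two conditionally independent auxiliary spines after $\vartheta$: pairs with $\vartheta$ well below $t$ contribute a product of two decorrelated spine functionals on the far window $[t,t+T]$, each asymptotically of the size of $\E_{\delta_x}[S_t]\to0$ by ergodicity, while the total mass of pairs with recent common ancestor is negligible relative to $\E_{\delta_x}[N_{t+T}]^2$. This yields $\E_{\delta_x}[S_t^2]\to0$.

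It then remains to pass from $S_t\to0$ in $L^2$ to the ratio $S_t/W_t$. Here I would use two facts: the deterministic bound $|N_{t+T}^{-1}\sum_u G^u|\le 2\|F\|_\infty$ on $\{N_{t+T}\ge1\}$ (an average of bounded terms), and the $L^2$-convergence of the nonnegative renormalized size $W_t\to W_\infty$ with $W_\infty>0$ almost surely on survival, which follows from supercriticality together with a second-moment bound on $N_{t+T}$. The linear division rate forces $\E_{\delta_x}[N_{t+T}^2]$ to involve high moments of the parasite quantity, which is exactly the role of the moment assumption \eqref{cond_moment2_pi} in Assumption \ref{ass_E2}. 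Splitting on $\{W_t\ge\delta\}$ and $\{0<W_t<\delta\}$ gives, for each fixed $\delta>0$,
\begin{equation*}
\E_{\delta_x}\!\left[\mathbf 1_{\{N_{t+T}\ge1\}}\frac{S_t^2}{W_t^2}\right]\le \frac{1}{\delta^2}\,\E_{\delta_x}[S_t^2]+4\|F\|_\infty^2\,\P_{\delta_x}\!\left(N_{t+T}\ge1,\,W_t<\delta\right).
\end{equation*}
Letting first $t\to\infty$ (the first term vanishes by the previous step) and then $\delta\to0$ (the $\limsup$ of the second term is bounded by $4\|F\|_\infty^2\,\P_{\delta_x}(\mathrm{survival},\,W_\infty\le\delta)\to0$, since $W_\infty>0$ on survival) concludes the proof.

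The hard part is the ergodicity input, which enters twice: once to forget the initial condition (so that the arbitrary starting point $y$ of $m_t$ and the true starting point $x$ of the spine yield the same limit), and once to decorrelate the two spines after their branching time in the Many-to-Two estimate. Because the auxiliary process $Y^{(t+T)}$ is time-inhomogeneous (its penalization depends on the remaining time to the horizon $t+T$), one cannot invoke a stationary-semigroup ergodic theorem directly; I expect the argument to rest on a Lyapunov/coupling contraction for the SDE \eqref{eq:EDSauxi}, for which the moment bounds guaranteed by \eqref{cond_moment2_pi} together with the control of $\sigma^2(x)/x^2$ and $p(x)/x^2$ at infinity (Assumption \ref{ass_E1}) are the essential quantitative ingredients.
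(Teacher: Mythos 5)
Your strategy---a self-contained second-moment argument combining a Many-to-One formula for the mean, a Many-to-Two formula for the fluctuations, and ergodicity of the time-inhomogeneous auxiliary process---is indeed the machinery that sits inside this result, but the paper does not re-derive it: its proof consists in verifying the hypotheses of the general law of large numbers of \cite[Corollary 3.7]{marguet2017law}, namely the ergodicity of $Y^{(t)}$ (Lemma \ref{lemma:Lyapunov_min}), a pair-correlation condition (Assumption 3.4 there), and the bound $\sup_{t}\E_{\delta_x}[N_t^2]/\E_{\delta_x}[N_t]^2<\infty$ (Lemma \ref{lem_esp_Nt2}). Measured against either route, your proposal has a genuine gap at the final step: you pass from $S_t\to 0$ to $S_t/W_t$ by asserting that $W_t=N_{t+T}/\E_{\delta_x}[N_{t+T}]$ converges in $L^2$ to a limit $W_\infty$ which is almost surely positive on survival, and that this ``follows from supercriticality together with a second-moment bound.'' It does not. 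Under Assumption \ref{ass_E} the division rate $r(x)=\alpha x+\beta$ with $\alpha>0$ depends on the trait, so $(N_t)_{t\ge0}$ is not a Markov branching process and $W_t$ is \emph{not} a martingale; $L^2$-boundedness therefore yields neither convergence nor positivity of a limit (for classical branching processes both facts rest on the martingale structure, i.e.\ on Kesten--Stigum-type arguments that are unavailable here). Moreover, in the regime $\beta>\max(g,q)$, Lemma \ref{lem_esp_Nt2} shows that $\E_{\delta_x}[N_t^2]/\E_{\delta_x}[N_t]^2$ tends to $C_1^2(x)\big/\bigl(1+\alpha x/(\beta-g)\bigr)^2$, which is strictly larger than $1$, so $W_t$ certainly does not converge to a constant and a real theorem would have to be proved. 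The paper's route avoids this entirely: the cited result requires only the boundedness of $\E[N_t^2]/\E[N_t]^2$, not the existence or positivity of $W_\infty$.

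A second, related problem is that your Many-to-Two step is sketched at exactly the point where the hypotheses of Assumption \ref{ass_E2} do real work, and you misattribute the role of the moment condition \eqref{cond_moment2_pi}. Controlling the off-diagonal pairs by the time of their most recent common ancestor, and decorrelating the two lineages after that time (which, beware, are not two independent copies of the one-spine auxiliary process: they carry an extra bias through the ratios $m(\theta x,s,t)/m(x,s,t)$), is precisely the content of Assumption 3.4 of \cite{marguet2017law}. The paper verifies it via the uniform moment bound $\sup_{t\geq 0}\sup_{s\leq t}\E_x\bigl[(Y_s^{(t)})^5\bigr]<\infty$ for the solution of \eqref{eq:EDSauxi}, obtained from the explicit drift and jump-rate functions $f_1,f_2,f_3$, and it is exactly there that $\int_0^\infty (z\vee z^6)\pi(dz)<\infty$ is needed. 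It is \emph{not} needed to control $\E_{\delta_x}[N_{t+T}^2]$: Lemma \ref{lem_esp_Nt2} is proved under Assumption \ref{ass_E} alone. So the hard quantitative estimate your sketch implicitly relies on is missing, while the assumption you invoke for the step you do spell out is not the one that makes it true. Filling these two gaps would essentially amount to reproving \cite[Corollary 3.7]{marguet2017law} together with the paper's moment lemma, which is why the paper proceeds by citation and hypothesis-checking rather than by a direct argument.
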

Theorem \ref{thm:conv_auxi} ensures that asymptotically, the trajectory of the traits of a sampling along its ancestral lineage corresponds to the trajectory of $Y$. Hence, the study of the asymptotic behaviour of the proportion of individuals satisfying some properties, such as the proportion of infected individuals, 
is reduced to the study of the time-inhomogenous process $Y$.

To state the following results, we also need to introduce conditions under which the quantity of parasites may {\bf{(LN0)}} or may not {\bf{(SN0)}} reach the state $0$. They are almost necessary and sufficient conditions (see  \cite[Remark 3.2 and Theorem 3.3]{companion}). \al{ 
Condition \ref{A1} (for small noise at $0$) writes
\begin{enumerate}[label=\bf{(SN0)}]
\item\label{A1} 
There exist $a>1$ and $f:\R_+\to \R_+$ such that $\E[\Theta^{1-a}]<\infty$ and
\begin{equation*}
\frac{g(u)}{u}- a\frac{\sigma^2(u)}{u^2} +u^{-\brho}C_a- p(u)I_a(u) = f(u)+o(\ln(u)), \quad u \to 0,
\end{equation*}
\end{enumerate}
where $C_a$ and $I_a$ are defined in \eqref{eq:Ia}.}
%has already been introduced,
Under Assumption \ref{ass_E2}, it reduces to $\sigma^2(u)/u^2 = o(\ln(u^{-1}))$ close to $0$. Condition \ref{A2} (for large noise at $0$) writes
\begin{enumerate}[label=\bf{(LN0)}]
\item\label{A2}  There exist $\eta>0$ and $u_0> 0$ such that for all $u\leq u_0$
\begin{equation*}
\frac{g(u)}{u}-\frac{\sigma^2(u)}{u^2} \leq -\ln(u^{-1}) \left(\ln\ln(u^{-1})\right)^{1+\eta}.
\end{equation*}
\end{enumerate}
Under Assumption \ref{ass_E2} it reduces to $\sigma^2(u)/u^2 \geq \ln(u^{-1})\left(\ln\ln(u^{-1})\right)^{1+\eta}$ close to $0$.

The following proposition states that, in the case of a division rate increasing linearly with the quantity of parasites, at least two long-time behaviours are possible for the infection at the cell population level: extinction or stabilization of the infection.
\begin{prop}\label{prop:temps_long_auxi}
Assume that Assumption \ref{ass_E2} holds.
\begin{itemize}
\item [i)] If \ref{A2} holds, then
% if $(\alpha>0)$ or $(g+2\beta\E[\ln(\Theta)]<0)$,
\begin{equation*}
\E_{\delta_0}\left[\mathbf{1}_{\{N_t \geq 1\}}\left(\frac{\#\lbrace u\in V_t: X_t^u >0\rbrace} {N_t}\right)^2\right] \to 0, \quad (t\rightarrow \infty).
\end{equation*}
\item [ii)] If \ref{A1} holds, then for any $x_0,y_0>0$, $0<\mathfrak{a}<\mathfrak{b}$, 
\begin{align*}
\E_{\delta_{x_0}}\left[\mathbf{1}_{\lbrace N_{t}\geq 1\rbrace}\left(\frac{\sum_{u\in V_{t}}\mathbf{1}_{\{\mathfrak{a}<X_{t}^{u}<\mathfrak{b}\}}}{N_{t}}-
\P\left(\mathfrak{a}\leq Y^{(t)}_t \leq \mathfrak{b}\Big| Y_0^{(t)}=y_0\right)\right)^2\right]\xrightarrow[t\rightarrow +\infty]{} 0,
\end{align*}
and there exist two processes $(\mathfrak{Y}^{(1)},\mathfrak{Y}^{(2)})$ satisfying
$ \mathfrak{Y}^{(1)}_t\leq Y^{(t)}_t\leq \mathfrak{Y}^{(2)}_t \ \text{a.s.}\ \forall t \geq 0 $
and admitting a proper limit (without atoms in $0$ and $\infty$) at infinity.
 \end{itemize}
\end{prop}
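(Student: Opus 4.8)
The plan is to reduce every assertion to a statement about the one–dimensional marginal $Y^{(t)}_t$ of the time–inhomogeneous auxiliary process solving \eqref{eq:EDSauxi}, using the law of large numbers of Theorem \ref{thm:conv_auxi} as the transfer device. The convergence displayed in \emph{ii)} is the most direct: applying Theorem \ref{thm:conv_auxi} with horizon $T=0$ and with the bounded measurable functional $F(x)=\mathbf 1_{\{\mathfrak{a}<x<\mathfrak{b}\}}$ yields immediately
$$
\E_{\delta_{x_0}}\Big[\mathbf 1_{\{N_t\ge 1\}}\Big(\frac{\sum_{u\in V_t}\mathbf 1_{\{\mathfrak{a}<X_t^u<\mathfrak{b}\}}}{N_t}-\P\big(\mathfrak{a}<Y^{(t)}_t<\mathfrak{b}\,\big|\,Y^{(t)}_0=y_0\big)\Big)^2\Big]\xrightarrow[t\to\infty]{}0 .
$$
Replacing the strict inequalities by the non–strict ones of the statement is harmless because the marginal law of $Y^{(t)}_t$ charges neither $\{\mathfrak{a}\}$ nor $\{\mathfrak{b}\}$, which I would deduce from the non–degeneracy of the diffusive part of \eqref{eq:EDSauxi} away from the origin under Assumption \ref{ass_E2}.

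The substantial part of \emph{ii)} is the construction of the bounding processes $\mathfrak{Y}^{(1)}\le Y^{(t)}\le\mathfrak{Y}^{(2)}$. Since $Y^{(t)}$ is time–inhomogeneous, I cannot invoke the ergodic dichotomy of \cite{companion} directly; instead I would read off from \eqref{eq:EDSauxi} bounds on its drift, diffusion and jump coefficients that are \emph{uniform in the horizon $t$}, and couple $Y^{(t)}$ from below and above with two autonomous (time–homogeneous) processes driven by the same Brownian motion and Poisson measures, using a comparison theorem for one–dimensional SDEs with monotone jump structure. The upper bound $\mathfrak{Y}^{(2)}$ is kept from exploding by the growth constraints \eqref{eq:limsup_sigma} and \eqref{cond_moment2_pi}, while the lower bound $\mathfrak{Y}^{(1)}$ is kept away from $0$ by the small–noise condition \ref{A1}, which under Assumption \ref{ass_E2} reduces to $\sigma^2(u)/u^2=o(\ln(1/u))$ near $0$. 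I would then show, via scale–function and Lyapunov arguments at the two boundaries together with the recurrence criteria of \cite{companion}, that each $\mathfrak{Y}^{(i)}$ converges in law to a proper limit charging neither $0$ nor $\infty$; the sandwiching then forces every subsequential limit of the law of $Y^{(t)}_t$ into $(0,\infty)$, which is the claimed absence of atoms.

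For \emph{i)} I would again use Theorem \ref{thm:conv_auxi}, now with $F(x)=\mathbf 1_{\{x>0\}}$, and the triangle inequality in $L^2(\P_{\delta_0})$: writing $p_t:=\P(Y^{(t)}_t>0\mid Y^{(t)}_0=y)$,
$$
\E_{\delta_0}\big[\mathbf 1_{\{N_t\ge 1\}}\big(\#\{u\in V_t:X_t^u>0\}/N_t\big)^2\big]^{1/2}\le \varepsilon_t+p_t ,
$$
where $\varepsilon_t\to 0$ is the error furnished by Theorem \ref{thm:conv_auxi} and we bounded $\E_{\delta_0}[\mathbf 1_{\{N_t\ge1\}}p_t^2]^{1/2}\le p_t$. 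It then remains to prove the extinction of the auxiliary process, $p_t\to 0$, under the large–noise condition \ref{A2} (reading $\sigma^2(u)/u^2\ge \ln(1/u)(\ln\ln(1/u))^{1+\eta}$ near $0$ under Assumption \ref{ass_E2}): here I reuse the monotone coupling of the previous step, but now \ref{A2} drives the bounding processes to $0$, so that the absorption criterion of \cite[Theorem 3.3]{companion} applied to the autonomous bounds, transferred through the coupling, gives $p_t\to 0$.

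The main obstacle, shared by both parts, is precisely the time–inhomogeneity of $Y^{(t)}$: the clean boundary classification and convergence–to–equilibrium results of \cite{companion} are available only for homogeneous processes, so the whole argument rests on producing monotone couplings with autonomous processes whose coefficients dominate (resp. are dominated by) those of $Y^{(t)}$ uniformly in $t$, and on checking that the boundary behaviour at $0$ selected by \ref{A1}/\ref{A2} and the non–explosion at $\infty$ guaranteed by \eqref{eq:limsup_sigma}–\eqref{cond_moment2_pi} are preserved under these couplings.
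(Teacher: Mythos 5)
Your part \emph{ii)} follows essentially the paper's own route: the $L^2$ statement is read off Theorem \ref{thm:conv_auxi} applied to an indicator functional, and the sandwich is obtained by monotone couplings with bounding processes whose coefficients dominate (resp.\ are dominated by) those of $Y^{(t)}$ uniformly in the horizon, to which the convergence results of \cite{companion} are then applied; the paper's choices are $\mathfrak{Y}^{(1)}=\tY$ and $\mathfrak{Y}^{(2)}=\hat Y$ from \eqref{eq:tildeY} and \eqref{eq:hatY}. (One technical difference: the paper does not make these couplings fully autonomous — the positive jump rate $f_3$ keeps its time dependence — and instead extends \cite[Theorem 6.2]{companion} to time-dependent jump rates in the appendix; your plan to bound the jump rate uniformly in time is in the same spirit but would still need such an extension, since the uniform bound $p(x)(1+z/x)$ is state-dependent.)

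Part \emph{i)}, however, contains a genuine gap. You correctly reduce the claim, via Theorem \ref{thm:conv_auxi} and the triangle inequality, to showing $p_t=\P\bigl(Y^{(t)}_t>0\,\bigm|\,Y^{(t)}_0=y\bigr)\to 0$, and you propose to get this from the coupling $Y^{(t)}_s\leq \hat Y_s$ together with ``the absorption criterion of \cite[Theorem 3.3]{companion}''. But under \ref{A2} that theorem only gives $\P_x\left(\tau^-(0)<s\right)>0$ for all $x,s>0$, i.e.\ a \emph{positive probability} of absorption, and this does not imply $\P_x(\hat Y_t>0)\to 0$: nothing in \ref{A2} alone prevents the dominating process from drifting to infinity with positive probability and never being absorbed, in which case $p_t$ stays bounded away from $0$. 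What is needed is \emph{almost sure} absorption in finite time, i.e.\ \cite[Theorem 6.2(ii)]{companion} (Eq.~(6.3) there), and invoking it requires verifying, besides \ref{A2}, the non-explosion condition \ref{eq:SNinfinity-1} and the drift condition \ref{eq:LSG} at infinity for $\hat Y$ — the latter (which holds here because $\E[\Theta\ln\Theta]<0$ and the division rate grows linearly, using \eqref{eq:limsup_sigma}) is exactly what makes $\hat Y$ recurrent, so that positive-probability absorption from compact sets can be upgraded to a.s.\ absorption. The paper's proof of \emph{i)} consists almost entirely of this verification (plus the time-dependent-jump-rate extension of \cite[Theorem 6.2]{companion}); your sketch mentions non-explosion only in passing and never invokes the recurrence condition, so as written the key step $p_t\to 0$ is unjustified.
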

Notice that point {\it i)} covers the classical diffusive function ($\sigma^2(x)=\sigma^2 x$, $\sigma>0$).
The cell population may thus recover if the dynamics of the parasites in a cell is such that the probability of absorption of the infection process 
is positive (condition \ref{A2}). \\
Under condition \ref{A1}, the parasites in the cell line do not get extinct, and point {\it ii)} gives information on the distribution of the quantity of parasites in the cells for large times.  
The auxiliary process $Y$ of Theorem \ref{thm:conv_auxi}, describing the behavior of a typical quantity of parasites, is time-inhomogeneous. The result is thus obtained by a coupling with $\mathfrak{Y}^{(1)}$ and $\mathfrak{Y}^{(2)}$ such that our auxiliary process is sandwiched between the two processes $\mathfrak{Y}^{(1)}$ and $\mathfrak{Y}^{(2)}$, for which we are able to prove convergence and some properties of the limit. \\

\al{
{\bf Running example}. In the case of parasites following \eqref{Feller_diff_jumps} with cell population dynamic given by \eqref{eq:def_example_cells}, if $\alpha>0$, $\alpha_q=0$ and $\alpha_g\vee \beta>\beta_q$, Assumptions \ref{ass_E} and \ref{ass_E2} are satisfied. Moreover,
\begin{itemize}
\item if $\beta_\sigma>0$ then \ref{A2} holds so that Proposition \ref{prop:temps_long_auxi}i) applies and the proportion of infected cells goes to $0$ in $\mathbb{L}^2$ as times goes to infinity;
\item if $\beta_\sigma=0$, then \ref{A1} holds so that Proposition \ref{prop:temps_long_auxi}ii) applies and the proportion of cells with a quantity of parasites in a given interval converges to the \cha{corresponding} probability \cha{for} the auxiliary process $Y^{(t)}$. Therefore, the infection remains but is contained. 
\end{itemize} 
}
The rest of the paper is dedicated to the proofs of the results presented in previous sections. As mentioned before, the proofs 
rely on the construction of an auxiliary process, which gives information on the dynamics of the quantity of parasites in a `typical' cell. 
But to have information on the long-time behaviour of the infection at the population level, we need to derive additional results on the number of cells alive, 
which is not an easy task due to both the death rate and the dependence of the cell division rate in the quantity of parasites.
\section{Many-to-One formula}\label{sec:MTO}

\subsection{Construction of the auxiliary process} \label{section_constr_aux}

Recall from \eqref{Ztdirac} that the population state at time $t$, $Z_t$, can be represented by a sum of Dirac masses. 
 We denote by $(M_t,t\geq 0)$ the first-moment semi-group associated with the population 
process $Z$ given for all measurable functions $f$ and $x,t\geq 0$ by
$$
M_tf(x)=\mathbb{E}_{\delta_x}\left[\sum_{u\in V_t} f(X_t^u)\right].
$$ 
The trait of a typical individual in the population at time $t$ is characterized by the so-called 
auxiliary process $(Y^{(t)}_s, s\leq t)$ (see \cite[Theorem 3.1]{marguet2016uniform} for detailed computations and proofs). Its associated time-inhomogeneous 
semi-group is given for $r \leq s \leq t$, $x \geq 0$ by
\begin{equation}\label{eq:normalization-for-auxi}
P_{r,s}^{(t)}f(x)=\frac{M_{s-r}(fM_{t-s}\mathbf{1})(x)}{M_{t-r}\mathbf{1}(x)},
\end{equation}
where $\mathbf{1}$ is the constant function on $\mathbb{R}_+$ equal to $1$.
More precisely, if we denote by $m(x,s,t)=M_{t-s}\mathbf{1}(x)$ the mean number of cells in the 
population at time $t$ starting 
from one individual with trait $x$ at time $s$ with $s\leq t$,
then, for all measurable bounded functions $F:\mathbb{D}([0,t],\mathbb{R}_+)\rightarrow\mathbb{R}$, we have:
\begin{equation}\label{eq:mto}
\E_{\delta_{x}}\left[\sum_{u\in V_{t}}F\left(X_{s}^{u},s\leq t\right)\right]=m(x,0,t)\E_{x}\left[F\left(Y_{s}^{(t)},s\leq t\right)\right].
\end{equation}
The Markov process $\left(Y_{s}^{(t)}, s\leq t\right)$ is time-inhomogeneous and its law is characterized by its associated 
infinitesimal generator 
$(\mathcal{A}_{s}^{(t)}, s\leq t)$ given for $f\in\mathcal{D}(\mathcal{A})$ and $x\geq 0$ by: 
\begin{align}\label{eq:gene_auxi}
\mathcal{A}_{s}^{(t)}f(x)= & \widehat{\mathcal{G}}_{s}^{(t)}f(x) + 2r(x)\int_{\R_+}\left(f\left(\theta x\right)-f\left(x\right)\right)\frac{m(\theta x,s,t)}{m(x,s,t)}\kappa(d\theta),
\end{align}
\label{eq:auxi}
\begin{align*}
\text{where} \quad \mathcal{D}(\mathcal{A})=\left\lbrace f\in\mathcal{C}_b^2(\mathbb{R}_+)\text{ s.t. } m(\cdot,s,t)f\in\mathcal{C}_b^2(\mathbb{R}_+),\ \forall t\geq 0,\ \forall s\leq t \right\rbrace \quad \text{and} 
\end{align*}
\begin{align*}
\widehat{\mathcal{G}}_{s}^{(t)}f(x)= & \left(g(x)+2\sigma^2(x)\frac{\partial_xm(x,s,t)}{m(x,s,t)}+ p(x)\int_{\mathbb{R}_+}z\left(\frac{m(x+z,s,t)-m(x,s,t)}{m(x,s,t)}\right)\pi(dz)\right)f'(x)\\
& +\sigma^2(x)f''(x) + p(x)\int_{\mathbb{R}_+}(f(x+z)-f(x)-zf'(x))\frac{m(x+z,s,t)}{m(x,s,t)}\pi(dz)\\
& +x\int_{\mathbb{R}_+}(f(x+z)-f(x))\frac{m(x+z,s,t)}{m(x,s,t)}\rho(dz).
\end{align*}
Those formulae come from \cite[Theorem 3.1]{marguet2016uniform}, with $B(x) = r(x)+q(x)$ and $m(x,dy) = 2r(x)(r(x)+q(x))^{-1}\int_0^1\delta_{\theta x}(dy)\kappa(d\theta)$.
Note that explicit expressions for the mean population size $m(x,s,t)$ are usually out of range, except for particular cases (see Section \ref{sec:LDCD}).

\subsection{Role of the death rate in the auxiliary process}
In this section, we compare the auxiliary process associated to a population with or without death. More precisely, we demonstrate why the death rate does not appear in the generator of the auxiliary process.

Let $(\tilde{Z}_t,t\geq 0)$ be the previously defined population process to which we add a trait $D_t^u$ to each individual $u$ in the population: 
$D_t^u=0$, the individual is still alive, $D_t^u=1$, the individual is dead. To compare the population dynamics with or without death, 
we consider that the trait of the dead individuals still evolves and that they can still divide but their 
descendants will be born with the status $D_t=1$. More precisely, 
$$
\tilde{Z}_t := \sum_{u\in V_t}\delta_{(X_t^u, D_t^u)} = \sum_{u\in V_t^0}\delta_{(X_t^u,0)} + \sum_{u\in V_t^1}\delta_{(X_t^u, 1)},
$$
where $V_t^0$ (respectively $V_t^1$) denotes the alive (respectively dead) individuals in the population at time $t$. 
We denote by $N_t^0$ (respectively $N_t^1$) its cardinal and introduce $\tX_t^u  =(X_t^u, D_t^u)$ for all $u\in \mathcal{U}$ and $t\geq 0$.  Next, we consider the following dynamics:
\begin{itemize}
\item[-] a death event for $u$ leads to set $D_t^u=1$. Therefore, it does not affect dead cells. 
\item[-] a division event does not change the status $D_t^u$ of an individual and its descendants inherit the status of their ancestor. 
\item[-] we extend the generator $\mathcal{G}$ to the functions $f:\overline{\mathbb{R}}_+\times \lbrace 0,1\rbrace \rightarrow \mathbb{R}_+$ such that $f(\cdot, 0), f(\cdot,1)\in\mathcal{C}_b^2(\overline{\mathbb{R}}_+)$. 
\end{itemize}
Then, $(\tilde{Z}_t,t\geq 0)$ is defined as the unique strong solution in $\mathcal{M}_P(\mathbb{R}_+\times\lbrace 0, 1\rbrace)$ to
\begin{multline*}
\langle \tilde{Z}_{t},f\rangle = f\left(x_{0}, 0\right)
+\int_{0}^{t}\int_{\mathbb{R}_+}\mathcal{G}f(\widetilde{x})\tilde{Z}_{s}\left(d\widetilde{x}\right)ds+M_{t}^f(x_0,0)\\
 +\int_{0}^{t}\int_{E}\mathbf{1}_{\left\{u\in V_{s^{-}}\right\}}
\left(  \mathbf{1}_{\left\{z\leq r(X_{s^{-}}^u)\right\}}\left(f\left(\theta X_{s^-}^u, D_s^u \right)+ f\left((1-\theta) X_{s^-}^u, D_s^u \right)-
f\left(X_{s^{-}}^{u}, D_s^u\right)\right)\right.\\
\left.+\mathbf{1}_{\left\{0<z-r(X_{s^{-}}^u)\leq q(X_{s^{-}}^u)\right\}}\left(f\left( X_{s^-}^u, 1 \right)- f\left(X_{s^{-}}^{u}, D_{s^{-}}^u \right)\right)\right)
 M\left(ds,du,d\theta,dz\right),
\end{multline*}
for all $f:\mathbb{R}_+\times \lbrace 0,1\rbrace \rightarrow \mathbb{R}_+$ such that $f(\cdot, 0), f(\cdot,1)\in\mathcal{C}_b^2(\mathbb{R}_+)$, 
where $M_\cdot^f$ is an $\tilde{\mathcal{F}}_t$-martingale ($\tilde{\mathcal{F}}_t$ denotes the canonical extension of $\mathcal{F}_t$).
Let $N_t = N_t^0 + N_t^1$. Introduce 
$$m_0(x,s,t):=\mathbb{E}\left[N^0_t \big| \tilde{Z}_s =\delta_{(x,0)}\right]$$
and consider the auxiliary process $\tY^{(t)}_s = (Y_s^{(t)}, D_s)$ for all $0\leq s\leq t$ and its associated generator 
$\widetilde{\mathcal{A}}_s^{(t)}$ given for all $\psi:\overline{\mathbb{R}}_+\times \lbrace 0,1\rbrace \rightarrow \mathbb{R}_+$ 
such that $\psi(\cdot,0),\psi(\cdot,1)\in\mathcal{D}(\mathcal{A})$, and for all $(x,d)\in\overline{\mathbb{R}}_+\times \lbrace 0,1\rbrace$, by
\begin{align}\nonumber\label{eq:gene_2var}
\widetilde{\mathcal{A}}_s^{(t)}\psi(x,d) = &\widehat{\mathcal{G}}_s^{(t)}\psi(\cdot, d)(x)+ 2r(x)\int_{\R_+}\left(\psi(\theta x,d)-\psi(x,d)\right)\frac{m((y,d),s,t)}{m((x,d),s,t)}\kappa(d\theta)\\
&+ q(x)\left(\psi(x,1)-\psi(x,d)\right).
\end{align}
Using the Many-to-One formula \eqref{eq:mto}, we get
\begin{align*}
m_0(x,s,t) = \mathbb{E}\left[\sum_{u\in V_t}\mathbf{1}_{\lbrace D_t^u = 0\rbrace}\Big| \tilde{Z}_s = \delta_{(x,0)}\right] = m((x,0),s,t)\mathbb{P}\left( D_t = 0\big|
\tY_s^{(t)}= (x,0)\right). 
\end{align*}
As we can see on the expression of the generator of the auxiliary process in \eqref{eq:gene_2var}, $D_t$ switches from $0$ to $1$ at rate $q(x)$ and 
$1$ is absorbing for $D_t$. Therefore, 
\begin{align*}
m_0(x,s,t) = \mathbb{E}\left[\exp\left(-\int_s^t q(Y_u^{(t)})du\right)\Big|Y_s^{(t)} = x\right]m((x,0),s,t),
\end{align*}
and in the case $q(x)\equiv q\geq 0$ for all $x\geq 0$, we get
$
m_0(x,s,t) = e^{-q(t-s)}m((x,0),s,t).
$
In particular, for all $x,y\geq 0$
\begin{align}\label{mxtbis}
\frac{m_0(y,s,t)}{m_0(x,s,t)}=\frac{m((y,0),s,t)}{m((x,0),s,t)}.
\end{align}
The expressions appearing in the generator of the auxiliary process given page \pageref{eq:auxi} are identical with or without death, but the difference is hidden in the ratios of $m(y,s,t)/m(x,s,t)$.
From the previous computations, we obtain that
in the case of a constant death rate, the auxiliary 
process is the same as the auxiliary process of a population process without death and 
$$
\mathbb{E}\left[\sum_{u\in V_t^0} f(X_s^u)\Big| Z_r = \delta_x\right] = m_0(x,r,t)\mathbb{E}\left[f\left(Y_s^{u}\right)\big| Y_r^{(t)}=x\right]=e^{-q(t-r)}\mathbb{E}\left[\sum_{u\in V_t} f(X_s^u)\Big| Z_r = \delta_x\right].
$$

\subsection{The case $r(x)= \alpha x + \beta $, $q(x)\equiv q$}
\label{sec_constr_lineaire}

Assume that $c_\brho=0$ (no stable positive jumps). 
Then under Assumption \ref{ass_E}, a direct computation (see \cite[Section 2.2.3]{marguet2016uniform} for details) shows that if $g\neq \beta$, the mean number of individuals can be written
\begin{equation} \label{mxst}
m(x,s,t)  =\frac{\alpha x}{g-\beta}e^{(g-q)(t-s)}+\left(1-\frac{\alpha x}{g-\beta}\right)e^{(\beta-q)(t-s)}.\end{equation}
We introduce the following functions for $y>0$, $s,z\geq 0$, and $\theta \in [0,1]$: 
\begin{align}\label{eq:f1}
f_1(y,s):=
gy + \left(2\sigma^2(y)+p(y)\int_{\R_+}z^2\pi(dz)\right)\frac{\alpha \left(e^{(g-\beta)s}-1\right)}{g-\beta+\alpha y\left(e^{(g-\beta)s}-1\right)},
\end{align}
\begin{align}\label{eq:f2}
f_2(y,s,\theta):=
2 (\alpha y+\beta)\frac{g-\beta+\alpha \theta y \left(e^{(g-\beta)s}-1\right)}{g-\beta+\alpha y\left(e^{(g-\beta)s}-1\right)} \quad \text{and} \quad
\end{align}
\begin{align}\label{eq:f3}
f_3(y,s,z):= 
p(y)\left(1+\frac{\alpha z\left(e^{(g-\beta)s}-1\right)}{(g-\beta)+\alpha y\left(e^{(g-\beta)s}-1\right)}\right).
\end{align}
We obtain that $\mathcal{A}^{(t)}$ is the infinitesimal generator of the solution to the following SDE, when existence and uniqueness in law
of the solution hold. For $0\leq s \leq t$,
\begin{align}\nonumber\label{eq:EDSauxi}
 Y_s^{(t)} =&  Y_0^{(t)}+\int_0^sf_1(Y_u^{(t)},t-u)du + \int_0^s\int_0^\infty \int_0^{f_3(Y_{u^-}^{(t)},t-u,z)}z\widetilde{Q}(du,dz,dx) \\
& + \int_0^s \sqrt{2\sigma^2\left( Y_u^{(t)}\right)}dB_u+ \int_0^s\int_0^1\int_{0}^{f_2(Y_{u^-}^{(t)},t-u,\theta)}
(\theta-1)Y_{u^-}^{(t)}N(du,d\theta,dz),
\end{align}
where $\widetilde{Q}$, $B$ are the same as in \eqref{X_sans_sauts} and $N$ is a PPM on $\R_+\times \R_+\times [0,1]$ with intensity $ds\otimes dx\otimes \kappa(d\theta)$.\\

The auxiliary process $Y^{(t)}$ can be realised as the unique strong solution to the SDE \eqref{eq:EDSauxi}
under some moment conditions on the measure associated with the positive jumps. We also need an additional assumption on $p$ that ensures that the rate of positive jumps $f_3$ of the process $Y$ is increasing with the quantity of parasites.
\begin{prop}\label{prop_sol_SDE_auxi}
 Suppose that Assumption \ref{ass_E} holds.
 Then, Equation \eqref{eq:EDSauxi} has a pathwise unique nonnegative strong solution.
\end{prop}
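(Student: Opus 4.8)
The plan is to read the time-inhomogeneous equation \eqref{eq:EDSauxi} as a one-dimensional, nonnegative jump--diffusion SDE belonging to the class of continuous-state nonlinear branching equations solved in \cite{li2017general, companion, palau2018branching}, and to verify the hypotheses ensuring pathwise uniqueness and strong existence for such equations on the fixed horizon $[0,t]$. The time-inhomogeneity enters only through the argument $t-u$, which on $[0,t]$ ranges over the compact set $[0,t]$; since the coefficients $f_1,f_2,f_3$ are jointly continuous and their regularity in the space variable (described below) is uniform in this argument, the inhomogeneity is harmless. Throughout I would assume $g\neq\beta$, so that \eqref{mxst} holds and the denominators in \eqref{eq:f1}--\eqref{eq:f3} do not vanish; the boundary case $g=\beta$ is identical once \eqref{mxst} is replaced by its limit as $g\to\beta$.

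First I would record the regularity of the coefficients on a box $[0,n]\times[0,t]$ and isolate the structural features that drive the proof. Writing $\tau=t-u$, formula \eqref{mxst} gives the identities $f_2(y,\tau,\theta)=2r(y)\,m(\theta y,\cdot,t)/m(y,\cdot,t)$ and $f_3(y,\tau,z)=p(y)\,m(y+z,\cdot,t)/m(y,\cdot,t)$, and since $m(\cdot,\cdot,t)$ is bounded away from $0$ and $\infty$ on the box, $f_2$ and $f_3$ are locally Lipschitz in $y$, while $\sqrt{2\sigma^2(\cdot)}$ is H\"older-$1/2$ by Assumption \ref{ass_A} (always in force). The two facts that make the argument work are: the jump \emph{destinations} $y\mapsto\theta y$ and $y\mapsto y+z$ are nondecreasing; and the positive-jump rate $y\mapsto f_3(y,\tau,z)$ is itself nondecreasing, which is exactly equivalent to $y\mapsto p(y)/y$ being nondecreasing, i.e.\ to the hypothesis $xp'(x)\ge p(x)$ of Assumption \ref{ass_E}.

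Next I would establish pathwise uniqueness by the Yamada--Watanabe method, coupling two solutions $Y,\tilde Y$ through the same $B,\widetilde{Q},N$ and applying It\^o's formula to $\varphi_k(Y_s-\tilde Y_s)$, with $\varphi_k\uparrow|\cdot|$ the standard approximants and a localization $\tau_n=\inf\{s:Y_s\vee\tilde Y_s\ge n\}$. The continuous bracket produces $\tfrac12\varphi_k''(Y-\tilde Y)(\sqrt{2\sigma^2(Y)}-\sqrt{2\sigma^2(\tilde Y)})^2$, which vanishes as $k\to\infty$ by the defining property of $\varphi_k$; the fragmentation jumps give a genuinely Lipschitz term because both the displacement $(\theta-1)y$ and the rate $f_2$ are Lipschitz in $y$ and $\int_0^1|\ln\theta|\kappa(d\theta)<\infty$; and the positive jumps are controlled through the two structural facts above, the monotonicity of $f_3$ making the coupling order-preserving so that, after the jump-coefficient estimate of \cite{palau2018branching, companion}, its contribution is one-sided Lipschitz. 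I expect this step to be the main obstacle: the drift difference $f_1(Y,\tau)-f_1(\tilde Y,\tau)$ has a non-Lipschitz part proportional to $\sigma^2(Y)-\sigma^2(\tilde Y)$, which is only H\"older-$1/2$ and must be absorbed together with the continuous bracket by the same Yamada--Watanabe estimate, exactly as in the existence--uniqueness statement underlying \cite{companion}. A Gronwall argument then forces $Y=\tilde Y$ up to $\tau_n$, hence everywhere after the non-explosion estimate below.

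Finally, for strong existence it suffices, by the Yamada--Watanabe theorem, to combine pathwise uniqueness with a weak solution; such a solution I would build on $[0,t\wedge\tau_n]$ by truncating the large (finite-activity, by Assumption \ref{ass_E}) positive jumps, solving the bounded-coefficient equation, and passing to the limit, the moment condition $\int(z^2\wedge z^3)\pi(dz)<\infty$ guaranteeing that the compensated small-jump integral is a well-defined $L^2$-martingale. Nonnegativity is immediate, since $f_1(0,\tau)=0$ and $\sigma(0)=0$ keep the continuous part from crossing $0$ downwards, positive jumps add $z\ge0$, and fragmentation sends $y\mapsto\theta y\ge0$; non-explosion on the finite horizon $[0,t]$ follows from a Lyapunov estimate in which the factor multiplying $2\sigma^2(y)+p(y)\int_{\R_+}z^2\pi(dz)$ in \eqref{eq:f1} is bounded by $1/y$, lowering by one power of $y$ the order of the $\sigma^2$- and $p$-dependent part of the drift, the remaining jump contributions being controlled by $\int(z^2\wedge z^3)\pi(dz)<\infty$ and $\int_0^1|\ln\theta|\kappa(d\theta)<\infty$; alternatively, non-explosion can be read off the fact that $Y^{(t)}$ is a Doob-type transform with weight $m(\cdot,\cdot,t)$ of the non-exploding dynamics \eqref{X_sans_sauts} of Proposition \ref{pro_exi_uni}.
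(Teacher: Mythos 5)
Your overall route coincides with the paper's: the paper proves Proposition \ref{prop_sol_SDE_auxi} as a direct application of \cite[Proposition 1]{palau2018branching} (whose proofs go back to \cite{li2012strong}), making exactly your two structural observations --- that the dependence on $t-u$ is harmless and ``does not bring any modification to the proofs'', and that the needed structure (monotone positive-jump rate, which you correctly trace to $xp'(x)\geq p(x)$; H\"older-$1/2$ diffusion; locally Lipschitz jump rates; the moment condition $\int(z^2\wedge z^3)\pi(dz)<\infty$) is supplied by Assumptions \ref{ass_A} and \ref{ass_E}. The difference is that you propose to re-run the Yamada--Watanabe argument by hand rather than cite the packaged statement, and it is in this re-run that there is a genuine gap, at the very step you yourself call ``the main obstacle''.

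In the Yamada--Watanabe scheme, the gain of order $1/k$ comes only from pairing $\varphi_k''(\zeta)\leq 2/(k|\zeta|)$ with the \emph{squared} diffusion difference $\bigl(\sigma(Y_s)-\sigma(\widetilde{Y}_s)\bigr)^2\leq C|\zeta|$, where $\zeta=Y_s-\widetilde{Y}_s$; the drift difference is paired with $\varphi_k'$, which is only bounded by $1$. The non-Lipschitz part of $f_1$ in \eqref{eq:f1}, namely $2\sigma^2(y)\,\partial_x m(y,\cdot,t)/m(y,\cdot,t)$, therefore leaves a term of order $\E\bigl[|Y_s-\widetilde{Y}_s|^{1/2}\bigr]$ in the Gronwall inequality, and $u'\leq Cu+C\sqrt{u}$ with $u(0)=0$ does not force $u\equiv 0$: ``absorbing it together with the continuous bracket'' is not a valid mechanism, since nothing in the scheme gains a factor $1/k$ on drift terms. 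Nor does the appeal to \cite{companion} repair this: the SDE treated there, the analogue of \eqref{SDE_Y_diff_ct}, has drift $g$ alone, which satisfies the Osgood condition of Assumption \ref{ass_A} directly, and contains no $\sigma^2$-in-drift term --- that term is specific to the auxiliary process. What must actually be established is that $y\mapsto \sigma^2(y)\,\partial_x m(y,s,t)/m(y,s,t)$ admits an Osgood modulus such as $u(1-\ln u)$; this is the drift hypothesis among conditions (a)--(c) of \cite[Proposition 1]{palau2018branching} that the paper's proof verifies. It holds immediately when $\sigma^2$ is locally Lipschitz (as in every example of the paper), but it is \emph{not} a consequence of $\sigma$ being H\"older-$1/2$ (consider $\sigma^2(x)=x\bigl(1+|x-1|^{1/2}\bigr)$, which is only H\"older-$1/2$ at $x=1$), so in your argument it must either be checked as an extra hypothesis or circumvented by a transformation (scale function or Girsanov) exploiting that the irregular part of the drift is proportional to $\sigma^2$. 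Your remaining steps (weak existence by truncating the large jumps, nonnegativity, and the $1/y$ bound on the drift factor) are consistent with the paper's framework, although the concluding ``Doob-transform'' remark about non-explosion is heuristic rather than a proof.
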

The proof of this proposition is given in Appendix \ref{app:prop_sol_SDE_auxi}. The case $g=\beta$ will not be considered in this work, as it entails additional computations and 
does not bring new insights.

\section{Proofs}\label{sec:proofs}
\subsection{Proofs of Section \ref{sec_constant_diff}}

Let us introduce the SDE
\begin{align}\nonumber \label{SDE_Y_diff_ct}
Y_t= x& + \int_0^tg(Y_s)ds+ \int_0^t \sqrt{2 \sigma^2(Y_s) }dB_s+ \int_0^t \int_0^{p(Y_{s^-})}
\int_{\mathbb{R}_+}z\widetilde{Q}(ds,dx,dz)\\
&+\int_0^t\int_0^{Y_{s^-}}\int_{\mathbb{R}_+}zR(ds,dx,dz)+\int_0^t\int_0^{2r(Y_{s^-})} \int_0^1  (\theta-1)Y_{s^-}N(ds,dx,d\theta),
\end{align}
where $N$ is a PPM on $\R_+\times \R_+\times [0,1]$ with intensity $ds\otimes dx\otimes \kappa(d\theta)$. Note that $Y$ is well-defined as the unique strong solution to \eqref{SDE_Y_diff_ct} under Assumption \ref{ass_A} (see  \cite[Proposition 2.2]{companion} and Appendix \ref{proof_ex_uni}).
Then, Proposition \ref{prop_constant_diff} is a consequence of the following two lemmas.
\begin{lemma}\label{maj_mto}
 Assume that there exists a real number $\gamma$ such that $r(x)-q(x)\leq \gamma$ for any $x \in \R_+$, and let $f$ be a nonnegative measurable function on $\R_+$.
Then for $x,t \geq 0$,
$$ \E_{\delta_{x}}\left[\sum_{u\in V_{t}}f\left(X_{t}^{u}\right)\right]\leq e^{\gamma t}\E_{x}\left[f\left(Y_{t}\right)\right],
 $$
where $Y$ is the unique strong solution to the SDE \eqref{SDE_Y_diff_ct}.
\end{lemma}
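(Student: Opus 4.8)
The plan is to establish the exact Feynman--Kac (Many-to-One) representation
\[
\E_{\delta_x}\left[\sum_{u \in V_t} f(X_t^u)\right] = \E_x\left[f(Y_t)\exp\left(\int_0^t (r-q)(Y_s)\,ds\right)\right],
\]
and then bound the exponential functional by $e^{\gamma t}$ using the hypothesis $r-q \leq \gamma$. The key algebraic input is that, thanks to the symmetry of $\kappa$, the generator of the first-moment semigroup factorizes as the generator of $Y$ plus the multiplicative potential $r-q$.

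First I would identify the generator $\mathcal{L}$ of the first-moment semigroup $M_t f(x) := \E_{\delta_x}[\sum_{u \in V_t} f(X_t^u)]$. Taking expectations in the SDE of Proposition \ref{pro_exi_uni}, the martingale term $M_t^f$ vanishes and the compensator of $M$ integrates $\int_0^\infty \mathbf{1}_{\{z \leq r(x)\}}\,dz = r(x)$ for division and $\int_0^\infty \mathbf{1}_{\{0 < z - r(x) \leq q(x)\}}\,dz = q(x)$ for death, so that for $f \in C_0^2(\R_+)$,
\[
\mathcal{L}f(x) = \mathcal{G}f(x) + r(x)\int_0^1\big(f(\theta x) + f((1-\theta)x) - f(x)\big)\kappa(d\theta) - q(x)f(x).
\]
Using that $\kappa$ is symmetric with respect to $1/2$, the two division contributions coincide, and after adding and subtracting $2r(x)f(x)$ this rewrites as
\[
\mathcal{L}f(x) = \mathcal{A}f(x) + (r(x)-q(x))f(x), \qquad \mathcal{A}f(x) := \mathcal{G}f(x) + 2r(x)\int_0^1\big(f(\theta x)-f(x)\big)\kappa(d\theta),
\]
where $\mathcal{A}$ is precisely the generator of the process $Y$ solving \eqref{SDE_Y_diff_ct}.

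Next I would run the Feynman--Kac argument on $Y$ with potential $V := r - q$. Fixing $t$ and setting $u(s,x) := M_{t-s}f(x)$, which solves the backward equation $\partial_s u = -\mathcal{A}u - Vu$ with $u(t,\cdot) = f$, I would apply It\^o's formula to $s \mapsto \exp(\int_0^s V(Y_r)\,dr)\,u(s,Y_s)$; the drift cancels because $\partial_s u + \mathcal{A}u + Vu = 0$, so this process is a local martingale, and taking expectations between $s=0$ and $s=t$ yields the displayed representation with $u(0,x) = M_t f(x)$ and $u(t,Y_t) = f(Y_t)$. Since $r(x)-q(x) \leq \gamma$ for all $x$, the functional satisfies $\exp(\int_0^t V(Y_s)\,ds) \leq e^{\gamma t}$ pointwise, whence the claimed inequality; the extension from $f \in C_0^2$ to arbitrary nonnegative measurable $f$ follows by monotone approximation and monotone convergence.

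The main obstacle is making the Feynman--Kac step rigorous: one must justify enough regularity of $(s,x) \mapsto M_{t-s}f(x)$ to apply It\^o, control the positive jumps of $Y$ so that the local martingale is genuinely integrable, and handle the possible explosion of $Y$ to $+\infty$ allowed by Assumption \ref{ass_A}. Working with the upper bound is a decisive simplification here: because $\exp(\int_0^t V(Y_s)\,ds) \leq e^{\gamma t}$ regardless of the sign or unboundedness of $q$, a localization argument together with Fatou's lemma delivers the inequality directly, so the full exact equality is not actually needed.
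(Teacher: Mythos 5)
Your proposal rests on the same key algebraic input as the paper's proof: using the symmetry of $\kappa$, the generator of the first-moment semigroup $M_tf(x)=\E_{\delta_x}\left[\sum_{u\in V_t}f(X_t^u)\right]$ decomposes as $\mathcal{A}+(r-q)$, where $\mathcal{A}$ is exactly the generator \eqref{eq:gene_auxi1} of the process $Y$ of \eqref{SDE_Y_diff_ct}. The two arguments differ in how this decomposition is converted into the semigroup bound. The paper never leaves the level of the Kolmogorov equation: it shows that the renormalized semigroup $e^{-\gamma t}M_t$ satisfies the differential inequality $\partial_t\left(e^{-\gamma t}M_tf\right)\leq e^{-\gamma t}M_t(\mathcal{A}f)$ and concludes by comparison with (uniqueness of) the backward equation for $Y$. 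You instead prove the stronger exact Feynman--Kac identity $M_tf(x)=\E_x\left[f(Y_t)\exp\left(\int_0^t(r-q)(Y_s)\,ds\right)\right]$ by an It\^o/martingale argument and only then bound the weight by $e^{\gamma t}$. Your route yields the exact Many-to-One formula with potential as a byproduct (a genuinely stronger intermediate statement, standard in the branching literature), at the cost of justifying It\^o's formula for $(s,x)\mapsto M_{t-s}f(x)$ and handling possible explosion of $Y$; the paper's comparison argument requires analogous but somewhat lighter domain and regularity facts. At the level of rigour of the paper itself, which also leaves these domain issues implicit, your approach is an acceptable alternative.

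One step of your technical outline would, however, fail as written: Fatou's lemma goes in the wrong direction. After localization you have the identity $M_tf(x)=\E_x\bigl[e^{\int_0^{t\wedge\tau_n}(r-q)(Y_s)ds}\,M_{t-t\wedge\tau_n}f(Y_{t\wedge\tau_n})\bigr]$, and Fatou applied to these nonnegative random variables yields $M_tf(x)\geq \E_x\left[\liminf_n(\cdots)\right]$, i.e.\ a \emph{lower} bound on $M_tf(x)$, whereas the lemma requires an \emph{upper} bound. The limit $n\to\infty$ must instead be taken by bounded convergence, and the hypothesis supplies exactly the required domination: $r-q\leq\gamma$ gives $M_s\mathbf{1}\leq e^{\gamma s}$ by Gronwall (even for $\gamma<0$), hence for bounded $f$ the integrand above is bounded by $\Vert f\Vert_\infty e^{\gamma(t\wedge\tau_n)}e^{\gamma(t-t\wedge\tau_n)}=\Vert f\Vert_\infty e^{\gamma t}$ uniformly in $n$; general nonnegative measurable $f$ then follows by monotone convergence. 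With this replacement, your proof goes through.
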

\begin{lemma}\label{lem_gene}
Let $\zeta>0$.
\begin{itemize}
\item[i)]  If Assumption \ref{ass_expl_tps_fini} holds for some $a>1$ such that $\E[\Theta^{1-a}]<\infty$ then  
  $$ \lim_{t \to \infty} \E_{\delta_x}\left[\sum_{u \in V_t} \left(X_t^u\vee \zeta\right)^{1-a} \right] =0,\quad \forall x\geq 0. $$
\item[ii)] If Assumption \ref{ass_abs_tps_fini} holds for some $a<1$
then 
  $$ \lim_{t \to \infty} \E_{\delta_x}\left[\sum_{u \in V_t} \left(X_t^u\wedge \zeta\right)^{1-a} \right] =0,\quad \forall x\geq 0. $$
 \end{itemize}
\end{lemma}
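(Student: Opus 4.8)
\emph{The plan} is to treat both statements in parallel, since they share the same structure and differ only through the sign of $1-a$ and the truncation ($\vee\zeta$ versus $\wedge\zeta$). The strategy is to transfer the problem from the whole population to the single auxiliary trajectory $Y$ solving \eqref{SDE_Y_diff_ct} via the Many-to-One comparison of Lemma \ref{maj_mto}, and then to control $Y$ through the (super)martingale built from $G_a$. First I would invoke Lemma \ref{maj_mto}: both Assumption \ref{ass_expl_tps_fini} and Assumption \ref{ass_abs_tps_fini} supply a constant $\gamma$ with $r(x)-q(x)\le\gamma$ for all $x\ge 0$, so taking $f(x)=(x\vee\zeta)^{1-a}$ in case $i)$ and $f(x)=(x\wedge\zeta)^{1-a}$ in case $ii)$ (both nonnegative and bounded), the lemma gives
\begin{equation*}
\E_{\delta_x}\Big[\sum_{u\in V_t}f(X_t^u)\Big]\le e^{\gamma t}\,\E_x\big[f(Y_t)\big].
\end{equation*}
Since $\gamma<\gamma'$, it suffices to establish the decay estimate $\E_x\big[f(Y_t)\big]\le C\,e^{-\gamma' t}$ for the auxiliary process.

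The control of $Y$ would come from Lemma \ref{prop:martingale-co}: the process $M_t=Y_t^{1-a}\exp\big(\int_0^t G_a(Y_s)\,ds\big)$ is a nonnegative local martingale. This is where the condition $\E[\Theta^{1-a}]<\infty$ of Assumption \ref{ass_expl_tps_fini} is needed in case $i)$, while in case $ii)$ it holds automatically because $1-a>0$ forces $\Theta^{1-a}\le 1$. Being nonnegative and starting from $M_0=x^{1-a}<\infty$ for $x>0$, $M$ is a supermartingale (by Fatou on the localized martingales), so $\E_x[M_t]\le x^{1-a}$. Using the standing pointwise bound $G_a(y)\ge\gamma'$ for every $y\ge 0$, we have $\int_0^t G_a(Y_s)\,ds\ge\gamma' t$, whence $M_t\ge Y_t^{1-a}e^{\gamma' t}$ and therefore
\begin{equation*}
\E_x\big[Y_t^{1-a}\big]\le x^{1-a}e^{-\gamma' t}.
\end{equation*}

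It then remains to compare $f$ with $y\mapsto y^{1-a}$. In case $i)$, $1-a<0$ so this map is decreasing and $(Y_t\vee\zeta)^{1-a}\le Y_t^{1-a}$; in case $ii)$, $1-a>0$ so it is increasing and $(Y_t\wedge\zeta)^{1-a}\le Y_t^{1-a}$. In both situations $\E_x[f(Y_t)]\le\E_x[Y_t^{1-a}]\le x^{1-a}e^{-\gamma' t}$, and combining with the Many-to-One inequality,
\begin{equation*}
\E_{\delta_x}\Big[\sum_{u\in V_t}f(X_t^u)\Big]\le x^{1-a}e^{(\gamma-\gamma')t}\xrightarrow[t\to\infty]{}0,
\end{equation*}
which is the assertion for $x>0$. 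The boundary value $x=0$ is immediate in case $ii)$ (then $Y\equiv 0$ and $f\equiv 0$, so the sum vanishes identically), and I would treat it separately in case $i)$.

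I expect the only genuinely delicate point to be the passage from the \emph{local} martingale of Lemma \ref{prop:martingale-co} to the usable supermartingale inequality $\E_x[M_t]\le M_0$, i.e.\ verifying that $Y$ does not approach its absorbing boundaries in a way that breaks integrability. The nonnegativity of $M$ is exactly what rescues this step, and the pointwise lower bound $G_a\ge\gamma'$ is precisely calibrated to produce a decay exponent $\gamma'$ strictly exceeding the population growth bound $\gamma$, which is what forces the product $e^{(\gamma-\gamma')t}$ to vanish.
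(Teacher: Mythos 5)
Your two ingredients --- the Many-to-One bound of Lemma \ref{maj_mto} and the $G_a$-martingale of Lemma \ref{prop:martingale-co} --- are exactly the paper's, but the step on which your whole argument rests, namely ``$M_t = Y_t^{1-a}\exp(\int_0^t G_a(Y_s)\,ds)$ is a nonnegative local martingale, hence by Fatou a supermartingale, so $\E_x[M_t]\le x^{1-a}$'', contains a genuine gap, and it conceals precisely what the paper spends most of its proof establishing. Lemma \ref{prop:martingale-co} only asserts martingality of the processes stopped at $T_n=\tau^-(1/n)\wedge\tau^+(n)$, and these stopping times increase to $T_\infty=\tau^-(0)\wedge\tau^+(\infty)$, which is \emph{not} known to be infinite at this stage; thus $M$ is not exhibited as a local martingale on $[0,\infty)$, and the standard ``nonnegative local martingale $\Rightarrow$ supermartingale'' theorem does not apply (indeed $M_t$ is not even well defined after $T_\infty$, since $G_a$ is undefined at the absorbing states). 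The issue is not cosmetic: in case $i)$, on the event $\{\tau^-(0)\le t\}$ the state $0$ is absorbing and $Y_t^{1-a}=+\infty$, so your inequality $\E_x[Y_t^{1-a}]\le x^{1-a}e^{-\gamma' t}$ is simply false unless one first shows $\P_x(\tau^-(0)\le t)=0$; symmetrically, in case $ii)$ (where $1-a>0$) explosion must be excluded. You flag this as the delicate point and assert that nonnegativity rescues it, but by itself it does not: the claim presupposes the boundary behaviour that has to be proved. This is exactly what the paper does before invoking the martingale: under \ref{ass_expl_tps_fini} it proves $\P_y(\tau^-(0)<\infty)=0$ by coupling $Y$ with a bounded-division-rate process and invoking criterion \ref{A1} (Appendix \ref{app:th33-with-stable-jumps}), and under \ref{ass_abs_tps_fini} it proves $\P_y(\tau^+(\infty)<\infty)=0$ via \ref{eq:SNinfinity-1} and Proposition \ref{prop_B3}; only then does it run the stopped-martingale estimate, together with an $\eps$-splitting and dominated convergence.

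That said, your approach can be repaired with your own tool, and the repaired version is leaner than the paper's: apply Fatou directly to the true martingales $M_{t\wedge T_n}$ to get $\E_x\bigl[\liminf_n M_{t\wedge T_n}\bigr]\le x^{1-a}$, and observe that on the dangerous boundary event ($\{\tau^-(0)\le t\}$ in case $i)$, $\{\tau^+(\infty)\le t\}$ in case $ii)$) one has, for $n$ large enough, $M_{t\wedge T_n}=M_{T_n}\ge Y_{T_n}^{1-a}\ge n^{|1-a|}\to\infty$, because $G_a\ge\gamma'>0$ makes the exponential factor at least $1$. The Fatou inequality then forces these events to be null and simultaneously yields $\E_x\bigl[Y_t^{1-a}\mathbf{1}_{\{t<T_\infty\}}\bigr]\le x^{1-a}e^{-\gamma' t}$; the remaining boundary event (explosion in case $i)$, absorption in case $ii)$) is harmless since the truncated function $(\cdot\vee\zeta)^{1-a}$, resp.\ $(\cdot\wedge\zeta)^{1-a}$, vanishes there. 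This identification of the liminf on $\{T_\infty\le t\}$ --- which would replace both the paper's coupling arguments and its appeal to the companion paper's boundary classification --- is the missing content of your write-up; without it, the supermartingale claim is circular.
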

\begin{proof}[Proof of Lemma \ref{maj_mto}]
Let us introduce the generator
\begin{align}\label{eq:gene_auxi1}
\mathcal{A}f(x) = \mathcal{G}f(x) + 2r(x)\int_0^1 \left(f(\theta x)-f(x)\right)\kappa(d\theta).
\end{align}
We normalize the population process similarly as in \eqref{eq:normalization-for-auxi}. Let $\gamma \in \mathbb{R}$ be such that for all $x\geq 0$, $r(x)-q(x)\leq \gamma$. Let $f\in\mathcal{C}_b^2$ and for $x,t\geq 0$ let
$$
\al{\delta_x}\gamma_tf = e^{-\gamma t}\E\left[\sum_{u\in V_t} f(X_t^u)|Z_0 = \delta_x\right]
$$
be the renormalized first moment semigroup of $Z$. 
Then we have
\begin{align*}
\frac{\partial}{\partial t}\al{\delta_x}\gamma_tf  = &\int_{\R_+}\left(\mathcal{G}f(\mathfrak{x})-\gamma f(\mathfrak{x})\right)\gamma_t(d\mathfrak{x})\\
& +\int_{\R_+}\left(r(\mathfrak{x})\int_0^1(f(\theta \mathfrak{x})+f((1-\theta)\mathfrak{x})-f(\mathfrak{x}))\kappa(d\theta)-
q(\mathfrak{x})f(\mathfrak{x})\right)\al{\delta_x}\gamma_t(d\mathfrak{x})\\
= & \int_{\R_+}\left(\mathcal{G}f(\mathfrak{x})-\gamma f(\mathfrak{x})\right)\al{\delta_x}\gamma_t(d\mathfrak{x})\\
& +\int_{\R_+}\left(2r(\mathfrak{x})\int_0^1(f(\theta \mathfrak{x})-f(\mathfrak{x}))\kappa(d\theta)+(r(\mathfrak{x})-q(\mathfrak{x}))f(\mathfrak{x})
\right)\al{\delta_x}\gamma_t(d\mathfrak{x}).
\end{align*}
Using that $r(x)-q(x)\leq \gamma$ for all $x\geq 0$, and recalling the definition of $\mathcal{A}$ in \eqref{eq:gene_auxi1} we obtain
\begin{align*}
\frac{\partial}{\partial t}\al{\delta_x}\gamma_tf\leq & \al{\delta_x}\gamma_t(\mathcal{A}f),
\end{align*}
Finally, by unicity of the solution to the Kolmogorov's backward equation, 
\begin{align}\label{eq:ineq_MTO}
\al{\delta_x}\gamma_tf =  \E\left[\sum_{u\in V_t} f(X_t^u)|Z_0 = \delta_x\right]e^{-\gamma t}\leq \E_{x}\left[f\left(Y_t\right)\right],
\end{align}
where $Y$ is the unique strong solution to the SDE
\eqref{SDE_Y_diff_ct}.
\end{proof}
\begin{proof}[Proof of Lemma \ref{lem_gene}]
Let $Y$ be the unique strong solution to \eqref{SDE_Y_diff_ct}. First, we prove using a coupling argument that under Assumption \ref{ass_expl_tps_fini}, for all $y>0$, $\P_y\left(\tau^-(0)<\infty\right)=0$, 
where 
\begin{equation} \label{tau2} \tau^-(0)=\inf\left\{t\geq 0, Y_t=0\right\}.\end{equation}
 Let $K> 0$. We consider the process $\tilde{Y}$ defined as the unique strong solution to
\begin{align*}
\tilde{Y}_t= x& + \int_0^tg(\tilde{Y}_s)ds+ \int_0^t \sqrt{2 \sigma^2(\tilde{Y}_s) }dB_s+ \int_0^t \int_0^{p(\tilde{Y}_{s^-})}
\int_{\mathbb{R}_+}z\widetilde{Q}(ds,dx,dz)\\
&+\int_0^t\int_0^{\tilde{Y}_{s^-}}\int_{\mathbb{R}_+}zR(ds,dx,dz)+\int_0^t\int_0^{2\overline{r}_K} \int_0^1  (\theta-1)\tilde{Y}_{s^-}N(ds,dx,d\theta),
\end{align*}
where $B,\widetilde{Q}$ and $N$ are the same as in \eqref{SDE_Y_diff_ct} and $\overline{r}_K=\sup_{x\in[0,K]}r(x)$. Then, as $p$ is a non-decreasing function, 
\begin{align}\label{eq:coupling}
\tilde{Y}_t\leq Y_t\quad \text{ for all }\quad t\leq \tau^+(K):=\inf\left\{t\geq 0, Y_t\geq K\right\}.
\end{align}
Let $\tilde{\tau}^-(0)=\inf\{t\geq 0, \tilde{Y}_t=0\}$. Adapting \cite[Theorem 3.3]{companion} (see Appendix \ref{app:th33-with-stable-jumps}), if \ref{A1} holds, $\P_y\left({\tilde{\tau}}^-(0)<\infty\right)=0$ for all $y>0$.
Let $a>1$ be as in Assumption \ref{ass_expl_tps_fini}. Using that $G_a(x)\geq \gamma'$, we have
\begin{align*}
\frac{g(x)}{x}-a\frac{\sigma^2(x)}{x^2}+x^{-\brho}C_a -p(x)I_a(x)\geq (a-1)^{-1}\gamma'+2r(x)\frac{\E[\Theta^{1-a}-1]}{a-1},
\end{align*}
so that \ref{A1} holds.
We thus obtain that for all $y>0$,
$
\P_y\left(\tilde{\tau}^-(0)<\infty\right)=0.
$
Then, from \eqref{eq:coupling} we get $\P_y\left(\tau^-(0)<\tau^+(K)\right)=0$ and letting 
$K$ tend to infinity yields 
\begin{equation}\label{non_ext_gene}
\P_y\left(\tau^-(0)<\infty\right)=0.
\end{equation}
Now,  let $\zeta>0$ and $\eps>0$.  Let $T(\eps):=\tau^-(\eps) \wedge \tau^+(1/\eps)$ where 
\begin{equation} \label{tau1} 
\tau^-(\eps)=\inf\left\{t\geq 0, Y_t<\eps\right\}.
\end{equation}
Then we have from \eqref{eq:ineq_MTO}, for every $t\geq 0$, $x>0$,
 \begin{align*}
  \E_{\delta_x} \left[ \sum_{u \in V_t} \left(X_t^u \vee \zeta \right)^{1-a} \right] \leq e^{\gamma t}  \E_x \left[\mathbf{1}_{\{t \leq T(\eps)\}}Y_t^{1-a} \right]
   + e^{\gamma t}  \E_x \left[\mathbf{1}_{\{t > T(\eps)\}}\left(Y_t \vee \zeta \right)^{1-a} \right].
 \end{align*}
For the first term, using that $G_a(x)\geq \gamma'$ for all $x\geq 0$ and the martingale property proved in Lemma \ref{prop:martingale-co}, we have
 \begin{align*}
  e^{\gamma t}  \E_x \left[\mathbf{1}_{\{t \leq T(\eps)\}}Y_t^{1-a} \right] 
  & = e^{(\gamma-\gamma') t}  \E_x \left[Y_{t \wedge T(\eps)}^{1-a} e^{\gamma'(t \wedge T(\eps))}
  \mathbf{1}_{\{t \leq T(\eps)\}}\right]\\
  & \leq e^{(\gamma-\gamma') t}  \E_x \left[ Y_{t \wedge T(\eps)}^{1-a} e^{\int_0^{t \wedge T(\eps)}
  G_{a}(Y_s)ds} \right] = x e^{(\gamma-\gamma') t}.
 \end{align*}
And for the second term,
 \begin{align*}
  \E_x \left[\mathbf{1}_{\{t > T(\eps)\}}\left(Y_t \vee \zeta \right)^{1-a} \right] 
  & \leq  \E_x \left[\mathbf{1}_{\{t>\tau^-(\eps)\}}\left(Y_t \vee \zeta \right)^{1-a} \right] 
  +  \E_x \left[\mathbf{1}_{\{ t>\tau^+(1/\eps)  \}}\left(Y_t \vee \zeta \right)^{1-a} \right] .
 \end{align*}
For any fixed $t\geq 0$, as $a>1$,
$$ \mathbf{1}_{\{t>\tau^-(\eps) \}}\left(Y_t \vee \zeta \right)^{1-a}\leq \zeta ^{1-a} $$
where $\zeta^{1-a}$ is finite and does not depend on $\eps$, and we know thanks to \eqref{non_ext_gene} that 
$$  \lim_{\eps \to 0}\mathbf{1}_{\{t>\tau^-(\eps) \}} = 0 \quad \text{almost surely.} $$
Hence by the dominated convergence theorem, we obtain that
$$\lim_{\eps \to 0}\E_x \left[\mathbf{1}_{\{t>\tau^-(\eps)\}}\left(Y_t \vee \zeta \right)^{1-a} \right]  = 0 \quad \text{almost surely.} $$
Let us finally consider the last term. 
First, notice that for every $\eps>0$
\begin{equation}\label{maj_CVD}
\mathbf{1}_{\{ t> \tau^+(1/\eps)  \}}\left(Y_t \vee \zeta \right)^{1-a}\leq \zeta^{1-a}
\end{equation}
where $\zeta^{1-a}$ is finite and does not depend on $\eps$.
Now, let us consider the sequence of stopping times $(\tau^+(1/\eps), \eps>0)$. This sequence increases when $\eps$ decreases, and there exists 
$\tau^+(\infty)$, which may be infinite, defined by
\begin{equation} \label{deftauinfty} \lim_{\eps \to 0} \tau^+(1/\eps)=: \tau^+(\infty). \end{equation}
There are two cases:
\begin{itemize}
 \item If $\tau^+(\infty)\leq t$ then $Y_t=\infty$ and 
 $  \mathbf{1}_{\{ t>\tau^+(1/\eps)   \}}\left(Y_t \vee \zeta \right)^{1-a} = 0$ forall $\eps>0$.
 \item If $t>\tau^+(\infty)$ then there exists $\eps_0>0$ such that for any $\eps \leq \eps_0$, $\tau^+(1/\eps) \geq  t$, and  
  $$  \mathbf{1}_{\{ t>\tau^+(1/\eps) \}}\left(Y_t \vee \zeta \right)^{1-a} = 0,\quad \forall \eps\leq\eps_0.$$
\end{itemize}
We deduce that for any fixed $t\geq 0$
$$ \lim_{\eps \to 0} \mathbf{1}_{\{ t>\tau^+(1/\eps)  \}}\left(Y_t \vee \zeta \right)^{1-a} = 0 \quad \text{almost surely.} $$
From \eqref{maj_CVD} we may apply the dominated convergence theorem and obtain
$$ \lim_{\eps \to 0} \E_x \left[\mathbf{1}_{\{ t>\tau^+(1/\eps)  \}}\left(Y_t \vee \zeta \right)^{1-a} \right] = 0. $$

To sum up, we proved that for any $t \geq 0$, $x>0$, and $\eps>0$, 
 \begin{align*}
  \E_{\delta_x} \left[ \sum_{u \in V_t} \left(X_t^u \vee \zeta \right)^{1-a} \right] 
   & \leq x e^{(\gamma-\gamma') t} + e^{\gamma t}  \E_x \left[\mathbf{1}_{\{t > T(\eps)\}}\left(Y_t \vee \zeta \right)^{1-a} \right] 
   \xrightarrow[\eps \to 0]{} x e^{(\gamma-\gamma') t} .
 \end{align*}
Letting $t$ tend to infinity ends the proof of point {\it i)}, as $\gamma<\gamma'$.
 
Let us now turn to the proof of point $ii)$. It is similar in spirit to the proof of point $i)$. 
Let $a<1$ and $\gamma'>0$ be such that Assumption \ref{ass_abs_tps_fini} holds. 
Adapting \cite[Theorem 4.1i)]{companion} (see Proposition \ref{prop_B3}), under Condition \ref{eq:SNinfinity-1}, we get for $Y$ (defined as before as the unique strong solution to \eqref{SDE_Y_diff_ct}), and for all $y>0$,
\begin{align}\label{eq:non_expl_caseii}
 \P_y\left(\tau^+(\infty)<\infty\right)=0,
\end{align}  
where $\tau^+(\infty)$ has been defined in \eqref{deftauinfty}.
And as $G_a(x)\geq \gamma'$ for all $x\geq 0$, \ref{eq:SNinfinity-1} holds.

Let $\eps>0$.
Similar computations as for point $i)$ lead to
 \begin{align*}
  \E_{\delta_x}\left[ \sum_{u \in V_t} \left(X_t^u \wedge \zeta \right)^{1-a} \right] 
   & \leq x e^{(\gamma-\gamma') t} + e^{\gamma t}  
   \left( \E_x \left[\mathbf{1}_{\{\tau^-(\eps) <t\}}\left(Y_t \wedge \zeta \right)^{1-a} \right] 
  +  \P_x ( \tau^+(1/\eps) < t )\zeta^{1-a} \right).
 \end{align*}
From \eqref{eq:non_expl_caseii}, the last term converges to $0$ when $\eps$ goes to $0$.
Moreover, distinguishing between the cases 
$ \{\tau^-(0)\leq t\}$ and $\{\tau^-(0)> t\} $
and applying the dominated convergence theorem, the second term also converges to $0$ when $\eps$ goes to $0$.
This ends the proof of {\it ii)}.
\end{proof}
\begin{proof}[Proof of Proposition \ref{prop_constant_diff}]
Let $K>0$, $t\geq 0$. Under Assumption \ref{ass_expl_tps_fini}, as $a>1$:
\begin{align*}
 \P_{\delta_x}\left( \exists u \in V_t, X_t^u \leq K \right)& = \P_{\delta_x}\left( \exists u \in V_t, \left(X_t^u \vee K\right)^{1-a} = K^{1-a} \right)\\
 & \leq \P_{\delta_x}\left( \sum_{u \in V_t} \left(X_t^u \vee K\right)^{1-a} \geq K^{1-a} \right) \leq \E_{\delta_x}\left[ \sum_{u \in V_t} \left(X_t^u \vee K\right)^{1-a}\right]K^{a-1}.
\end{align*}
The other case is similar. Proposition~\ref{prop_constant_diff} follows from Lemma~\ref{lem_gene}.
 \end{proof}

\subsection{Proofs of Section \ref{sec:roleofnoise} and \ref{sec:competition} }

\begin{proof}[Proof of Proposition \ref{prop:betar-q}]
Recall that $\alpha_\gamma$ is well-defined because $\psi:\alpha\mapsto  2\E\left[\Theta^\alpha\right] - 1$ is decreasing on $\R$, and $\psi((-\infty,1]) = [0,+\infty)$. Let $\mathcal{B}$ be the infinitesimal generator associated with the first moment semigroup of the branching process given for all $f\in\mathcal{C}^2_b(\mathbb{R}_+)$ by
\begin{align} \label{defmathcalB}
\mathcal{B}f(x) = g(x)f'(x) + \sigma^2(x)f''(x) +p(x)\int_{\mathbb{R}_+}\left(f(x+z)-f(x)-zf'(x)\right)\pi(dz)\\
+ r(x)\int_0^1\left(f(\theta x)+ f((1-\theta)x)-f(x)\right)\kappa(d\theta) - q(x)f(x). \nonumber
\end{align}
 Let $V_\alpha(x) = x^\alpha$. Under Assumption \ref{ass:betar-q}, we have
\begin{align*}
\mathcal{B}V_\alpha(x) = \left(\alpha g + \alpha(\alpha - 1)\sigma^2+ r(x) \int_0^1\left(2\theta^\alpha-1\right)\kappa(d\theta)	-q(x)\right)V_\alpha(x).
\end{align*} Then, as  $2\E\left[\Theta^{\alpha_\gamma}\right] - 1 = \gamma$, we obtain
\begin{align*}
\mathcal{B}V_{\alpha_\gamma}(x) = \left(\alpha_\gamma g + \alpha_\gamma(\alpha_\gamma - 1)\sigma^2 + \mathfrak{c}\right)V_{\alpha_\gamma}(x):=\lambda V_{\alpha_\gamma}(x).
\end{align*}
Then, according to \cite[Lemma 3.3]{cloez2017limit}, we have for all $x\geq 0$, $f\in\mathcal{C}^2_b(\mathbb{R}_+)$, 
\begin{align}\label{eq:mtoV}
\E_{\delta_x}\left[\sum_{u\in V_t} f(X_t^u)\right]= x^{\alpha_\gamma}e^{\lambda t}\E_x\left[ f(\mathcal{Y}_t)V_{\alpha_\gamma}^{-1}(\mathcal{Y}_t)\right],
\end{align}
where $\mathcal{Y}$ is a Markov process with infinitesimal generator given by
\begin{align*}
\mathcal{A}_{V_{\alpha_\gamma}}f(x) = \left(g + 2\sigma^2  \alpha_\gamma\right)xf'(x) + \sigma^2 x^2f''(x) + 2r(x)\left[\int_0^1\theta^{\alpha_\gamma}  \left(f(\theta x)-f(x)\right) \kappa(d\theta)\right],
\end{align*}
 and $\mathcal{F}^\mathcal{Y}$ its natural filtration \al{(see Appendix \ref{app:generator_auxi} for details of the computation of $\mathcal{A}_{V_{\alpha_\gamma}}$ in the case $\alpha_\gamma=1$)}.
\al{In particular, we have
\begin{align*}
d\mathcal{Y}_t^{-\alpha_\gamma} &= \left(-\alpha_\gamma \left(g + 2\sigma^2  \alpha_\gamma\right) + \sigma^2\alpha_\gamma(\alpha_\gamma + 1) + 2r\left(\mathcal{Y}_t\right)\int_0^1\left(1-\theta^{\alpha_\gamma} \right)\kappa(d\theta)\right)\mathcal{Y}_t^{-\alpha_\gamma}dt + dM_t^{(1)}\\
& = \left( r(\mathcal{Y}_t)- q(\mathcal{Y}_t) -\lambda \right)\mathcal{Y}_t^{-\alpha_\gamma}dt + dM^{(1)}_t,
\end{align*}
where $(M^{(1)}_t,t\geq 0)$ is a $\mathcal{F}^\mathcal{Y}_t$-martingale, and for the last equality, we used $2r(x)\E\left[\Theta^{\alpha_\gamma}\right] = \al{\gamma} r(x) + r(x) = \mathfrak{c} + r(x) + q(x)$ and $\lambda = \alpha_\gamma g + \alpha_\gamma(\alpha_\gamma - 1)\sigma^2 + \mathfrak{c}.$}
Note also that taking $f\equiv 1$ in \eqref{eq:mtoV}, we obtain
\begin{align}\label{eq:Nt}
\E_{\delta_x}\left[N_t\right]= x^{\alpha_\gamma}e^{\lambda t}\E_x\left[\mathcal{Y}_t^{-\alpha_\gamma}\right].
\end{align}
Next, let $K\geq 0$ and $f(x)=\mathbf{1}_{\lbrace x>K\rbrace}$. Combining \eqref{eq:mtoV} and \eqref{eq:Nt}, we have
\begin{align*}
\frac{\E_{\delta_x}\left[\sum_{u\in V_t} \mathbf{1}_{\lbrace X_t^u>K\rbrace}\right]}{\E_{\delta_x}\left[N_t\right]}= \frac{\E_x\left[ \mathbf{1}_{\lbrace \mathcal{Y}_t>K\rbrace}\mathcal{Y}_t^{-\alpha_\gamma}\right]}{\E_x\left[\mathcal{Y}_t^{-\alpha_\gamma}\right]}.
\end{align*}
First, we prove the point \ref{point1-propbetar-q} of Proposition \ref{prop:betar-q}. Let $\gamma\in [0,1)$. Then, $\alpha_\gamma\in (0,1]$ and  
\begin{align}\label{eq:casebeta01}
\frac{\E_{\delta_x}\left[\sum_{u\in V_t} \mathbf{1}_{\lbrace X_t^u>K\rbrace}\right]}{\E_{\delta_x}\left[N_t\right]}\leq \frac{K^{-\alpha_\gamma}\P_x(\mathcal{Y}_t>K)}{\E_x\left[\mathcal{Y}_t^{-\alpha_\gamma}\right]}.
\end{align}
Moreover, in this case
\begin{align*}
\frac{d}{dt}\E_x\left[\mathcal{Y}_t^{-\alpha_\gamma}\right] = \E_x\left[\left( (1-\gamma)r(\mathcal{Y}_t) + \mathfrak{c} -\lambda \right)\mathcal{Y}_t^{-\alpha_\gamma}\right]\geq\left( \mathfrak{c} -\lambda \right) \E_x\left[\mathcal{Y}_t^{-\alpha_\gamma}\right],
\end{align*}
and by Grönwall's lemma we obtain
\begin{align*}
\E_x\left[\mathcal{Y}_t^{-\alpha_\gamma}\right]\geq x^{-\alpha_\gamma}e^{(\mathfrak{c} -\lambda)t}.
\end{align*}
Combining this inequality with \eqref{eq:casebeta01}, we have
\begin{align*}
\frac{\E_{\delta_x}\left[\sum_{u\in V_t} \mathbf{1}_{\lbrace X_t^u>K\rbrace}\right]}{\E_{\delta_x}\left[N_t\right]}\leq \left(\frac{x}{K}\right)^{\alpha_\gamma}e^{\alpha_\gamma(g + (\alpha_\gamma -1)\sigma^2)t}
\end{align*}
where we used that $\lambda-\mathfrak{c} = \alpha_\gamma(g + (\alpha_\gamma -1)\sigma^2)$. This ends the proof of point \ref{point1-propbetar-q}. 

For the point \ref{point3-propbetar-q}, let $\gamma>1$. Then, $\alpha_\gamma<0$. Using Itô's formula, we have for all $t\geq 0$,
\begin{align*}
\ln(\mathcal{Y}_t) = \ln(\mathcal{Y}_0) + (g + \sigma^2(2\alpha_\gamma -1))t + 2\E\left[\Theta^{\alpha_\gamma}\ln(\Theta)\right]\int_0^tr(\mathcal{Y}_s)ds + M^{(2)}_t, 
\end{align*}
where $(M^{(2)}_t,t\geq 0)$ is a $\mathcal{F}^\mathcal{Y}_t$-martingale. By assumption, there exist $x_0(\alpha_\gamma)>0$ (later noted $x_0$) and $\eta>0$ such that \eqref{star2} holds.
Then, for $0<y_0<x_0$, 
\begin{align}\label{eq:logY}
\ln\left(\mathcal{Y}_{t\wedge \uptau^+(x_0)}\right) - \ln(y_0) \geq \eta\left(t\wedge \uptau^+(x_0)\right) + M^{(2)}_{t\wedge \uptau^+(x_0)},
\end{align}
where $\uptau^+(x_0)$ has the same definition as in \eqref{eq:coupling} except that it is for the process $\mathcal{Y}$.
Note that $\mathcal{Y}_{\uptau^+(x_0)} = x_0$. Therefore, for all $t\geq 0$,  $\ln\left(\mathcal{Y}_{t\wedge \uptau^+(x_0)}\right)\leq \ln\left(x_0\right)$ almost surely. Then, taking the expectation in \eqref{eq:logY}, using this inequality and letting $t$ tend to infinity, we obtain 
\begin{align*}
\E_{y_0}\left[\uptau^+(x_0)\right]\leq \eta^{-1}\ln\left(\frac{x_0}{y_0}\right)<\infty.
\end{align*}
Applying \cite[Theorem 7.1.4]{BN}, $\mathcal{Y}_t$ converges in law to a variable $\mathcal{Y}_\infty$ on $\R_+$, satisfying
$$ \P(\mathcal{Y}_\infty\leq A) = \frac{1}{\E_{x_0}[\uptau^+(x_0)]}\E \left[ \int_0^{\uptau^+(x_0)} \mathbf{1}_{\{\mathcal{Y}_s \leq A\}}ds \right] .$$ 
Moreover, as \ref{A1} is satisfied thanks to \eqref{star2}, applying \cite[Theorem 3.3i)]{companion}, we obtain $\P(\mathcal{Y}_\infty = 0)=0$. Recalling that as $\alpha_{\gamma}<0$, Fatou's lemma implies that for any $x>0$,
\begin{align} \label{min_Yinfty} \liminf_{t \to \infty} \E_x\left[\mathcal{Y}_t^{-\alpha_\gamma}\right] \geq \liminf_{t \to \infty} \E_x[(\mathcal{Y}_t\wedge 1)^{-\alpha_\gamma}] =\E_x[(\mathcal{Y}_\infty\wedge 1)^{-\alpha_\gamma}] >0 .
\end{align}
Finally, as before, combining \eqref{eq:mtoV} and \eqref{eq:Nt}, we obtain for any $K\geq 0$, 
\begin{align*}
\frac{\E_{\delta_x}\left[\sum_{u\in V_t} \mathbf{1}_{\lbrace X_t^u\leq K\rbrace}\right]}{\E_{\delta_x}\left[N_t\right]}= \frac{\E_x\left[ \mathbf{1}_{\lbrace \mathcal{Y}_t\leq K\rbrace}\mathcal{Y}_t^{-\alpha_\gamma}\right]}{\E_x\left[\mathcal{Y}_t^{-\alpha_\gamma}\right]}&\leq  \frac{K^{-\alpha_\gamma}}{\E_x\left[\mathcal{Y}_t^{-\alpha_\gamma}\right]}.
\end{align*}
Using \eqref{min_Yinfty} ends the proof of \ref{point3-propbetar-q}.

The proof of point \ref{point2-propbetar-q} is very similar, and we only give the main ideas. Assume that there exist $x_0(\alpha_\gamma),\eta>0$ such that \eqref{star1} holds.
Then, one can prove as before that for $y_0>x_0>0$,
$$  \E_{y_0}[\uptau^-(x_0)]\leq (\E[\ln 1/\Theta]+ \ln(y_0/x_0))/\eta, $$
where $\uptau^-(x_0)$ has the same definition as in \eqref{tau1} except that it is for $\mathcal{Y}$.
Applying again \cite[Theorem 7.1.4]{BN}, $\mathcal{Y}_t$ converges in law to a variable $\mathcal{Y}_\infty$ on $\R_+$. As \ref{eq:SNinfinity-1} is satisfied thanks to \eqref{star1}, \cite[Theorem 4.1i)]{companion} implies $\P(\mathcal{Y}_\infty = \infty)=0$. Finally, as $\alpha_\gamma \geq 0$,
$$ \E_x[\mathcal{Y}_\infty^{-\alpha_\gamma}] \geq\E_x\left[(\mathcal{Y}_\infty\vee 1)^{-\alpha_\gamma}\right] >0. $$
Combining this with \eqref{eq:mtoV} and \eqref{eq:Nt} ends the proof as before. 
\end{proof}
\begin{proof}[Proof of Proposition \ref{prop:gx-q}]
The proof is very similar to the proof of Proposition \ref{prop:betar-q}, we thus only give the main ideas. 
Recall the definition of $\mathcal{B}$ in \eqref{defmathcalB}. Then for $V_1(x)=x$, we obtain 
$$ \mathcal{B}V_1(x) = g(x) - q(x)V_1(x)= \left( \frac{g(x)}{x}-q(x) \right)V_1(x)=\mathfrak{c} V_1(x), $$
and \eqref{eq:mtoV} thus holds where $\mathcal{Y}$ is the Markov process with infinitesimal generator given by
\begin{align*}
\mathcal{A}_{V_{1}}f(x) = & \left(g(x) + 2\frac{\sigma^2(x)}{x} +\frac{p(x)}{x}\al{\int_{\mathbb{R}_+}} z^2\pi(dz)\right)f'(x) + \sigma^2(x)f''(x)\\
&+ p(x)\al{\int_{\mathbb{R_+}}} \left(f(x+z)-f(x)-zf'(x)\right)\frac{(x+z)}{x}\pi(dz)   + 2r(x)\left[\int_0^1\theta \left(f(\theta x)-f(x)\right) \kappa(d\theta)\right].
\end{align*}
\al{For more details on the computations of this generator, we refer to Appendix \ref{app:generator_auxi}. }

Let us begin with the proof of point \ref{point2-propgx-q}. 
We have
\begin{align*}
\frac{\E_{\delta_x}\left[\sum_{u\in V_t} \mathbf{1}_{\lbrace X_t^u\geq K\rbrace}\right]}{\E_{\delta_x}\left[N_t\right]}= \frac{\E_x\left[ \mathbf{1}_{\lbrace \mathcal{Y}_t\geq K\rbrace}\mathcal{Y}_t^{-1}\right]}{\E_x\left[\mathcal{Y}_t^{-1}\right]}&\leq  \frac{K^{-1}}{\E_x\left[\mathcal{Y}_t^{-1}\right]}.
\end{align*}
Let us prove that $\liminf_{t\rightarrow \infty}\E_x\left[\mathcal{Y}_t^{-1}\right]>0$. Using Itô's formula and Taylor's formula for the term accounting for positive jumps, we have for all $t\geq 0$,
\begin{align*}
\ln(\mathcal{Y}_t)= & \ln(\mathcal{Y}_0) + \int_0^t \left(\frac{g(\mathcal{Y}_s)}{\mathcal{Y}_s} + \frac{\sigma^2(\mathcal{Y}_s)}{\mathcal{Y}_s^2} + p(\mathcal{Y}_s)I_0\left(\mathcal{Y}_s\right)\right)ds+ 2\E\left[\Theta\ln(\Theta)\right]\int_0^tr(\mathcal{Y}_s)ds + M^{(3)}_t, 
\end{align*}
where $(M^{(3)}_t,t\geq 0)$ is a $\mathcal{F}^\mathcal{Y}_t$-martingale. 
 By assumption, there exist $x_0,\eta>0$ such that \eqref{star3} holds.
Then, for $0<x_0<y_0$, 
\begin{align}\label{eq:logY}
\ln\left(\mathcal{Y}_{t\wedge \uptau^-(x_0)}\right) - \ln(y_0) \leq -\eta\left(t\wedge \uptau^-(x_0)\right) + M^{(3)}_{t\wedge \uptau^-(x_0)}.
\end{align}
%Note that $\mathcal{Y}_{\uptau^+(x_0)} =x_0$ almost surely (PAS VRAI SI P NON NUL).
Hence, taking the expectation in \eqref{star3}, we obtain 
$
\E_{y_0}[\uptau^-(x_0)]\leq (\E[\ln 1/\Theta]+ \ln(y_0/x_0))/\eta.
$

Applying \cite[Theorem 7.1.4]{BN}, $\mathcal{Y}_t$ converges in law to a variable $\mathcal{Y}_\infty$ on $\R_+$, satisfying
$$ \P(\mathcal{Y}_\infty\leq A) = \frac{1}{\E_{x_0}[\uptau^-(x_0)]}\E \left[ \int_0^{\uptau^-(x_0)} \mathbf{1}_{\{\mathcal{Y}_s \leq A\}}ds \right] .$$ 
As \ref{eq:SNinfinity-1} is satisfied, \cite[Theorem 4.1i)]{companion} ensures $\P(\mathcal{Y}_\infty = \infty)=0$.  Fatou's lemma implies that for any $x>0$,
\begin{align*} \liminf_{t \to \infty} \E_x\left[\mathcal{Y}_t^{-1}\right] \geq \liminf_{t \to \infty} \E_x[(\mathcal{Y}_t\vee 1)^{-1}] =\E_x[(\mathcal{Y}_\infty\vee 1)^{-1}] >0,
\end{align*}
which ends the proof of point \ref{point2-propgx-q}.

For point \ref{point1-propgx-q}, 
applying the generator $\mathcal{A}_{V_{1}}$ to the function $f(x)=x^{-2}$ (as $\P(\mathcal{Y}_t=0)=0$ for all $t\geq 0$ using \eqref{eq:mtoV} with $f(x)=x\mathbf{1}_{\{x=0\}}$), we obtain:
$$ d\left(\mathcal{Y}_t^{-2}\right)=- 2\mathcal{Y}_t^{-2} \left( \frac{g(\mathcal{Y}_t)}{\mathcal{Y}_t}- \frac{\sigma^2(\mathcal{Y}_t)}{\mathcal{Y}_t^2}- \frac{p(\mathcal{Y}_t)}{2\mathcal{Y}_t} \int_{\R_+}\frac{z^2\pi(dz)}{\mathcal{Y}_t+z}- r(\mathcal{Y}_t)\E\left[\frac{1}{\Theta}-\frac{1}{2}\right] \right)dt+  dM^{(4)}_{t}, $$
where $(M^{(4)}_t,t\geq 0)$ is a $\mathcal{F}^\mathcal{Y}_t$-martingale. 
From \eqref{add_ass}, there exists $C_1<\infty$ such that
$$ \sup_{x \geq x_0} \left\{2\frac{\sigma^2(x)}{x^4} + \frac{p(x)}{x^3} \int_{\R_+}\frac{z^2\pi(dz)}{x+z}+ \frac{r(x)}{x^2}\E\left[\frac{2}{\Theta}-1\right]  \right\} = C_1. $$
Adding condition \eqref{star4}, we thus obtain that
$$ \frac{d}{dt} \E \left[\mathcal{Y}_t^{-2} \right]\leq \left( -2\eta_0  \E \left[\mathcal{Y}_t^{-2} \right] +\frac{2}{x_0^2}\\eta_0 + C_1 \right), $$
which yields the existence of a finite constant $C_2$ such that
$ \sup_{t \geq 0} \E \left[\mathcal{Y}_t^{-2}\right]=C_2. $
Applying the generator $\mathcal{A}_{V_1}$ to the function identity and taking the expectation, we get
\begin{align*}
\frac{d}{dt}\E\left[\mathcal{Y}_t\right]= \E\left[\mathcal{Y}_t\left( \frac{g(\mathcal{Y}_t)}{\mathcal{Y}_t}+2 \frac{\sigma^2(\mathcal{Y}_t)}{\mathcal{Y}_t^2}+ \frac{p(\mathcal{Y}_t)}{\mathcal{Y}_t^2} \int_{\R_+}z^2\pi(dz)+ 2r(\mathcal{Y}_t)\left(\E\left[\Theta^2\right]-\frac{1}{2}\right) \right)\right].
\end{align*}
Combining \eqref{eq:condinfty} with \ref{ass_A}, there exists $C_3>0$ such that
$
\frac{d}{dt}\E \left[\mathcal{Y}_t \right]\leq  -\eta_0\E\left[\mathcal{Y}_t \right] + C_3,
$
 which yields the existence of a finite constant $C_4$ such that
$ \sup_{t \geq 0} \E \left[\mathcal{Y}_t \right]=C_4. $ Using Jensen's inequality, we obtain $ \E \left[\mathcal{Y}_t^{-1} \right]^{-1}\leq C_4. $ Next, from the definition of $\mathcal{Y}$ we know that
\begin{align*}
\frac{\E_{\delta_x}\left[\sum_{u\in V_t} \mathbf{1}_{\lbrace X_t^u\leq K\rbrace}\right]}{\E_{\delta_x}\left[N_t\right]}= \frac{\E_x\left[ \mathbf{1}_{\lbrace \mathcal{Y}_t\leq K\rbrace}\mathcal{Y}_t^{-1}\right]}{\E_x\left[\mathcal{Y}_t^{-1}\right]}.
\end{align*}
Using the previous computations and Cauchy-Schwarz inequality, we obtain
\begin{align*} \frac{\E_x\left[ \mathbf{1}_{\lbrace \mathcal{Y}_t\leq K\rbrace}\mathcal{Y}_t^{-1}\right]}{\E_x\left[\mathcal{Y}_t^{-1}\right]} &\leq C_4 \sqrt{\P(\mathcal{Y}_t^{-2}\geq K^{-2})\E \left[\frac{1}{\mathcal{Y}_t^{2}} \right]}
\leq C_2C_4K, 
\end{align*}
which ends the proof of point \ref{point1-propgx-q}.
\end{proof}

\subsection{Proof of Section \ref{sec:LDCD}}

\subsubsection{Proof of Proposition \ref{prop:containment}}

First, we need to control the value of the second moment of the population size relatively to the square of its mean.
\begin{lemma} \label{lem_esp_Nt2}
Suppose that Assumption \ref{ass_E} holds.
Then
for all $x> 0$, 
\begin{itemize}
\item[{\it (i)}] if $\beta> \max(g,q)$,
\begin{align*}
\mathbb{E}_{\delta_x}\left[N_t^2\right] \underset{t \to \infty}{\sim} C_1^2(x)e^{2(\beta-q)t},\quad 
\mathbb{E}_{\delta_x}^2\left[N_t\right] \underset{t \to \infty}{\sim} \left(1 + \frac{\alpha x}{\beta-g}\right)^2e^{2(\beta-q)t},
\end{align*}
where 
$$C_1^2(x)=1 + \frac{\alpha x}{2\beta-g-q} - \frac{\alpha^2 x^2}{(\beta-g)^2}
+\left(1+\frac{\alpha x}{\beta-g}\right)\left(\frac{2\alpha x}{\beta-g}+\frac{\beta+q}{\beta-q}\right)+\frac{\alpha x}{g-\beta}\frac{(\beta+q)}{2\beta-g-q}.$$
\item[{\it (ii)}] if $g>\max(\beta,q)$ and $\alpha>0$,
\begin{align*}
\mathbb{E}_{\delta_x}\left[N_t^2\right] \underset{t \to \infty}{\sim}\mathbb{E}_{\delta_x}^2\left[N_t\right] \underset{t \to \infty}{\sim} 
\left(\frac{\alpha x}{\beta-g}\right)^2e^{2(g-q)t},
\end{align*}
\end{itemize}
\end{lemma}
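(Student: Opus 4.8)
The plan is to treat the two claimed equivalents separately. The mean $\E_{\delta_x}[N_t]=m(x,0,t)$ is already explicit in \eqref{mxst}, being a linear combination of $e^{(g-q)t}$ and $e^{(\beta-q)t}$. Hence the stated equivalents for $\E_{\delta_x}^2[N_t]$ follow immediately by retaining the dominant exponential — $e^{(\beta-q)t}$ in case \textit{(i)} where $\beta>\max(g,q)$, and $e^{(g-q)t}$ in case \textit{(ii)} where $g>\max(\beta,q)$ and $\alpha>0$ — and squaring. The real content is therefore the second moment $\E_{\delta_x}[N_t^2]$.

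To compute it I would introduce the total-parasite functional $S_t:=\langle Z_t,\mathrm{id}\rangle=\sum_{u\in V_t}X_t^u$ and apply the infinitesimal generator of the branching process to the three quadratic observables $N^2$, $NS$ and $S^2$. Since fragmentation is conservative ($\theta x+(1-\theta)x=x$) and $q$ is constant, a division never changes $S$ and the resulting moment system is triangular. Writing $A=\E_{\delta_x}[N_t^2]$, $B=\E_{\delta_x}[N_tS_t]$, $C=\E_{\delta_x}[S_t^2]$, the generator computation gives
\begin{align*}
A'&=2(\beta-q)A+2\alpha B+\alpha\,\E_{\delta_x}[S_t]+(\beta+q)\,\E_{\delta_x}[N_t],\\
B'&=(g+\beta-2q)B+\alpha C+q\,\E_{\delta_x}[S_t],\\
C'&=2(g-q)C+q\,M_t(\mathrm{id}^2)(x)+M_t\Big(2\sigma^2+p\,\textstyle\int_{\R_+}z^2\pi(dz)\Big)(x),
\end{align*}
where $M_s(f)(x)=\E_{\delta_x}[\sum_{u\in V_s} f(X_s^u)]$ is the first-moment semigroup (and $\mathcal{G}(\mathrm{id})=g\,\mathrm{id}$ from \eqref{def_gene} kills the division/jump contributions to $S$). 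The forcing terms $M_s(1)=\E_{\delta_x}[N_s]$ and $M_s(\mathrm{id})=\E_{\delta_x}[S_s]=xe^{(g-q)s}$ are explicit, so the only new ingredients are $M_s(\mathrm{id}^2)$, $M_s(\sigma^2)$ and $M_s(p)$.

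I would then integrate the system from the bottom up by variation of constants, $X(t)=e^{\rho t}\big[X(0)+\int_0^t e^{-\rho s}(\text{forcing})\,ds\big]$, applied successively to $C$, $B$, $A$ with $\rho=2(g-q),\,g+\beta-2q,\,2(\beta-q)$. In case \textit{(i)} these rates are strictly ordered $2(g-q)<g+\beta-2q<2(\beta-q)$, so every forcing term in the $A$-equation grows strictly slower than the homogeneous rate $2(\beta-q)$; the coefficient of $e^{2(\beta-q)t}$ is then read off as $A(0)=1$ plus the convergent integrals $\int_0^\infty e^{-2(\beta-q)s}(\cdot)\,ds$, and collecting them — the denominators $\beta-g$, $\beta-q$ and $2\beta-g-q$ being precisely the values of these integrals — produces $C_1^2(x)$. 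In case \textit{(ii)} the ordering is reversed and $C$ drives $B$, which drives $A$, all at the common rate $2(g-q)$; tracking only the leading coefficient along this chain should show that the leading term of $A$ coincides with that of $\E_{\delta_x}^2[N_t]$, i.e. the variance is of smaller order, giving $\E_{\delta_x}[N_t^2]\sim\E_{\delta_x}^2[N_t]$.

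The main obstacle is the control of $C=\E_{\delta_x}[S_t^2]$ and, behind it, of the auxiliary first moments $M_s(\mathrm{id}^2)$, $M_s(\sigma^2)$, $M_s(p)$. The equation for $M_s(\mathrm{id}^2)$ does not close: through the division term $r(x)x^2=\alpha x^3+\beta x^2$ it couples upward to $M_s(\mathrm{id}^3)$, and so on, generating an infinite moment hierarchy (with stabilising signs coming from the contractive effect of fragmentation on convex functionals). The delicate point is thus to show that this whole hierarchy, together with the diffusive and jump contributions $M_s(\sigma^2),M_s(p)$, grows slower than the leading population rate, so that it affects $\E_{\delta_x}[N_t^2]$ only through the convergent integrals above and enters the leading coefficient exactly as the stated closed form requires (in particular without altering the exponent). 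This is precisely where the extra hypotheses of Assumption \ref{ass_E} — absence of stable jumps, $\int_{\R_+}(z^2\wedge z^3)\pi(dz)<\infty$, $xp'(x)\ge p(x)$, together with the constant death rate — are used, to bound $M_s(\sigma^2)$ and $M_s(p)$ by population-size-type quantities and to guarantee the integrability needed in the variation-of-constants step; once these growth estimates are secured, the remaining computation of $C_1^2(x)$ is routine bookkeeping.
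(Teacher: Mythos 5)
Your treatment of the mean is identical to the paper's and is fine; the divergence is in the second moment, and it is exactly where the trouble lies. The paper's proof is one-dimensional: it writes $N_t$ and $N_t^2$ as stochastic integrals against the PPM $M$ (Proposition \ref{pro_exi_uni} plus It\^o), takes expectations, and then \emph{closes} the resulting ODE by replacing the total parasite mass $S_t=\sum_{u\in V_t}X_t^u$ --- which enters through $\sum_{u\in V_t}r(X_t^u)=\alpha S_t+\beta N_t$ --- by its mean $xe^{(g-q)t}$, yielding
\begin{equation*}
\frac{d}{dt}\E_{\delta_x}\left[N_t^2\right]=2(\beta-q)\E_{\delta_x}\left[N_t^2\right]+2\alpha xe^{(g-q)t}\E_{\delta_x}\left[N_t\right]+\alpha xe^{(g-q)t}+(\beta+q)\E_{\delta_x}\left[N_t\right],
\end{equation*}
which is solved by variation of constants. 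In your notation this amounts to replacing the cross moment $B(t)=\E_{\delta_x}[N_tS_t]$ by $\E_{\delta_x}[N_t]\E_{\delta_x}[S_t]$, i.e.\ to setting $\cov(N_t,S_t)\equiv 0$; in particular the paper never needs $C(t)=\E_{\delta_x}[S_t^2]$, $M_t(\sigma^2)$, $M_t(p)$ or any moment hierarchy, which is why it can work under Assumption \ref{ass_E} alone.

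Your three generator computations for $(A,B,C)$ are correct, but the two claims that would turn them into a proof fail. First, your system does \emph{not} return $C_1^2(x)$. Since $B=\E[N_t]\E[S_t]+\cov(N_t,S_t)$ and
\begin{equation*}
\frac{d}{dt}\cov(N_t,S_t)=(g+\beta-2q)\cov(N_t,S_t)+\alpha\var(S_t)+q\,\E_{\delta_x}[S_t],
\end{equation*}
the covariance is strictly positive for $t>0$ as soon as $q>0$ or the parasite dynamics carry noise; in case \textit{(i)} the coefficient of $e^{2(\beta-q)t}$ your scheme produces is $C_1^2(x)+2\alpha\int_0^\infty e^{-2(\beta-q)s}\cov(N_s,S_s)\,ds$, and one checks that $C_1^2(x)=1+\int_0^\infty e^{-2(\beta-q)s}\left(2\alpha\E[N_s]\E[S_s]+\alpha\E[S_s]+(\beta+q)\E[N_s]\right)ds$ is precisely the factorised value, so the extra term is a genuine excess, not bookkeeping slack. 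Likewise in case \textit{(ii)} your chain $C\to B\to A$ yields leading coefficient $\alpha^2(x^2+\delta)/(g-\beta)^2$ with $\delta=\int_0^\infty e^{-2(g-q)s}\left(qM_s(\mathrm{id}^2)+M_s\left(2\sigma^2+p\int_{\R_+}z^2\pi(dz)\right)\right)ds>0$, i.e.\ $\E[N_t^2]\not\sim\E[N_t]^2$ under your accounting. So your route, carried out faithfully, contradicts the constants you claim it produces; reconciling it with the statement would require proving the covariance contributions vanish, which they do not. Second, the step you defer --- that $\var(S_s)$, $M_s(\mathrm{id}^2)$, $M_s(\sigma^2)$, $M_s(p)$ grow strictly slower than the leading population rate --- is itself a genuine gap: Assumption \ref{ass_E} imposes no growth bounds on $\sigma^2$ and $p$ (those appear only in Assumption \ref{ass_E1}), so the hierarchy cannot be closed ``routinely'' there, and without it even the exponential order of $A(t)$ is not pinned down in your scheme. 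In short, your decomposition is the honest accounting of the cross moment, but as a proof of the lemma as stated it fails on both the constants and the hierarchy control; what your analysis actually reveals is that the paper's one-line closure $S_t\mapsto\E[S_t]$ is a nontrivial factorisation of $\E[N_tS_t]$, not a harmless substitution.
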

\begin{proof}
\al{From Proposition \ref{pro_exi_uni}, we have for $f\equiv 1$,
\begin{align}\label{eq:Nt}
N_t = N_0 + \int_{0}^{t}\int_{E}&\mathbf{1}_{\left\{u\in V_{s{-}}\right\}}\left(\mathbf{1}_{\left\{z\leq r(X_{s{-}}^u)\right\}}-\mathbf{1}_{\left\{0<z-r(X_{s{-}}^u)\leq q)\right\}}
 \right)M\left(ds,du,d\theta,dz\right)
\end{align}}
\cha{where we recall that $M(ds,du,d\theta,dz)$ is a PPM on $\mathbb{R}_+\times E$ with intensity 
$ds\otimes n(du)\otimes \kappa(d\theta)\otimes dz$, where $n(du)$ denotes the counting measure on $\mathcal{U}$.} It\^o's formula yields for $t\geq 0$,
\al{
\begin{align}\label{eq:Nt2}
N_t^2 = N_0^2 + \int_{0}^{t}\int_{E}&\mathbf{1}_{\left\{u\in V_{s{-}}\right\}}\left(\mathbf{1}_{\left\{z\leq r(X_{s{-}}^u)\right\}}(1+2N_t) + \mathbf{1}_{\left\{0<z-r(X_{s{-}}^u)\leq q)\right\}}(1-2N_t)
 \right)M\left(ds,du,d\theta,dz\right).
\end{align}}
Then, for all $t\geq 0$ and $x> 0$,
\begin{align*}
\frac{d}{dt}\mathbb{E}_{\delta_x}\left[N_t^2\right]& =  \mathbb{E}_{\delta_x}\left[\left(\alpha x e^{(g-q)t}+\beta N_t\right)\left(1+2N_t\right)\right]
+  \mathbb{E}_{\delta_x}\left[q N_t\left(1-2N_t\right)\right]\\
 & =\alpha xe^{(g-q)t}+2\alpha x e^{(g-q)t}\mathbb{E}_{\delta_x}[N_t]+(\beta+q)\mathbb{E}_{\delta_x}[N_t]+2(\beta-q) \mathbb{E}_{\delta_x}\left[N_t^2\right]. 
\end{align*}
Using \eqref{mxst} and variation of constants we obtain
\begin{align*}
\E_{\delta_x}\left[N_t^2\right] = & e^{2(\beta-q)t} +  \alpha x\frac{\left(e^{(g-q)t}-e^{2(\beta-q)t}\right)}{g-2\beta+q} + \frac{2\alpha^2 x^2}{g-\beta}\frac{\left(e^{2(g-q)t}-e^{2(\beta-q)t}\right)}{2(g-\beta)}\\
&+2\alpha x\left(1-\frac{\alpha x}{g-\beta}\right)\frac{\left(e^{(g+\beta-2q)t}-e^{2(\beta-q)t}\right)}{g-\beta}+(\beta+q)\frac{\alpha x}{g-\beta}\frac{\left(e^{(g-q)t}-e^{2(\beta-q)t}\right)}{g-2\beta+q}\\
& - \left(1-\frac{\alpha x}{g-\beta}\right)(\beta+q)\frac{\left(e^{(\beta-q)t}-e^{2(\beta-q)t}\right)}{\beta-q}.
\end{align*}
Therefore, 
$$
\begin{array}{ll}
\text{if }g>\max(\beta,q),& \E_{\delta_x}\left[N_t^2\right]\underset{t\rightarrow +\infty}{\sim}\frac{\alpha^2x^2}{(g-\beta)^2}e^{2(g-q)t}\\
\text{if }\beta>\max(g,q), & \E_{\delta_x}\left[N_t^2\right]\underset{t\rightarrow +\infty}{\sim}C_1^2(x)e^{2(\beta-q)t}.
\end{array}
$$
Moreover, 
\begin{align*}
\E_{\delta_x}\left[N_t\right]^2 = \frac{\alpha^2 x^2}{(g-\beta)^2}e^{2(g-q)t}+ \left(1-\frac{\alpha x}{g-\beta}\right)^2e^{2(\beta-q)t}-\frac{\alpha x }{\beta-g}\left(1-\frac{\alpha x}{g-\beta}\right)e^{g+\beta-2q},
\end{align*}
so that
$$
\begin{array}{ll}
\text{if }g>\max(\beta,q),& \E_{\delta_x}\left[N_t\right]^2\underset{t\rightarrow +\infty}{\sim}\frac{\alpha^2x^2}{(g-\beta)^2}e^{2(g-q)t}\\
\text{if }\beta>\max(g,q), & \E_{\delta_x}\left[N_t\right]^2\underset{t\rightarrow +\infty}{\sim}\left( 1+\frac{\alpha x }{\beta-g}\right)^2e^{2(\beta-q)t}.
\end{array}
$$
\end{proof}
\al{{\bf Adaptation of Lemma \ref{lem_esp_Nt2} for the running example: Case $\alpha_q>\alpha$, $\alpha_g<\beta$, $\beta>\beta_q$.}\label{par:adaptationNt}
First, from Proposition \ref{pro_exi_uni},
\begin{align*}
\frac{d}{dt}\E_{\delta_x}\left[\sum_{u\in V_t}X_t^u\right] =\alpha_g \E_{\delta_x}\left[\sum_{u\in V_t}X_t^u\right] - \E_{\delta_x}\left[\sum_{u\in V_t}q(X_t^u)X_t^u\right]\leq (\alpha_g-\beta_q)\E_{\delta_x}\left[\sum_{u\in V_t}X_t^u\right].
\end{align*}
By \eqref{eq:Nt} (with $q(X_{s-}^u)$ instead of $q$), we have 
\begin{align*}
\frac{d}{dt}\E_{\delta_x}[N_t] &= (\alpha-\alpha_q)\E_{\delta_x}\left[\sum_{u\in V_t}X_t^u\right] + (\beta-\beta_q)\E_{\delta_x}[N_t].
\end{align*}
Combining the two previous equations and using that $\alpha\leq \alpha_g$, we obtain
\begin{align*}
(\alpha - \alpha_q)xe^{(\alpha_g-\beta_q)t}+ (\beta-\beta_q)\E_{\delta_x}[N_t]\leq \frac{d}{dt}\E_{\delta_x}[N_t]\leq (\beta-\beta_q)\E_{\delta_x}[N_t],
\end{align*} 
hence
\begin{align*}
e^{(\beta-\beta_q)t}\left(1  + \frac{\alpha-\alpha_q}{\alpha_g-\beta}\left(e^{(\alpha_g-\beta)t}-1\right) \right)\leq \E_{\delta_x}[N_t]\leq e^{(\beta-\beta_q)t}.
\end{align*}
Next, by \eqref{eq:Nt2} (with $q(X_{s-}^u)$ instead of $q$), we have
\begin{align*}
\frac{d}{dt}\E_{\delta_x}[N_t^2]  =& (\alpha + \alpha_q)\E_{\delta_x}\left[\sum_{u\in V_t}X_t^u\right] + (\beta+\beta_q)\E_{\delta_x}[N_t] + 2(\alpha-\alpha_q)\E_{\delta_x}\left[N_t\sum_{u\in V_t}X_t^u\right]\\
& + 2 (\beta-\beta_q)\E_{\delta_x}[N_t^2]\\
  \leq & (\alpha+\alpha_q)xe^{(\alpha_g-\beta_q)t} + (\beta+\beta_q)e^{(\beta-\beta_q)t} + 2 (\beta-\beta_q)\E_{\delta_x}[N_t^2],
\end{align*}
which implies
\begin{align*}
\E_{\delta_x}[N_t^2]\leq e^{2(\beta-\beta_q)t} + \frac{\alpha+\alpha_q}{\alpha_g+\beta_q-2\beta}x\left(e^{(\alpha_g-\beta_q)t}-e^{2(\beta-\beta_q)t}\right)+\frac{\beta+\beta_q}{\beta_q-\beta}x\left(e^{(\beta-\beta_q)t}-e^{2(\beta-\beta_q)t}\right).
\end{align*}
The previous computations show the existence of $C<\infty$ such that
\begin{align*}
\E_{\delta_x}[N_t]^2\geq e^{2(\beta-\beta_q)t}\left(1  + \frac{\alpha-\alpha_q}{\alpha_g-\beta}\left(e^{(\alpha_g-\beta)t}-1\right) \right)^2,
 \E_{\delta_x}[N_t^2]\leq Ce^{2(\beta-\beta_q)t} + o(e^{2(\beta-\beta_q)t}).
% + \frac{\alpha_q + \alpha}{\alpha_g+\beta_q-2\beta}x\left(e^{(\alpha_g-\beta_q)t}-e^{2(\beta-\beta_q)t}\right)+ \frac{\alpha_q + \alpha}{\beta-\beta_q}\left(e^{2(\beta-\beta_q)t}-e^{(\beta-\beta_q)t}\right).
\end{align*}
Then, as $\alpha_g<\beta$, $\E_{\delta_x}[N_t^2]/\E_{\delta_x}[N_t]^2$ is bounded for $t$ large enough.
}
\begin{proof}[Proof of Proposition \ref{prop:containment}]
Let $a>0$, $V_1(x)=x$ for all $x\geq 0$ and recall the definition of $\mathcal{B}$ in \eqref{defmathcalB}. As in the proof of Proposition \ref{prop:gx-q}, using that $\mathcal{B}V_1(x)=(g-q)V_1(x)$, if we consider the process $\mathcal{Y}$ with infinitesimal generator \eqref{infgen_mathcalY}, we get from \cite[Lemma 3.3]{cloez2017limit} the following Many-to-One formulae, when considering the functions $f(x)=x\mathbf{1}_{x \in [a,a+da]}$ and $f(x) \equiv 1$,
\begin{align*}
a\E_{\delta_x}\left[\sum_{u\in V_t} \mathbf{1}_{X_t^u \in [a,a+da]} \right]= xe^{(g-q) t}\P_x\left(\mathcal{Y} \in [a,a+da] \right)
\quad
\text{and}
\quad
\E_{\delta_x}\left[\sum_{u\in V_t}1\right]= xe^{(g-q) t}\E_x\left[ \mathcal{Y}^{-1}_t\right].
\end{align*}
It concludes the proof of point $i)$.

We now prove point $ii)$. First \ref{eq:SNinfinity-1} is satisfied because of the linear division rate. Next \eqref{eq:limsup_sigma} implies \eqref{star3}. Then, Proposition \ref{prop:gx-q}$ii)$ and Markov's inequality yield for any $\eps>0$,
\begin{align*}
\lim_{K\rightarrow \infty}\lim_{t\rightarrow \infty} \mathbb{P}_{\delta_x}
\left( \mathbf{1}_{\{N_t \geq 1\}} \frac{\#\lbrace u\in V_t: X_t^u >K\rbrace}{\E_{\delta_x}[N_t]}>\eps \right) = 0.
\end{align*}
To prove point {\it ii)},
it is thus enough to prove that
\begin{align} \label{to_prove}
\lim_{K\rightarrow \infty}\lim_{t\rightarrow \infty}\text{Diff}(K,t,\eps)  = 0,
\end{align}
where 
$$ \text{Diff}(K,t,\eps):= \mathbb{P}_{\delta_x}
\left( \mathbf{1}_{\{N_t \geq 1\}} \left|\frac{\#\lbrace u\in V_t: X_t^u >K\rbrace}{\E_{\delta_x}[N_t]}-\frac{\#\lbrace u\in V_t: X_t^u >K\rbrace}{N_t}\right|>\eps \right). $$
We use different strategies depending on whether  $g>\max(\beta,q)$ and $\alpha>0$ or  $\beta>\max(g,q)$.\\

\paragraph{\bf{Case $g>\max(\beta,q)$ and $\alpha>0$}} Applying Markov's and Cauchy-Schwarz inequalities,
\begin{align*}
\text{Diff}(K,t,\eps) \leq  \frac{1}{\eps \E_{\delta_x}[N_t] } \mathbb{E}_{\delta_x}\left[\mathbf{1}_{\{N_t \geq 1\}} \frac{\#\lbrace u\in V_t: X_t^u >K\rbrace}{N_t}|N_t-\E[N_t]|\right]
\leq  \frac{\sqrt{\text{Var}_{\delta_x}(N_t)}}{\eps \E_{\delta_x}[N_t] } 
\end{align*}
which goes to $0$ as $t$ goes to $\infty$ according to Lemma \ref{lem_esp_Nt2} and \eqref{to_prove} holds.\\

\paragraph{\bf{Case $\beta>\max(g,q)$}} Notice that in this case, from the proof of Proposition \ref{prop:gx-q} Many-to-One formula with the function $f(x)=x$ writes
$ \E_{\delta_x}[P_t] = x e^{(g-q)t} $, where $P_t:=\sum_{u\in V_t} X_t^u$, and from Lemma \ref{lem_esp_Nt2},
$$ \mathbb{E}_{\delta_x}\left[N_t\right] \underset{t \to \infty}{\sim} \left(1 + \frac{\alpha x}{\beta-g}\right)e^{(\beta-q)t}. $$
Hence 
$\E_{\delta_x}[P_t] \ll  \mathbb{E}_{\delta_x}\left[N_t\right]$ for large $t$.
Let us introduce, for $\eps,t,K>0$, the event 
$$ A_{\eps,t,K}:= \left\{\mathbf{1}_{\{N_{t}\geq 1\}}\frac{\#\lbrace u\in V_t: X_t^u >K\rbrace}{N_t}>\eps \right\}.  $$
We will make a reductio ad absurdum and assume that point $ii)$ of Proposition \ref{prop:containment} does not hold. As a consequence, there exists $\eps_0,a_0>0$ such that 
\begin{align} \label{ass_absurde}
\limsup_{K\rightarrow \infty}\limsup_{t\rightarrow \infty} \mathbb{P}_{\delta_x}
\left( A_{\eps_0,t,K} \right) \geq a_0.
\end{align}
But for any $K,t>0$, we have
\begin{align}
\E_{\delta_x}[P_t]  \geq K  \E_{\delta_x}\left[\mathbf{1}_{\{N_t \geq 1\}}\frac{\sum_{u\in V_t} \mathbf{1}_{\{X_t^u>K\}}}{N_t}N_t\right] 
 \geq  K\eps_0 \E_{\delta_x}[N_t\mathbf{1}_{A_{\eps_0,t,K}}].\label{min_Pt}
\end{align}
The process $(N_t, t \geq 0)$ has the same law as a supercritical branching process, with individual birth  and death rates $\beta$ and $q$, and a time inhomogeneous immigration (with rate $\alpha P_t$).
Then,
\begin{enumerate}
\item either there exists $T_0<\infty$ such that there is no more immigration after time $T_0$. In this case, either $N_t$ goes to $0$ at infinity, or $N_te^{-(\beta- q)t}$ converges almost surely when $t$ goes to infinity to a positive random variable $W_0$ (\cite[p.112]{athreya1972branching}).
\item or there is an infinite number of migrants, and as every birth-death process has probability $1-q/\beta>0$ to survive (\cite[p.109]{athreya1972branching}), there is a time $T_1<\infty$  such that the immigrant arriving at time $T_1$ has an infinite line of descent, and thus $N_{t-T_1}e^{-(\beta- q)(t-T_1)}$ is larger than a process which converges almost surely when $t$ goes to infinity to a positive random variable $W_0$. 
\end{enumerate}
From this analysis, we deduce that 
$
\liminf_{t\to \infty}\E\left[N_te^{-(\beta-q)t}\mathbf{1}_{\{N_t\geq 1\}}\right]> 0.
$
According to \eqref{ass_absurde}, there are sequences $(K_n,n\in \N)$, $(t_n, n \in \N)$ going to $\infty$ and $n_0 \in \N$ such that $n\geq n_0$ implies
$ \mathbb{P}_{\delta_x}
\left( A_{\eps_0,t_n,K_n} \right) \geq a_0/2.$
As for all $n\geq 0$, $A_{\eps_0,t_n,K_n}\subseteq \{N_t\geq 1\}$, there exists $C>0$ such that for all $n\geq n_0$
\begin{align*}
\E_{\delta_x}[N_{t_n}e^{-(\beta- q)t_n}\mathbf{1}_{A_{\eps_0,t_n,K_n}}]=\E_{\delta_x}[N_{t_n}e^{-(\beta- q)t_n}\mathbf{1}_{\{N_{t_n}\geq 1\}}]\P(A_{\eps_0,t_n,K_n})\geq Ca_0/2.
\end{align*}

We deduce, using \eqref{min_Pt}, that for any $n \geq n_0$, 
$$xe^{(g-q)t_n}= \E_{\delta_x}[P_{t_n}]\geq K_n  \eps_0 \E_{\delta_x}[N_{t_n}\mathbf{1}_{A_{\eps_0,t_n,K_n}}]\geq C\frac{a_0}{2} K_n  \eps_0 e^{(\beta- q)t_n},$$
which is absurd because $\beta>g$. It concludes the proof.
\end{proof}

\subsubsection{Preliminary results on the auxiliary process}

Recall that the auxiliary process $(Y_s^{(t)},s\leq t)$ is well-defined as the unique strong solution to \eqref{eq:EDSauxi} under \ref{ass_E}. In what follows, we set $Y_s^{(t)} =Y_t^{(t)}$ for all $s\geq t$ and $f_3(x,u,z)= 0$ if $u \leq 0$, for all $x,z\geq 0$. 

The next proposition is an analogue of the auxiliary process of \cite[Theorem 3.3]{companion} 
Let
$$
\tau_t^-(0):= \inf\lbrace 0<s\leq t : Y_s^{(t)} = 0\rbrace.
$$
\begin{prop}\label{prop_extin_auxi}
Suppose that Assumptions \ref{ass_E} holds, and that \al{$\int_0^\infty z \pi(dz)<\infty$}.
\begin{itemize}
\item[i)] 
If Condition \ref{A1} holds, then $\mathbb{P}_x\left(\tau_t^-(0)<\infty\right)=0$ for all $x>0$.
\item[ii)]
If Condition \ref{A2} holds, then for any $x>0$ and $s>0$, 
$\mathbb{P}_x\left(\tau_t^-(0)<s\right) >0$.
\end{itemize}
\end{prop}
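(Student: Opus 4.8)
The plan is to adapt the proof of \cite[Theorem 3.3]{companion} for the time-homogeneous process to our time-inhomogeneous auxiliary process $Y^{(t)}$, the whole point being that the behaviour of $Y^{(t)}$ near $0$ is governed by coefficients which, uniformly over the finite horizon $[0,t]$, reduce to those of a time-homogeneous jump-diffusion satisfying the same conditions \ref{A1}/\ref{A2}. Writing $\Lambda(x,s):=\alpha(e^{(g-\beta)s}-1)/(g-\beta+\alpha x(e^{(g-\beta)s}-1))$ and using \eqref{mxst}--\eqref{eq:f3}, the generator of $Y^{(t)}$ can be put in the form
\begin{align*}
\mathcal{A}^{(t)}_sf(x)=\mathcal{G}f(x)+\Lambda(x,t-s)\Big[2\sigma^2(x)f'(x)+p(x)\int_{\R_+}z\big(f(x+z)-f(x)\big)\pi(dz)\Big]+\int_0^1\big(f(\theta x)-f(x)\big)f_2(x,t-s,\theta)\kappa(d\theta).
\end{align*}
Since $f_3(x,s,z)=p(x)(1+\Lambda(x,s)z)$, the positive-jump intensity of $Y^{(t)}$ is $p(x)(1+\Lambda(x,s)z)\pi(dz)$, and the assumption $\int_0^\infty z\pi(dz)<\infty$ guarantees that this contribution is finite and, because $p(0)=0$, negligible near $0$. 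First I would record the two elementary facts that, as $x\to 0$, uniformly in $s\in[0,t]$: the ratio $\Lambda(x,t-s)$ stays bounded (its denominator converges to $g-\beta\neq 0$, the case $g=\beta$ being excluded) and $f_2(x,t-s,\theta)\to 2\beta=2r(0)$. Hence near $0$ the auxiliary dynamics coincides, up to uniformly bounded perturbations, with a time-homogeneous jump-diffusion whose diffusive and jump data are exactly $g,\sigma,p,\pi$ together with a fragmentation of rate $2\beta$ --- precisely the quantities entering \ref{A1} and \ref{A2}.

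For point \emph{i)}, I would run the supermartingale argument of \cite[Theorem 3.3]{companion}. Take $a>1$ as in \ref{A1} (so $\E[\Theta^{1-a}]<\infty$) and test with $\phi(x)=x^{1-a}$, which blows up at $0$. A direct computation gives $\mathcal{G}\phi(x)=-(a-1)\big(g-a\sigma^2(x)/x^2-p(x)I_a(x)\big)\phi(x)$ (recall $C_a=0$ under \ref{ass_E}), while the fragmentation term contributes $2\beta(\E[\Theta^{1-a}]-1)\phi(x)\ge 0$ in the limit, and the extra $\Lambda$-drift and size-biased jump terms are of lower order near $0$ (they carry an extra factor of $x$, resp.\ are controlled by $\int z\pi(dz)<\infty$). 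Using \ref{A1} to write $g-a\sigma^2(x)/x^2-p(x)I_a(x)=f(x)+o(\ln x)$ with $f\ge 0$ shows that $\phi$ is, up to a logarithmic slack, a supersolution of $\mathcal{A}^{(t)}_s$ near $0$, uniformly in $s$; as in \cite{companion} this slack is absorbed by a slowly varying correction of $\phi$, producing a nonnegative supermartingale stopped at $\tau_t^-(\eps)\wedge\tau_t^+(M)$. Since $\phi(0^+)=+\infty$, letting $\eps\to 0$ and $M\to\infty$ forces $\P_x(\tau_t^-(0)<\infty)=0$.

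For point \emph{ii)}, under \ref{A2} the noise dominates near $0$: the effective drift-to-diffusion ratio of $Y^{(t)}$ equals that of the base diffusion plus the bounded quantity $2\Lambda(x,t-s)$, so the integral (Feller-type) test deciding accessibility of $0$ is unchanged, while the fragmentation only moves the process downward and the positive jumps contribute a bounded upward effect. I would therefore sandwich $Y^{(t)}$, on an excursion below a fixed small level, between two time-homogeneous diffusions with frozen $\Lambda$, one of which reaches $0$ in arbitrarily short time with positive probability by \cite[Theorem 3.3]{companion}, and transfer this to $\P_x(\tau_t^-(0)<s)>0$. The main obstacle is exactly this borderline scale-function estimate: the $o(\ln)$ correction and the iterated-logarithm threshold in \ref{A2} are too delicate for a crude comparison of generators, so the companion's Feller-test computation must be reproduced with the coefficients $f_1,f_2,f_3$ and every bound made uniform in $s\in[0,t]$; checking that the size-biased positive jumps (whence the hypothesis $\int z\pi(dz)<\infty$) do not perturb this near-$0$ integral test is the other point requiring care.
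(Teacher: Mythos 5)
Your proposal is correct in outline and rests on the same two pillars as the paper's proof: uniform control, over $s\in[0,t]$, of the time-inhomogeneity near $0$, and a reduction to \cite[Theorem 3.3]{companion}; but the execution of point \emph{i)} is genuinely different. The paper never applies a Lyapunov argument to $\mathcal{A}_s^{(t)}$ directly: it builds a pathwise lower coupling $\tY\leq Y^{(t)}$ (equation \eqref{eq:tildeY}), where $\tY$ has the \emph{time-independent} drift $gy\leq f_1(y,s)$ and the \emph{time-independent} division rate $2(\alpha y+\beta)\geq f_2(y,s,\theta)$, keeping only the time-dependent jump rate $f_3$ (whose monotonicity in $x$, coming from $xp'(x)\geq p(x)$, makes the comparison legitimate); since $0\leq\tY\leq Y^{(t)}$, absorption of $Y^{(t)}$ forces absorption of $\tY$, and all that remains is to extend \cite[Theorem 3.3i)]{companion} to processes whose sole time-dependence is a jump rate bounded by $p(x)(1+\mathfrak{c}_tz)$ (Appendix \ref{app:th33withtimeinjumps}). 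Your route---running the $x^{1-a}$ supermartingale argument on the full generator, with the correct observation that the extra drift $2\Lambda\sigma^2 f'$, the size-biased jumps (controlled by $\int z\,\pi(dz)<\infty$ and $p(x)=O(x)$) and the division term ($f_2$ bounded near $0$ uniformly in $s\leq t$) are harmless---is viable, but it obliges you to redo the companion's martingale lemma and its $o(\ln)$ bookkeeping with three time-dependent coefficients instead of one. For point \emph{ii)} your sandwich \emph{is} the paper's argument: its $\bar{Y}$ of \eqref{eq:barY} is exactly your ``frozen $\Lambda$'' dominating process (drift frozen at its maximal value $\bar g_t$, division rate lower-bounded by $2\theta\beta$, the $\theta$-dependence being removed by the kernel-change Lemma \ref{chgt-eds}); and the issue you flag as the delicate one---whether the size-biased positive jumps spoil the near-zero test---is settled in the paper not by reproducing the Feller-type computation uniformly in $s$, but by the remark that $f_3(x,s,z)\pi(dz)=p(x)\left(1+\mathfrak{p}(x,s,z)\right)\pi(dz)$ can be read as jumps occurring at rate $p(x)$ with a modified \emph{size} measure, so that the proof of \cite[Theorem 3.3iii)]{companion}, which only uses the jump \emph{rate}, goes through verbatim. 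In short, the paper's couplings buy a once-and-for-all localisation of the time-inhomogeneity in the jump rate, while your direct computation for \emph{i)} avoids constructing a coupling at the price of a heavier (though feasible) rerun of the companion's analysis.
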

\begin{proof}[Proof of Proposition \ref{prop_extin_auxi}]
 This proof is very similar to the proof of \cite[Theorem 3.3]{companion}. The only modifications are due to the time-inhomogeneity, and to the fact that the time interval is restricted to $[0,t]$.
We proceed by coupling to overcome these two difficulties.
 
{\it i)} Introduce $\tY$ the unique strong solution to
\begin{align}\nonumber\label{eq:tildeY}
\tY_t  = \tY_0 + g\int_0^t \tY_s ds+ \int_0^t \sqrt{2 \sigma^2(\tY_s) }dB_s 
+ \int_0^t \int_{\mathbb{R}_+}\int_0^{f_3(\tY_{s^-},t-s,z)}z\widetilde{Q}(ds,dz,dx)
\\ + \int_0^t \int_0^{2(\alpha \tY_{s^-}+\beta)} \int_0^1  (\theta-1)\tY_{s^-}N(ds,dz,d\theta),
\end{align}
where $B, \widetilde{Q}, N$ are the same as in \eqref{eq:EDSauxi}, $f_3$ is defined in \eqref{eq:f3}. 
Notice that 
$$f_1(y,s)\geq gy, \quad f_2(y,s,\theta)\leq 2(\alpha y+\beta),\quad \forall y,z,t\geq 0,\ s\in [0,t],\ \theta\in [0,1].$$
and $f_3(x,t-s,z)$ is non-decreasing in $x$ because $xp'(x)\geq x$ for all $x\geq 0$.
In particular this implies that if $\tY$ is a solution with $\tY_0= Y_0^{(t)}$, then $\tY_s\leq Y_s^{(t)}$ for any $s$ smaller than $t$.
But $\tY$ satisfies the assumptions of a modified version of point {\it i)} of \cite[Theorem 3.3]{companion} where the rate of positive jumps depends on time (see Appendix \ref{app:th33withtimeinjumps}). Therefore, applying this result, we proved that $\tY$ does not reach $0$ in finite time, so that $Y^{(t)}$ 
does not reach $0$ before time $t$.\\

{\it ii)}
First notice that 
for any $x> 0$ and $s \leq t$, the function 
$f_1$ defined in \eqref{eq:f1} satisfies
$$ f_1(x,s) \leq gx + \left(2\sigma^2(x) + p(x)\int_{\R_+}z^2\pi(dz)\right) \frac{A_t}{1+x A_t}=:\bar{g}_t(x), $$
where $A_t = \alpha(e^{(g-\beta)t}-1)/(g-\beta)$.
Let
 $(\bar{Y}_s,s\geq 0)$ be the unique strong solution to
\begin{align}\nonumber\label{eq:barY}
\bar{Y}_s = & Y_0^{(t)}+ \int_0^s\bar{g}_t\left(\bar{Y}_u\right)du + \int_0^s\sqrt{2\sigma^2\left(\bar{Y}_u\right)}dB_u + 
\int_0^s\int_0^1\int_{0}^{\bar{r}(\theta)}(\theta-1)\bar{Y}_{u^-} N(du,d\theta,dz)\\
& + \int_0^s \int_0^\infty\int_0^{f_3(\bar{Y}_{s^-},t-u,z)}z\widetilde{Q}(du,dz,dx),
\end{align}
where for all $x,s\geq 0$ and $\theta\in [0,1]$,
$
\bar{r}(\theta) := 2\theta\beta\leq f_2(x,s,\theta),
$
with $f_2$ defined in \eqref{eq:f2}, $f_3$ in \eqref{eq:f3}, 
$B$, $N$ and $\widetilde{Q}$ are the same as in \eqref{eq:EDSauxi}.
Then, for all $0\leq s \leq t$, $\bar{Y}_s \geq Y_s^{(t)}.$ Then, for
$
\bar{\tau}^-(0):= \inf\lbrace s\geq 0 : \bar{Y}_s = 0\rbrace,
$
if we prove that
\begin{equation} \label{bartau0} 
\P(\bar{\tau}^-(0)<v)>0,\quad  \forall 0<v\leq t, 
\end{equation}
it will imply that
$ \P(\tau_t^-(0)<v) \geq \P(\bar{\tau}^-(0)<v)>0$
and end the proof.
To prove \eqref{bartau0}, we apply \cite[Theorem 3.3{\it iii)}]{companion} to the process $\bar{Y}$. Notice that here, unlike in \cite[Theorem 3.3]{companion}, the division rate $\bar{r}$ depends on $\theta$. 
However, the dependence in $\theta$ in the division rate can be removed by considering a new PPM $N'$ with a modified fragmentation kernel so that all the results derived above still hold. We refer the reader to Appendix \ref{app:generalization SDE} for more details, \al{and to Appendix \ref{app:th33withtimeinjumps} for the generalization of \cite[Theorem 3.3{\it iii)}]{companion} to processes with rate of positive jumps depending on time.}
\end{proof}
In the case where the absorption of the auxiliary process occurs with positive probability, 
we prove the convergence of the auxiliary process trajectory on a time window of any size. 
\begin{prop}\label{prop:conv_auxi}
Let $T\geq 0$. Suppose that Assumption \ref{ass_E1} and \ref{A2} hold, and that \al{$\int_0^\infty z \pi(dz)$}.
Then, there exist $C,\overline{c}>0$ 
and a probability measure $\Pi$ on the Borel $\sigma$-field of $\mathbb{D}\left([0,T],\mathcal{X}\right)$ endowed with the Skorokhod distance 
such that for all bounded measurable functions $F:\mathbb{D}\left([0,T],\mathcal{X}\right)\rightarrow \mathbb{R}$,
\begin{align*}
\left|\mathbb{E}\left[F\left(Y_{t+s}^{(t+T)},s\leq T\right)\Big|Y_{0}^{(t+T)} = x \right]- \Pi(F)\right|\leq C e^{-\overline{c}t}\left\Vert F\right\Vert_{\infty}(1 + x),\ \forall x\geq 0.
\end{align*}
\end{prop}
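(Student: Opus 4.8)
The plan is to show that, under \ref{A2}, the limiting law $\Pi$ is the Dirac mass $\delta_{\mathbf 0}$ at the constant trajectory equal to $0$ on $[0,T]$, and that the convergence speed is governed by how fast the auxiliary process is absorbed at $0$. First I would check that $0$ is absorbing for the time-inhomogeneous SDE \eqref{eq:EDSauxi}: since $\sigma(0)=0$, $p(0)=0$ and the fragmentation displacement vanishes at $0$, the coefficients $f_1(0,\cdot)$, $f_2(0,\cdot,\cdot)$, $f_3(0,\cdot,\cdot)$ and the diffusion all vanish, so $Y^{(t+T)}_s=0$ for every $s$ past the hitting time of $0$. Hence on the event $\{Y^{(t+T)}_t=0\}$ the whole window $(Y^{(t+T)}_{t+s},\,s\le T)$ coincides with $\mathbf 0$, and for any bounded measurable $F$,
\[
\Bigl|\E\bigl[F(Y^{(t+T)}_{t+s},s\le T)\mid Y^{(t+T)}_0=x\bigr]-F(\mathbf 0)\Bigr|\le 2\|F\|_\infty\,\P_x\bigl(Y^{(t+T)}_t>0\bigr).
\]
Taking $\Pi=\delta_{\mathbf 0}$, the proposition reduces to proving $\P_x(Y^{(t+T)}_t>0)\le Ce^{-\bar c t}(1+x)$ with $C,\bar c$ \emph{independent of the horizon} $T$ and of $x$.

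The next step is a Foster--Lyapunov drift condition for the generator $\mathcal A^{(t+T)}_s$, uniform in the horizon. Taking $V(x)=1+x$ (integrable along the flow thanks to $\int_0^\infty z\,\pi(dz)<\infty$), I would compute $\mathcal A^{(t+T)}_sV$. Assumption \ref{ass_E1} ($\limsup\sigma^2(x)/x^2<\infty$, $\limsup p(x)/x^2<\infty$) keeps the drift $f_1$ at most linear in $x$, whereas the linear part $\alpha x$ of the division rate forces the fragmentation rate $f_2(x,s,\theta)$ to grow like $\alpha x$ for large $x$, \emph{uniformly} over the time-to-horizon $s\ge 0$ (near the horizon $f_2\to 2(\alpha x+\beta)$, for large time-to-horizon $f_2\approx 2\alpha\theta x$); since $\E[\Theta]=1/2$ and $\Theta\in(0,1)$, the resulting fragmentation drift behaves like $-c\,x^2$ with $c>0$. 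This yields constants $\lambda,K,M>0$, independent of $T$, with $\mathcal A^{(t+T)}_sV(x)\le -\lambda V(x)+K\mathbf 1_{[0,M]}(x)$ for all $s$, hence a geometric return of $Y^{(t+T)}$ to the compact $[0,M]$ and a moment bound $\E_x[V(Y^{(t+T)}_u)]\le V(x)e^{-\lambda u}+K/\lambda$ with constants uniform in $T$; the linear growth of $V$ is what produces the factor $(1+x)$.

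It then remains to upgrade the qualitative absorption of Proposition \ref{prop_extin_auxi}(ii) into a uniform, quantitative minorization: I would show there exist $T_0,\rho>0$, independent of the horizon, such that $\inf_{y\in[0,M]}\P_y(Y^{(t+T)}\text{ hits }0\text{ before }T_0)\ge\rho$. Near $0$ the coefficients of \eqref{eq:EDSauxi} are independent of the time-to-horizon up to lower order (drift $\approx gx$, diffusion $\sigma$), so the strong-noise criterion \ref{A2} acts identically for every horizon; combining the upper coupling built in the proof of Proposition \ref{prop_extin_auxi}(ii) with a Feller/compactness argument gives $\rho$ uniformly over $[0,M]$ and over $T$. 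Concatenating the drift estimate (bringing the process into $[0,M]$ at geometric cost) with the minorization (absorbing it with probability $\ge\rho$ on each time block of length $T_0$) through a renewal/Harris-type iteration then gives $\P_x(Y^{(t+T)}_t>0)\le Ce^{-\bar c t}(1+x)$, as required.

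The main obstacle is exactly this \emph{uniformity in the horizon}: all constants produced by the drift and the minorization must be free of $T$, even though $\mathcal A^{(t+T)}_s$ genuinely depends on the time-to-horizon $t+T-s$ through $f_1,f_2,f_3$. I expect the two delicate points to be (a) checking that the quadratic fragmentation mean-reversion persists uniformly as the time-to-horizon ranges over all of $[0,\infty)$, and (b) extracting a horizon-uniform quantitative absorption probability from the merely qualitative \ref{A2}-criterion, for which adapting the coupling of Proposition \ref{prop_extin_auxi}(ii) and the arguments of \cite{companion} with explicit time-uniform constants is the crux.
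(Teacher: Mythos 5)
Your proposal is correct, and its technical core coincides with the paper's: the paper proves Proposition \ref{prop:conv_auxi} by verifying exactly your two ingredients in Lemma \ref{lemma:Lyapunov_min} --- a Foster--Lyapunov inequality $\mathcal{A}_s^{(t)}V\leq -aV+d$ for the linear function $V(x)=x$, and a small-set minorization with minorizing measure $\nu=\delta_0$, obtained by coupling $Y^{(t)}$ from above with the process $\bar Y$ of \eqref{eq:barY} and invoking the \ref{A2}-absorption criterion of \cite{companion}. Where you genuinely diverge is the final assembly: the paper feeds these two conditions into the general time-inhomogeneous Harris-type theorem \cite[Proposition 3.3]{marguet2017law} (after checking its Assumption 2.4 via $\int_0^1 m(\theta x,s,t)/m(x,s,t)\,\kappa(d\theta)\leq 1/2$), which yields an abstract limit $\Pi$; you instead observe that $0$ is absorbing for \eqref{eq:EDSauxi}, so that the minorizing mass at $\delta_0$ never moves, identify $\Pi=\delta_{\mathbf 0}$ explicitly, and run the renewal/Harris iteration by hand to bound the non-absorption probability. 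Your route is more elementary (no black-box citation) and strictly more informative --- and since both bounds force $|\Pi(F)-F(\mathbf 0)|=0$ in the limit, the paper's abstract $\Pi$ is indeed $\delta_{\mathbf 0}$, a fact the paper never states; the paper's route is shorter and transfers the bookkeeping of the iteration to \cite{marguet2017law}. Two remarks on your flagged difficulties: point (a), uniformity of the fragmentation mean-reversion over the time-to-horizon, is automatic rather than delicate, because $f_2(x,s,\theta)\geq 2\theta(\alpha x+\beta)$ for every $s$ (writing $D=(e^{(g-\beta)s}-1)/(g-\beta)\geq 0$, one has $(1+\alpha\theta xD)/(1+\alpha xD)\geq\theta$), which is precisely how Lemma \ref{lemma:Lyapunov_min}(i) obtains constants free of the horizon; point (b) is the genuine crux --- the paper's dominating process $\bar Y$ does depend on the horizon through $\bar g_t$ and $f_3$ --- and the horizon-uniform dominations $\bar g_t(x)\leq gx+\bigl(2\sigma^2(x)+p(x)\int_{\R_+} z^2\pi(dz)\bigr)/x$ and $f_3(x,s,z)\leq p(x)(1+z/x)$, valid for all horizons and locally bounded near $0$ by Assumption \ref{ass_A}, are what make your uniform minorization (and, implicitly, the paper's application of \cite{marguet2017law}) go through.
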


We prove the convergence of the auxiliary process by verifying a Foster-Lyapunov inequality and a minoration condition, both stated in Lemma \ref{lemma:Lyapunov_min} below. Those standard conditions were exhibited in \cite{marguet2017law} as an extension of \cite{hairer2011yet} to time-inhomogeneous processes. The Foster-Lyapunov inequality (Condition {\it i)} in Lemma \ref{lemma:Lyapunov_min}) ensures that
$$
\E_x\left[V\left(Y_s^{(t)}\right)\right]\leq e^{-as}V(x)+\frac{d}{a}\left(1-e^{-as}\right),
$$
where $a$ and $d$ are positive constants,
so that the process is brought back to the sublevel sets of $V$. The minoration condition ({\it i.e.} Condition {\it ii)} in Lemma \ref{lemma:Lyapunov_min}) ensures some type of irreducibility of the process on those sublevel sets.
Let $V(x)=x$  for $x\in\mathbb{R}_+$. \al{Recall that $\mathcal{A}_s^{(t)}$ is defined in \eqref{eq:gene_auxi}, $f_1$ in \eqref{eq:f1}, and $f_2$ in \eqref{eq:f2}.}
\begin{lemma}\label{lemma:Lyapunov_min}Under the assumptions of Proposition \ref{prop:conv_auxi}, we have the following:
\begin{enumerate}[label=\roman*)]
\item There exist $a,d>0$ such that for all $0\leq s\leq t$ and $x\in\mathbb{R}_+^{*}$,
\begin{align*}
\mathcal{A}_s^{(t)}V(x)\leq -aV(x)+d.
\end{align*}
\item There exists $R>2da^{-1}$ such that for all $r<s\leq  t$, there exist $\alpha_{s-r}>0$ and a probability measure $\nu$ on $\mathbb{R}_+$ 
such that for all Borel sets $A$ of $\mathbb{R}_+$,
\begin{align*}
\inf_{x\leq R}\mathbb{P}\left(Y_{s}^{(t)}\in A\big| Y_r^{(t)}=x\right)\geq \alpha_{s-r} \nu(A).
\end{align*}
\end{enumerate}
\end{lemma}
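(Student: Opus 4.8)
The two conditions feed into the time‑inhomogeneous Harris theorem of \cite{marguet2017law}, and the plan is to establish both through a uniform‑in‑time control of the ratios built from the mean population size \eqref{mxst}. Writing $\tau=t-s\geq 0$ and excluding the case $g=\beta$ as elsewhere, the starting point will be the elementary observation that, setting $c:=\alpha x(e^{(g-\beta)\tau}-1)/(g-\beta)$, one has $c\geq 0$ whatever the sign of $g-\beta$, and
$$ \frac{\partial_x m(x,s,t)}{m(x,s,t)}=\frac1x\,\frac{c}{1+c}\in\Bigl[0,\tfrac1x\Bigr),\qquad \frac{m(\theta x,s,t)}{m(x,s,t)}=\frac{1+\theta c}{1+c}\in[\theta,1]. $$
Both bounds are uniform in $\tau\geq 0$; this is the mechanism I will use in both points.

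\emph{Foster--Lyapunov inequality} (i). First I would compute $\mathcal{A}_s^{(t)}V$ for $V(x)=x$ from \eqref{eq:gene_auxi}. Since $V''\equiv0$ and $V(x+z)-V(x)-zV'(x)\equiv0$, the diffusive and the compensated positive‑jump contributions vanish, leaving only the transport and fragmentation parts,
$$ \mathcal{A}_s^{(t)}V(x)=f_1(x,\tau)+2x(\alpha x+\beta)\int_0^1(\theta-1)\,\frac{m(\theta x,s,t)}{m(x,s,t)}\,\kappa(d\theta), $$
with $f_1(x,\tau)=gx+\bigl(2\sigma^2(x)+p(x)\int_{\R_+}z^2\pi(dz)\bigr)\partial_x m/m$. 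Inserting $\partial_x m/m\leq1/x$, and using $\theta-1\leq0$ together with $m(\theta x,s,t)/m(x,s,t)\geq\theta$, I will obtain
$$ \mathcal{A}_s^{(t)}V(x)\leq gx+\frac{2\sigma^2(x)+p(x)\int_{\R_+}z^2\pi(dz)}{x}+2\bigl(\E[\Theta^2]-\tfrac12\bigr)\alpha\,x^2. $$
Because $\Theta\in(0,1)$ a.s.\ and $\E[\Theta]=\tfrac12$, one has $\Theta^2<\Theta$ a.s., hence $\E[\Theta^2]-\tfrac12<0$, so the $x^2$‑coefficient is strictly negative. Assumption \ref{ass_E1}, together with $\sigma(0)=p(0)=0$ and the local regularity of $\sigma,p$, makes the middle term bounded by $C(1+x)$ on $\R_+^{*}$; the strictly negative quadratic term then dominates, and a downward‑parabola argument yields constants $a,d>0$ with $\mathcal{A}_s^{(t)}V(x)\leq-aV(x)+d$ uniformly in $0\leq s\leq t$.

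\emph{Minoration} (ii). The state $0$ is absorbing for $Y^{(t)}$ (indeed $g,\sigma,p$, hence $f_1$ and the jump rates, all vanish at $0$, and fragmentation leaves $0$ fixed), so I plan to take $\nu=\delta_0$; the claim then reduces to the uniform lower bound $\inf_{x\leq R}\P\bigl(Y_s^{(t)}=0\mid Y_r^{(t)}=x\bigr)\geq\alpha_{s-r}>0$ with $\alpha_{s-r}$ depending only on $s-r$. To this end I would dominate $Y^{(t)}$ from above, on $[r,s]$, by a process $\hat Y$ driven by the same Brownian motion and Poisson measures, built so that $\hat Y$ still reaches $0$ under Condition \ref{A2}: this is exactly the coupling of Proposition \ref{prop_extin_auxi}ii) (relying on the adaptation of \cite[Theorem 3.3]{companion}), replacing the drift and positive‑jump rates by suitable upper bounds and the fragmentation rate by the lower bound $2\theta\beta\leq f_2$. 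Since $\hat Y\geq Y^{(t)}\geq0$ and $0$ is absorbing, the event $\{\hat Y\text{ hits }0\text{ before time }s-r\}$ forces $Y_s^{(t)}=0$; monotonicity of $\hat Y$ in its initial datum then makes $x\mapsto\P_x(\hat Y\text{ hits }0\text{ before }s-r)$ nonincreasing, so the infimum over $\{x\leq R\}$ is attained at $x=R$ and equals a strictly positive $\alpha_{s-r}$ depending only on $s-r$. Choosing any $R>2d/a$ completes the verification.

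The routine step is (i): once the two uniform ratio bounds are in hand it is a one‑line estimate. The hard part will be the uniformity in (ii), namely propagating the reachability of the absorbing state through the time‑inhomogeneity so that $\alpha_{s-r}$ is free of $t$ and $r$. This is transparent when $\partial_x m/m$ is bounded \emph{uniformly} in $\tau$ by a constant, which happens precisely when $g<\beta$, where $\partial_x m/m\leq\alpha/(\beta-g)$ and $\hat Y$ may be taken time‑homogeneous; in general, as $\tau$ grows the size‑biasing factor $\partial_x m/m$ approaches $1/x$ near the origin, and one must instead exploit the monotonicity in $\tau$ of $m(\cdot,s,t)$, in the spirit of Proposition \ref{prop_extin_auxi}, to control the coupling and keep the absorption probability bounded below.
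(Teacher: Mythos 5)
Your proposal is correct and takes essentially the same route as the paper: for (i) the paper uses precisely your two uniform ratio bounds ($\partial_x m/m\leq 1/x$ and $m(\theta x,s,t)/m(x,s,t)\geq\theta$, equivalently $f_2(x,s,\theta)\geq 2\theta(\alpha x+\beta)$) to reach the same upper bound $gx+\bigl(2\sigma^2(x)+p(x)\int_{\R_+}z^2\pi(dz)\bigr)/x-2\alpha x^2\,\E[\Theta(1-\Theta)]$ and concludes from the strictly negative quadratic term; for (ii) it likewise takes $\nu=\delta_0$, dominates $Y^{(t)}$ from above by the process $\bar Y$ of \eqref{eq:barY} (drift frozen at its time-$t$ bound, fragmentation rate lowered to $\bar r(\theta)=2\theta\beta$), uses monotonicity in the initial condition to reduce to the starting point $R$, and invokes the generalization of \cite[Theorem 3.3iii)]{companion} under \ref{A2}. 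Your closing concern about the dependence of $\alpha_{s-r}$ on $t$ is not treated any more explicitly in the paper, which simply applies the time-dependent-jump-rate version of the companion result to $\bar Y$ and names the resulting bound $\alpha_{s-r}$, so your write-up is at the same level of completeness on that point.
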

\begin{proof}{\it{i)}} 
We have
\begin{align*}
\mathcal{A}_s^{(t)}V(x) & = f_1(x,t-s)-\int_0^1f_2(x,t-s,\theta)x(1-\theta)\kappa(d\theta)\\
& \leq V(x)\left(g+ 2\frac{\sigma(x)^2}{x^2}+\frac{p(x)}{x^2}\int_{\R_+}z^2\pi(dz) -2\alpha x \E\left[\Theta(1-\Theta)\right]\right).
\end{align*}
According to \eqref{eq:limsup_sigma}, there exist $A>0$ and $a>0$ such that for all $x>A$
\begin{align*}
2\frac{\sigma(x)^2}{x^2}+\frac{p(x)}{x^2}\int_{\R_+}z^2\pi(dz) -2\alpha x \E\left[\Theta(1-\Theta)\right]<-(a+g).
\end{align*}
Then, 
\begin{align*}
\mathcal{A}_s^{(t)}V(x) \leq -ax+ \mathbf{1}_{\lbrace x\leq A\rbrace}\left(2\frac{\sigma(x)^2}{x}+\frac{p(x)}{x}\int_{\R_+}z^2\pi(dz) -2\alpha x^2 \E\left[\Theta(1-\Theta)\right]\right),
\end{align*}
and according to Assumption \ref{ass_A}, there exists $d>0$ such that for every $x \geq 0$,
\begin{align*}
\mathcal{A}_s^{(t)}V(x) & \leq -ax+d.
\end{align*}
{\it ii)} Let $R>2da^{-1},$ where $a,d$ are given in {\it i)}. We will prove the minoration condition with $\nu = \delta_0$, where $\delta_0$ is the Dirac measure at $0$. Consider again $\bar{Y}$, defined as 
the unique strong solution to the SDE \eqref{eq:barY}. We recall that $\bar{Y_s}\geq Y_s^{(t)}$, for all $s\leq t$. Therefore for all $r<s\leq t$ and all Borel sets $A$ of $\mathbb{R}_+$,
\begin{align*}
\mathbb{P}\left(Y_s^{(t)}\in A\big|Y_r^{(t)}=x\right) \geq \mathbb{P}\left(Y_s^{(t)}=0\big|Y_r^{(t)} = x\right)\delta_0(A) \geq \mathbb{P}\left(\bar{Y}_s=0\big|\bar{Y}_r = x\right)\delta_0(A).
\end{align*} 
Next, notice that if $\bar{Y}^1, \bar{Y}^2$ are two solutions to \eqref{eq:barY} with respective initial conditions at time $r$ satisfying 
$\bar{Y}^1_r\leq \bar{Y}^2_r$, then $\bar{Y}^1_s\leq \bar{Y}^2_s$ for all $r\leq s\leq t$. Hence, for all $x\leq R$,
\begin{align*}
 \mathbb{P}\left(\bar{Y}_s=0\big|\bar{Y}_r = x\right)=\mathbb{P}\left(\bar{Y}_{s-r}=0\big|\bar{Y}_0 = x\right)\geq 
 \mathbb{P}\left(\bar{Y}_{s-r}=0\big|\bar{Y}_0 = R\right).
\end{align*}
Finally, as \ref{A2} holds, using \cite[Theorem 3.3{\it iii)}]{companion} on $\bar{Y}$, there exists $\alpha_{s-r}>0$ such that 
$$
\mathbb{P}\left(\bar{Y}_{s-r}=0\big|\bar{Y}_0 = R\right)>\alpha_{s-r},
$$
which ends the proof.\end{proof}
\begin{proof}[Proof of Proposition \ref{prop:conv_auxi}] 
This result is a direct application of \cite[Proposition 3.3]{marguet2017law}. The main assumptions are satisfied thanks to Lemma \ref{lemma:Lyapunov_min}. Note that using the expression of $m$ given in \eqref{mxst}, \cite[Assumption 2.4]{marguet2017law} is satisfied because
$\int_0^1\frac{m(\theta x,s,t)}{m(x,s,t)}\kappa(d\theta)\leq 1/2.
$\end{proof}
This convergence result allows us to establish a law of large numbers, linking asymptotically the behaviour of a 
typical individual with the behaviour of the whole population.

\subsubsection{Proof of Theorem \ref{thm:conv_auxi}}

It is a direct application of \cite[Corollary 3.7]{marguet2017law}. Assumptions 2.1, 2.3 and 2.4 in \cite{marguet2017law} are 
satisfied thanks to Assumption \ref{ass_A}, using \eqref{mxtbis} and \eqref{mxst}, and the fact that $\beta>0$. 
We proved that Assumption 3.1 in \cite{marguet2017law} is verified in Lemma \ref{lemma:Lyapunov_min}. It remains to check that Assumptions 3.4 and 3.6 in \cite{marguet2017law} are satisfied. Note that in our case, the function $c(x)$ defined in \cite[Equation 3.3]{marguet2017law} 
is equal to $\max(g,\beta)-q$ and the first point of Assumption 3.4 in \cite{marguet2017law} is satisfied.

Next, we set some notations, introduced in \cite{marguet2017law}. For all $x,y\geq 0$ and $s\geq 0$, we define
\begin{equation*}
\varphi_{s}(x,y)=\sup_{t\geq s} \frac{m(x,0,s)m(y,s,t)}{m(x,0,t)},
\end{equation*}
(which does not depend on $q$)
and for all measurable functions $f:\mathbb{R}_+\rightarrow\mathbb{R}$ and $x\geq 0$,
\begin{align*}
Jf(x)=2\int_{0}^1f\left(\theta x\right)f\left((1-\theta)x\right)\kappa(d\theta).
\end{align*}
The next lemma amounts to check the second point of Assumption 3.4 in \cite{marguet2017law}. 
\begin{lemma}
Under Condition \ref{ass_E2}, then for all $x\geq 0$,
\begin{equation*}
\sup_{t\geq 0}\E_{x}\left[r\left(Y_{t}^{(t)}\right)J\left((1\vee V(\cdot))\varphi_{t}\left(x,\cdot\right)\right)\left(Y_{t}^{(t)}\right)\right]<\infty.
\end{equation*}
\end{lemma}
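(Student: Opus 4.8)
The plan is to control $\varphi_t(x,\cdot)$ by an affine function of its second argument, uniformly in $t$, and thereby reduce the statement to a uniform-in-$t$ bound on a polynomial moment of the terminal value $Y_t^{(t)}$ of the auxiliary process.

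First I would record the affine bound: there is a constant $\bar Q(x)<\infty$ depending on $x$ but not on $t$ such that
\begin{equation*}
\varphi_t(x,y)\leq 1 + \alpha\,\bar Q(x)\,y,\qquad \forall\,y\geq 0,\ t\geq 0.
\end{equation*}
Writing $m(x,0,v)=e^{(\beta-q)v}+\alpha x\,\Psi(v)$ with $\Psi(v):=(e^{(g-q)v}-e^{(\beta-q)v})/(g-\beta)\geq 0$ from \eqref{mxst}, a direct computation gives the semigroup identity $m(x,0,t+u)=e^{(\beta-q)u}\,m(x,0,t)+\alpha\,\Psi(u)\,x\,e^{(g-q)t}$. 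Since $m(y,t,s)=m(y,0,s-t)$ is affine in $y$, setting $u=s-t$ yields $\varphi_t(x,y)=\sup_{u\geq 0} m(x,0,t)[e^{(\beta-q)u}+\alpha y\,\Psi(u)]/m(x,0,t+u)$; bounding the suprema of the constant and linear parts separately, the identity shows that the constant part is $\leq 1$ (drop the nonnegative term $\alpha\Psi(u)xe^{(g-q)t}$ in the denominator) while the slope is $\leq \sup_{u\geq 0} m(x,0,t)\Psi(u)/m(x,0,t+u)$, which a short case analysis on the sign of $g-\beta$ bounds uniformly in $t$.

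Once the affine bound holds, $(1\vee V(z))\varphi_t(x,z)\leq (1+z)(1+\alpha\bar Q(x)z)$ is dominated by a degree-two polynomial in $z$ with $t$-independent coefficients; hence $J((1\vee V)\varphi_t(x,\cdot))(y)$ is dominated by a degree-four polynomial in $y$ (the $\theta$-moments against $\kappa$ being finite), and multiplying by $r(y)=\alpha y+\beta$ gives a degree-five polynomial. Thus there is $C(x)<\infty$, independent of $t$, with $r(y)\,J((1\vee V)\varphi_t(x,\cdot))(y)\leq C(x)(1+y^5)$, and the lemma reduces to showing $\sup_{t\geq 0}\E_x[(Y_t^{(t)})^5]<\infty$.

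I would obtain this moment bound by a Foster--Lyapunov argument with $V_p(x)=x^p$, $p\leq 5$, in the spirit of Lemma \ref{lemma:Lyapunov_min}. In $\mathcal{A}_s^{(t)}V_p$ the fragmentation term equals $2r(x)x^p\int_0^1(\theta^p-1)\,m(\theta x,s,t)/m(x,s,t)\,\kappa(d\theta)$; using $\theta\leq m(\theta x,s,t)/m(x,s,t)\leq 1$ this is at most $-2\alpha x^{p+1}\int_0^1(1-\theta^p)\theta\,\kappa(d\theta)$, a strictly negative term of order $x^{p+1}$ with constant uniform in $(s,t)$. The remaining part $\widehat{\mathcal G}_s^{(t)}V_p$ is $O(x^p)$ uniformly in $(s,t)$: the $m$-ratios in \eqref{eq:gene_auxi} obey $\partial_x m/m\leq 1/x$, $(m(x+z)-m(x))/m(x)\leq z/x$ and $m(x+z)/m(x)\leq 1+z/x$, and combined with $\sigma^2(x)\lesssim x^2$, $p(x)\lesssim x^2$ from \eqref{eq:limsup_sigma} and $\int_0^\infty(z\vee z^6)\pi(dz)<\infty$ from \eqref{cond_moment2_pi} this controls every term; the order-six moment enters precisely through the main jump term, where $((x+z)^p-x^p-pzx^{p-1})(1+z/x)$ produces $\int z^{p+1}\pi(dz)$ with $p=5$. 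This gives $\mathcal{A}_s^{(t)}V_p(x)\leq -a_p x^p+d_p$ with $a_p,d_p>0$ uniform in $s\leq t$ and $t$, and Gronwall's lemma yields $\E_x[(Y_s^{(t)})^p]\leq x^p+d_p/a_p$ for all $s\leq t$; evaluating at $s=t$ gives the uniform bound. The main obstacle is this last step --- the uniform control of the time-inhomogeneous generator together with the jump-term bookkeeping that pins down $\int z^6\pi(dz)$ as the exact integrability required --- with the secondary technical point that $V_p$ is unbounded, so the drift estimate must be applied along localizing stopping times and passed to the limit by monotone convergence before invoking Gronwall.
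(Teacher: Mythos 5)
Your proposal is correct and follows essentially the same route as the paper: a $t$-uniform affine bound on $\varphi_t(x,\cdot)$ obtained from the explicit formula \eqref{mxst}, reduction to $\sup_{t\geq 0}\E_x[(Y_t^{(t)})^5]<\infty$, and a moment bound driven by the fragmentation term, whose rate $f_2(x,s,\theta)\geq 2\theta\alpha x$ yields a negative contribution of order $x^{p+1}$ and pins down $\int z^6\pi(dz)<\infty$ exactly as you describe. The only cosmetic difference is the closing step: the paper keeps the superlinear term as $-D_2 f_6^{(t)}$, applies Jensen's inequality $f_6^{(t)}\geq (f_5^{(t)})^{6/5}$, and compares with the ODE $y'=D_1(y+1)-D_2y^{6/5}$, whereas you absorb the $x^{p+1}$ term pointwise into a linear Foster--Lyapunov inequality $\mathcal{A}_s^{(t)}V_5\leq -a_5V_5+d_5$ (as in Lemma \ref{lemma:Lyapunov_min}) and conclude by Gronwall; both exploit the same mechanism.
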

\begin{proof}
Note that if $x=0$, $Y_t^{(t)}=0$ almost surely for all $t\geq 0$. Therefore, we only need to consider $x>0$. 
First assume that $\alpha>0$. Notice that for all $t\geq 0$ $x>0$, and $y\geq 0$, 
$$
\varphi_t(x,y)\leq \left(1+\frac{\alpha x}{|g-\beta|}\right)\left(1+\frac{\alpha y}{|g-\beta|}\right)\left(\min\left(\frac{\alpha x}{|g-\beta|},1\right)\right)^{-1},
$$
where we simplified by $e^{\max(g,\beta)t}$ in the fraction in the definition of $\varphi_s$.
Next, for all $x>0$, 
\begin{align*}
& \E_{x}\left[r\left(Y_{t}^{(t)}\right)J\left((1\vee V(\cdot))\varphi_{t}\left(x,\cdot\right)\right)\left(Y_{t}^{(t)}\right)\right]\\
& \leq \left(\frac{|g-\beta|+\alpha x}{\min\left(\alpha x,|g-\beta|\right)}\right)^2\E_{x}\left[\left(\alpha Y_{t}^{(t)}+\beta\right)
2\int_{0}^1\left(1\vee\theta Y_{t}^{(t)}\right)\left(1\vee(1-\theta) Y_{t}^{(t)}\right)\left(1+\frac{\alpha Y_{t}^{(t)}}{|g-\beta|}\right)^2\kappa(d\theta)\right]. 
\end{align*}
For all $k\geq 0$, we define
\begin{align*}
f_k^{(t)}(x,s)=\mathbb{E}_x\left[\left(Y_s^{(t)}\right)^k\right]
\end{align*}
and we end the proof of the lemma by showing that,
 $\sup_{t\geq 0}\sup_{s\leq t}f_5^{(t)}(x,s)<\infty.$
According to It\^o's formula, we have for $k\geq 2$,
\begin{align*}
f_k^{(t)}(x,s) = &  k\int_0^s \mathbb{E}_x\left[\left(Y_u^{(t)}\right)^{k-1}f_1\left(Y_u^{(t)},t-u\right)\right]du + k(k-1)\int_0^s \mathbb{E}_x\left[\sigma^2\left(Y_u^{(t)}\right)\left(Y_u^{(t)}\right)^{k-2}\right]du\\
& +\int_0^s \int_{\mathbb{R}_+}\mathbb{E}_x\left[f_2\left(Y_u^{(t)},t-u,\theta\right)\left(Y_u^{(t)}\right)^k(\theta^k-1)\right]\kappa(d\theta)du\\
& +\int_0^s \int_{\mathbb{R}_+}\mathbb{E}_x\left[f_3\left(Y_u^{(t)},t-u,z\right)\left(\left(Y_u^{(t)}+z\right)^k-\left(Y_u^{(t)}\right)^k-kz\left(Y_u^{(t)}\right)^{k-1}\right)\right]\pi(dz)du.
\end{align*}
Differentiating with respect to $s$ and using that for all $x,s,z\geq 0$ and $\theta\in[0,1]$, $f_2(x,s,\theta)\geq 2\theta\alpha x$, 
$f_3(x,s,z)\leq (x+z)p(x)/x$, and $x^{k-1}f_1(x,s)\leq gx^k+2\sigma(x)^2x^{k-2}+p(x)x^{k-2}\int_{\R_+}z^2\pi(dz)$.

Let $ H(k,y) = (k+1)\sigma^2(y)y^{-2}+kp(y)y^{-2}\int_{\R_+}z^2\pi(dz)$. Applying Taylor's formula with integral remainder, we obtain 
\begin{align*}
\partial_s f_k^{(t)}(x,s)\leq &  gk \mathbb{E}_x\left[\left(Y_s^{(t)}\right)^k\right]
 + k\mathbb{E}_x\left[H(k,Y_s^{(t)})(Y_s^{(t)})^{k}\right] - \int_{\mathbb{R}_+}\mathbb{E}_x\left[2\alpha \left(Y_s^{(t)}\right)^{k+1}\theta(1-\theta^k)\right]\kappa(d\theta)\\
& +k(k-1) \int_{\mathbb{R}_+}\int_0^z(z-u)\mathbb{E}_x\left[\frac{p\left(Y_s^{(t)}\right)}{Y_s^{(t)}}\left(Y_s^{(t)}+u\right)^{k-2}\left(Y_s^{(t)}+z\right)\right]du \pi(dz).
\end{align*}
Moreover, for all $y> 0$,
\begin{align*}
&\int_{\mathbb{R}_+}\int_0^z(z-u)\frac{p\left(y\right)}{y}\left(y+u\right)^{k-2}\left(y+z\right)du \pi(dz) \\
& \leq  \int_{\mathbb{R}_+}z^2\frac{p(y)}{y}\left(y+z\right)^{k-1} \pi(dz)
=  \frac{p(y)}{y}\int_{\mathbb{R}_+}z^2\sum_{l = 0}^{k-1}\dbinom{k-1}{l}y^lz^{k-1-l} \pi(dz)\\
& = \frac{p(y)}{y}\int_{\mathbb{R}_+}z^2\sum_{l = 0}^{k-2}\dbinom{k-1}{l}y^lz^{k-1-l} \pi(dz)+\frac{p(y)}{y^2}\left(\int_{\R_+}z^2\pi(dz)\right)y^{k}.
\end{align*}
Combining the last two inequalities, we get 
\begin{align*}
\partial_sf_k^{(t)}(x,s)\leq & k\left( A_t^{(k)}+B_t^{(k)}-C_t^{(k)}+D_t^{(k)}\right),
\end{align*}
with
$$ A_t^{(k)} = gf_k^{(t)}(x,s),\ 
B_t^{(k)} = \mathbb{E}_x\left[H(k,Y_s^{(t)})(Y_s^{(t)})^{k}\right],\ C_t^{(k)} =\frac{2\alpha}{k} \mathbb{E}\left[\Theta(1-\Theta^k)\right]f_{k+1}^{(t)}(x,s),
$$
$$D_t^{(k)} =(k-1) \int_{\mathbb{R}_+}\mathbb{E}_x\left[\frac{p(Y_s^{(t)})}{\left(Y_s^{(t)}\right)^2}\sum_{l = 0}^{k-2}\dbinom{k-1}{l}(Y_s^{(t)})^{l+1}z^{k+1-l}\right] \pi(dz).
$$
To end the proof we consider the case $k=5$.
According to \eqref{eq:limsup_sigma} and using that $\sigma$ and $p$ are continuous (Assumption \ref{ass_A}), there exist $C_1,C_2, A>0$ such that for all $y\geq 0$,
\begin{align*}
H(5,y)y^{5} = H(5,y)y^5\mathbf{1}_{\lbrace y>A\rbrace}+H(5,y)y^5\mathbf{1}_{\lbrace y\leq A\rbrace}&\leq C_1y^5\mathbf{1}_{\lbrace y>A\rbrace}
+C_2\mathbf{1}_{\lbrace y\leq A\rbrace}\leq C_1y^5+C_2.
\end{align*}
Moreover, $\limsup_{0^+}p(x)/x<\infty$ as $p(0)=0$ and $p$ is locally Lipschitz, $\limsup_\infty p(x)/x^2<\infty$ thanks to \eqref{eq:limsup_sigma} 
and $\int_{\R_+}z^6\pi(dz)<\infty$, which yields
\begin{align*}
D_t ^{(5)}\leq C_3  (f_5^{(t)}(x,s) + 1),
\end{align*}
for some $C_3\geq 0$. Combining the last two inequalities, there exists $D_1>0$ such that
\begin{align*}
\partial_sf_5^{(t)}(x,s)\leq  D_1 (f_5^{(t)}(x,s)+1)-D_2f_{6}^{(t)}(x,s),
\end{align*}
where $D_2 = \frac{2\alpha}{5} \mathbb{E}\left[\Theta(1-\Theta^5)\right]$.
Applying Jensen inequality, we have $f_6^{(t)}(x,s)\geq f_5^{(t)}(x,s)^{6/5}$. Finally, we obtain
\begin{align*}
\partial_sf_5^{(t)}(x,s)\leq  F\left(f_5^{(t)}(x,s)\right),
\end{align*}
with $F(y) = D_1(y+1)-D_2y^{1+1/5}$. Any solution to the equation $y'=F(y)$ is bounded by $y(0)\vee x_0$, where $x_0 = \left(5D_1/6D_2\right)^5$
and so is $f_5^{(t)}(x,\cdot)$. It ends the proof for this case.
\end{proof}
Adding Lemma \ref{lem_esp_Nt2} (which corresponds to Assumption 3.6 in \cite{marguet2017law}.) we have all required assumptions to apply \cite[Corollary 3.4]{marguet2017law}. This ends the proof of Theorem \ref{thm:conv_auxi}.

\subsubsection{Proof of Proposition \ref{prop:temps_long_auxi}}
 
{\it i)} The first step consists in proving that for every $x\geq 0$,
\begin{equation} \label{abs_finitetime_Yt}
 \P_x(\exists t<\infty, Y^{(t)}_t=0)=1. 
\end{equation}
A direct application of \cite[Theorem 6.2]{companion} is not possible 
because of the time-inhomogeneity of $Y^{(t)}$. Therefore, we couple $Y^{(t)}$
with a process $\hat{Y}$ defined as the unique strong solution to
\begin{align} \nonumber\label{eq:hatY}
\hat{Y}_s = & Y_0^{(t)}+ \int_0^s\hat{g}(\hat{Y}_u)du + \int_0^s\sqrt{2\sigma^2(\hat{Y}_u)}dB_u 
+ \int_0^s\int_0^1(\theta-1)\int_{0}^{\hat{r}(\hat{Y}_u,\theta)}\hat{Y}_u N(du,d\theta,dx)\\
& + \int_0^s \int_0^\infty\int_0^{f_3(\hat{Y}_u,t-s,z)}z\widetilde{Q}(du,dz,dx),
\end{align}
where $B$, $N$ and $\tilde{Q}$ are the same as in \eqref{eq:EDSauxi} and for $x,s\geq 0$, $0 \leq \theta\leq 1$,
\begin{align*}
&f_1(x,s)\leq \hat{g}(x): =
gx+ \left(\frac{\alpha\mathbf{1}_{\lbrace\beta>g\rbrace}}{\beta-g+ \alpha x}+\frac{\mathbf{1}_{\lbrace g>\beta\rbrace}}{x}\right)\left( 2\sigma^2(x)+ p(x)\int_{\R_+}z^2\pi(dz)\right)\\ \nonumber
&f_2(x,s,\theta)\geq \hat{r}(x,\theta):= 2\theta(\alpha x+\beta).
\end{align*}
Then, for all $t\geq 0$ and $0\leq s \leq t$, 
$
Y_s^{(t)} \leq \hat{Y}_s. 
$
In particular, for all $t\geq 0$,
\begin{align}\label{couplage_t}
Y_t^{(t)} \leq \hat{Y}_t. 
\end{align}
According to Lemma \ref{chgt-eds}, there exists a PPM $N'$ on $\mathbb{R}_+\times [0,1]\times\mathbb{R}_+$ with intensity 
$du\otimes\hat{\kappa}(d\theta)\otimes dx$ where $\hat{\kappa}(d\theta)=2\theta\kappa(d\theta)$, such that $\hat{Y}$ is also a strong pathwise solution to
\begin{multline*}
% \label{eq:hatY}
\hat{Y}_s =  Y_0^{(t)}+ \int_0^s\hat{g}\left(\hat{Y}_u\right)du + \int_0^s\sqrt{2\sigma^2\left(\hat{Y}_u\right)}dB_u 
+ \int_0^s\int_{0}^{r(\hat{Y}_u)}\int_0^1(\theta-1)\hat{Y}_u N'(du,dx,d\theta)\\
 + \int_0^s \int_0^\infty\int_0^{f_3(\hat{Y}_u,t-s,z)}z\widetilde{Q}(du,dz,dx),
\end{multline*}
where we used that $\int_0^12\theta\kappa(d\theta)=1$ as $\kappa$ is symmetrical with respect to $1/2$.
The jump rate $f_3$ depends on jump size and time, but we have  
$$f_3(x,s,z)\leq p(x)\left(1+ \alpha z \frac{(e^{(g-\beta)s}-1)}{g-\beta}\right)$$
so that (6.3) in \cite[Theorem 6.2]{companion} holds for $\hat{Y}$ (see Appendix \ref{app:th62withtimeinjumps}) if \ref{eq:SNinfinity-1}, \ref{A2} and {\bf (LSG)} are satisfied, where
\begin{enumerate}[label=\bf{(LSG)}]
\item \label{eq:LSG} There exist $\eta_1,x_1>0$ such that for all $x>x_1$
\begin{align*}
\hat{H}(x):=\frac{\hat{g}(x)}{x}-\frac{\sigma^2(x)}{x^2}+\hat{r}(x)\E\left[\ln\Theta\right]<-\eta_1.
\end{align*}
\end{enumerate} 
First, \ref{A2} is satisfied by assumption. Let us check that \ref{eq:LSG} holds. We have
\begin{align*}
\hat{H}(x) &\leq g + \left(\frac{\mathbf{1}_{\{\beta>g\}}\alpha x}{\beta-g + \alpha x} + \mathbf{1}_{\{g \geq\beta\}}\right)\left(2\frac{\sigma^2(x)}{x^2} +
\frac{p(x)}{x^2}\int_{\R_+}z^2\pi(dz)
\right)+2(\alpha x+\beta)\E\left[\Theta\ln \Theta \right].
\end{align*}
According to Assumptions \ref{ass_A} and \eqref{eq:limsup_sigma}, there exist $x_1,C>0$ such that for all $x>x_1$ 
\begin{align*}
\hat{H}(x) \leq g + C\left(\frac{\mathbf{1}_{\{\beta>g\}}\alpha x}{\beta-g + \alpha x} + \mathbf{1}_{\{g \geq\beta\}}\right)+2(\alpha x+\beta)\E\left[\Theta\ln \Theta \right],
\end{align*}
and as $\E\left[\Theta\ln \Theta \right]<0$, condition \ref{eq:LSG} is satisfied. Finally, we check that \ref{eq:SNinfinity-1} is satisfied for $\hat{Y}$. And it is the case according to \eqref{eq:limsup_sigma} as
\begin{align*}
\frac{\hat{g}(x)}{x}
= g  + \left(\mathbf{1}_{\{\beta>g\}}\frac{\alpha x}{\beta-g + \alpha x} + \mathbf{1}_{\{g>\beta\}}\right)\left(2\frac{\sigma^2(x)}{x^2} +
\frac{p(x)}{x^2}\int_{\R_+}z^2\pi(dz)
\right).
\end{align*}
Hence \cite[Eq.~(6.3)]{companion} holds for $\hat{Y}$, and \eqref{couplage_t} gives \eqref{abs_finitetime_Yt}.
Applying Theorem~\ref{thm:conv_auxi} to the function $F(X_{t+s}^u, s \leq T) = \mathbf{1}_{\{X_{t+T}^u>0\}}$ concludes the proof of point {\it i)}.\\

Let us now prove point {\it ii)}.
The convergence result stems directly from Theorem \ref{thm:conv_auxi}. For the bounds on the process $(Y_t^{(t)}, t\geq 0)$, the idea is to use the same couplings as before, and apply the generalization of \cite[Theorem 6.2(i)]{companion} proved in Appendix \ref{app:th62withtimeinjumps}.

First, we consider again $\hat{Y}$ defined as the unique strong solution to \eqref{eq:hatY}. Recall that for all $t\geq 0$ and all $0\leq s \leq t$, $Y_s^{(t)}\leq \hat{Y}_s$. To apply \cite[Theorem 6.2(i)]{companion}, we have to check that \ref{A1}, \ref{eq:SNinfinity-1} and {\bf (LSG)} are satisfied. By assumption, \ref{A1} holds for $\hat{Y}$. We refer the reader to the proof of {\it i)} for the two other conditions. According to \cite[Theorem 6.2(i)]{companion}, $\hat{Y}$ converges in law to $\hat{Y}_\infty$ as $t$ goes to infinity. 

Next, we consider again $\tY$, defined as the unique strong solution to \eqref{eq:tildeY}. Recall that for all $t\geq 0$ and $0\leq s\leq t$, $\tY_s\leq Y_s^{(t)}$. Let us check that \ref{A1}, \ref{eq:SNinfinity-1} and {\bf (LSG)} of \cite{companion} holds for $\tY$. Again, \ref{A1} holds by assumption. Then, \ref{eq:SNinfinity-1} holds combining \eqref{eq:limsup_sigma} with the fact that the division rate of $\tY$ is linear in $x$. Finally, let
$$\widetilde{H}(x): = g-\frac{\sigma^2(x)}{x^2} + 2(\alpha x + \beta)\E\left[\ln\Theta\right].$$
From \eqref{eq:limsup_sigma}, there exist $\eta,x_1>0$ such that for $x>x_1$, $\widetilde{H}(x)\leq -\eta$. Hence {\bf (LSG)} of \cite{companion} holds for $\tY$ and according to \cite[Theorem 6.2(i)]{companion}, $\tY$ converges in law to $\tY_\infty$ as $t$ goes to $\infty$.

Finally, we have for all $t\geq 0$
$
\mathfrak{Y}^{(1)}_t:=\tY_t\leq Y_t^{(t)}\leq \mathfrak{Y}^{(2)}_t:=\hat{Y}_t,
$
which yields the result.
\appendix

\section{Proof of Proposition \ref{pro_exi_uni}} \label{proof_ex_uni}

To prove that the SDE \eqref{X_sans_sauts} admits a unique nonnegative strong solution with generator $\mathcal{G}$ defined in \eqref{def_gene}, we apply \cite[Proposition 1]{palau2018branching}. The proof is the same as the proof of \cite[Proposition 2.1]{companion}, except that we have to take into account the extra stable term. To prove that the result still holds, it is enough to check that for any $n \in \N$, there exists a finite constant $A_n$ such that for any $0 \leq x,y \leq n$,
$$ \int_{\R_+} \left| \mathbf{1}_{\{ x \leq u \}}z \wedge n - \mathbf{1}_{\{ y \leq u \}}z \wedge n \right|\frac{dz}{z^{2+\beta}}du \leq A_n |x-y|,$$
where we recall that $\beta \in (-1,0)$. This is a consequence of the following series of equalities:
\begin{align*}
 \int \left| \left(\mathbf{1}_{\{ x \leq u \}} - \mathbf{1}_{\{ y \leq u \}}\right)z \wedge n \right|\frac{dzdu}{z^{2+\beta}} &= \int \left| x  - y \right| z \wedge n\frac{dzdu}{z^{2+\beta}}= |x-y| \left( \int_0^n \frac{dz}{z^{1+\beta}} +n \int_n^\infty \frac{dz}{z^{2+\beta}} \right).
\end{align*}
To prove that it gives the existence and uniqueness of the process at the cell population level, we apply \cite[Theorem 2.1]{marguet2016uniform}.

\al{\section{Detailed computation of the infinitesimal generator $\mathcal{A}_{V_1}$}\label{app:generator_auxi}
In the proof of Proposition \ref{prop:gx-q}, we apply \cite[Lemma 3.3]{cloez2017limit} to our branching process, where the dynamics of the cells between the jumps are given by \eqref{def_gene}, with $c_\mathfrak{b}=0$ (no stable jumps). Recall that $V_1(x)=x$. For all $x\geq 0$, $f\in\mathcal{C}^2_b(\mathbb{R}_+)$, we have
\begin{align*}
\E_{\delta_x}\left[\sum_{u\in V_t} f(X_t^u)\right]= xe^{\lambda t}\E_x\left[ f(\mathcal{Y}_t)V_{1}^{-1}(\mathcal{Y}_t)\right],
\end{align*}
where $\mathcal{Y}_t$ is a Markov process with infinitesimal generator $\mathcal{A}_{V_1}=M_1 + J_1$, 
where for all $x\geq 0$, $f\in\mathcal{C}^2_b(\mathbb{R}_+)$, $J_1f(x) = 2r(x)\left[\int_0^1\theta \left(f(\theta x)-f(x)\right) \kappa(d\theta)\right]$, and
\begin{align*}
M_1f(x)  = &\ \left(\mathcal{G}(f\times V_1)(x)-f(x)\mathcal{G}V_1(x)\right)/{V_1(x)}\\
% =& \ g(x)f'(x) + \sigma^2(x)f''(x) + 2\frac{\sigma^2(x)}{x}f'(x)
%+\frac{p(x)}{x}\int_{\mathbb{R}_+}\left((f(x+z)-f(x))(x+z)-zf'(x)x\right)\pi(dz)\\
= & \ \left(g(x) + 2\frac{\sigma^2(x)}{x} +\frac{p(x)}{x}\int_{\mathbb{R}_+}z^2\pi(dz)\right)f'(x) + \sigma^2(x)f''(x)\\
&+ p(x)\int_{\mathbb{R}_+}\left(f(x+z)-f(x)-zf'(x)\right)\frac{(x+z)}{x}\pi(dz).
\end{align*}}
\section{Extension of results of \cite{companion}}
\subsection{Proof of \cite[Theorem 3.3]{companion} with stable jumps.}\label{app:th33-with-stable-jumps}
In this section we give the proof of the generalization of \cite[Theorem 3.3i)]{companion} to processes with stable jumps defined as the unique solution (see \cite[Proposition 1]{palau2018branching}) to \eqref{SDE_Y_diff_ct}.
Recall that $G_a$ is defined in \eqref{eq:Ga}, and $\tau^+, \tau^-$ in \eqref{tau2}, \eqref{eq:coupling} and \eqref{tau1}. 
\begin{lemma}\cite[Lemma 7.1]{companion}\label{prop:martingale-co}
Suppose that Assumption \ref{ass_A} holds. For all $b>c>0$, let $T = \tau^-(c)\wedge \tau^+(b)$. Then, for all $a\in(0,1)$ or $a>1$ such that $\E[\Theta^{1-a}]<\infty$, the process 
$$
Z^{(a)}_{t\wedge T}:=\left(Y_{t\wedge T}\right)^{1-a}\exp\left(\int_0^{t\wedge T} G_a\left(Y_s\right)ds\right)
$$ 
is a $\mathcal{F}_t$-martingale, where $Y$ is the unique solution of \eqref{SDE_Y_diff_ct}.
\end{lemma}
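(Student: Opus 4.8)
The plan is to prove this exactly along the lines of \cite[Lemma 7.1]{companion}, the only new ingredient being the stable-jump term driven by $R$ in \eqref{SDE_Y_diff_ct}. Writing $f(x)=x^{1-a}$ and letting $\mathcal{A}$ denote the generator of $Y$ introduced in \eqref{eq:gene_auxi1}, I would first apply Itô's formula for jump-diffusions to $Z^{(a)}_{t\wedge T}=(Y_{t\wedge T})^{1-a}\exp(\int_0^{t\wedge T}G_a(Y_s)\,ds)$. The finite-variation part of the resulting decomposition is $\int_0^{t\wedge T}\big(\mathcal{A}f(Y_s)+G_a(Y_s)f(Y_s)\big)\,e^{\int_0^s G_a(Y_u)du}\,ds$, so the whole argument reduces to the pointwise identity
\begin{equation*}
\mathcal{A}f(x)=-G_a(x)\,x^{1-a},\qquad x>0,
\end{equation*}
after which $Z^{(a)}_{\cdot\wedge T}$ is automatically a local martingale.

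For the identity I would compute $\mathcal{A}f$ term by term. The drift, diffusion and fragmentation contributions are elementary: they give respectively $(1-a)\tfrac{g(x)}{x}$, $-a(1-a)\tfrac{\sigma^2(x)}{x^2}$ and $2r(x)(\E[\Theta^{1-a}]-1)$, each multiplied by $x^{1-a}$, using $\int_0^1(\theta^{1-a}-1)\kappa(d\theta)=\E[\Theta^{1-a}]-1$. The compensated jump term $p(x)\int_0^\infty(f(x+z)-f(x)-zf'(x))\pi(dz)$ is handled as in \cite{companion}: Taylor's formula with integral remainder rewrites $(x+z)^{1-a}-x^{1-a}-(1-a)z\,x^{-a}$ as $-a(1-a)x^{-a-1}\int_0^1 z^2(1-v)(1+zvx^{-1})^{-1-a}dv$, which reproduces exactly the factor $I_a(x)$ of \eqref{eq:Ia} and yields $-(1-a)p(x)I_a(x)x^{1-a}$.

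The genuinely new computation is the stable term $x\int_0^\infty\big((x+z)^{1-a}-x^{1-a}\big)\rho(dz)$. Here I would use the scaling $z=xw$ together with the explicit density $\rho(dz)=\frac{c_\brho\brho(\brho+1)}{\Gamma(1-\brho)}z^{-2-\brho}dz$ to factor out $x^{1-a-\brho}$, leaving the constant integral $\int_0^\infty\big((1+w)^{1-a}-1\big)w^{-2-\brho}dw$. An integration by parts, legitimate because $\brho\in(-1,0)$ forces both boundary terms to vanish (at $0$ since $(1+w)^{1-a}-1\sim(1-a)w$, and at $\infty$ since $\brho>-1$), converts this into $\tfrac{1-a}{1+\brho}\int_0^\infty w^{-1-\brho}(1+w)^{-a}dw$, which is precisely the integral entering the definition of $C_a$ in \eqref{eq:Ia}. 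Collecting the five contributions gives $\mathcal{A}f(x)=-G_a(x)x^{1-a}$. I expect this stable-jump step---checking integrability near $0$ and $\infty$, the vanishing of the boundary terms, and that the surviving constant matches $C_a$ with its defining normalisation---to be the main technical point.

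Finally, to upgrade the local martingale to a true martingale on $[0,t\wedge T]$ with $T=\tau^-(c)\wedge\tau^+(b)$, I would argue as in \cite[Lemma 7.1]{companion}: on $\{s<T\}$ the process $Y$ stays in the compact interval $[c,b]$, where $g,\sigma^2,p,r$ and $G_a$ are bounded, the jump intensities $p(Y_{s^-})$ and $Y_{s^-}$ are bounded, $\int_0^\infty(z\wedge z^2)\pi(dz)<\infty$ controls the $\widetilde{Q}$-integral, and the assumption $\E[\Theta^{1-a}]<\infty$ controls the multiplicative fragmentation jumps carried by $N$. Hence the stochastic integrals against $B$, $\widetilde{Q}$, the compensated $R$ and the compensated $N$ are genuine martingales, so that $Z^{(a)}_{\cdot\wedge T}$ is a martingale, which is the claim.
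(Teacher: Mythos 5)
Your proposal is correct and takes essentially the same route as the paper's proof: It\^o's formula applied to $Y^{1-a}$ (your identity $\mathcal{A}f=-G_a f$ for $f(x)=x^{1-a}$, with $\mathcal{A}$ from \eqref{eq:gene_auxi1}), the same rewriting of the $\pi$- and $\rho$-jump terms (the paper's identities \eqref{eq:equivca}--\eqref{eq:equivIa}, obtained there by Taylor's formula, are exactly your integral-remainder and scaling/integration-by-parts computations), then the product rule and the upgrade from local to true martingale (the paper invokes Protter's Theorem 51 together with Assumption \ref{ass_A}, where you instead localize on $[c,b]$ and bound the compensators directly, which works). Two remarks on the step you rightly flag as the main technical point: first, for $a\in(0,1)$ the vanishing of your boundary term at $\infty$ (and the finiteness of the stable integral itself) requires $a>-\brho$, which is precisely the finiteness of $C_a$, i.e.\ the standing assumption that $G_a$ is well-defined, so you should state it; second, your integration by parts correctly produces the prefactor $\frac{1-a}{1+\brho}$, so the stable contribution to $\mathcal{A}f(x)$ is $(1-a)\,x^{1-a-\brho}\,\frac{1}{1+\brho}\int_0^\infty(1+w)^{-a}w^{-1-\brho}\frac{c_\brho\brho(\brho+1)}{\Gamma(1-\brho)}\,dw$, which agrees with the paper's own identity \eqref{eq:equivca} but \emph{not} with the factor $\frac{1}{2+\brho}$ displayed in the definition \eqref{eq:Ia} of $C_a$; this $\frac{2+\brho}{1+\brho}$ discrepancy is a typo in \eqref{eq:Ia} (the normalisation consistent with the martingale property is $\frac{1}{1+\brho}$), not an error in your computation, so your proof goes through with $C_a$ read as in \eqref{eq:equivca}.
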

\begin{proof}[Proof of Lemma \ref{prop:martingale-co}]
First notice that using Taylor's formula, we have
\begin{align}\label{eq:equivca}
C_a=(2+\mathfrak{b})^{-1}\int_{\mathbb{R}_+}z(1+z)^{-a}\rho(dz)=(1-a)^{-1}\int_{\mathbb{R}_+}\left((1+z)^{(1-a)}-1\right)\rho(dz),
\end{align}
\begin{align}\label{eq:equivIa}
\text{and} \quad (1-a)I_a(x) = \int_{\mathbb{R}_+}\left((1+zx^{-1})^{(1-a)}-1-(1-a)zx^{-1}\right)\pi(dz).
\end{align}
We follow the proof of \cite[Lemma 7.1]{companion}. Let $a\in(0,1)\cup\{a>1,\ \E[\Theta^{1-a}]<\infty\}$. 
Combining It\^o's formula with jumps, \eqref{eq:equivca}, \eqref{eq:equivIa}, we have for all $t\geq 0$
\begin{align*}
Y_t^{1-a}   =  Y_0^{1-a}-\int_0^t Y_s^{1-a}G_a(X_s)ds+ M_t,
\end{align*}
where $\left(M_t, t\geq 0\right)$ is a local martingale.
Next, using integration by parts we obtain that $(Z^{(a)}_{t\wedge T}, t\geq 0 )$ is a local martingale. Similarly to \cite{li2017general}, combining Assumptions \ref{ass_A} and \cite[Theorem 51 p.38]{protter2005stochastic} ends the proof.
\end{proof}
\begin{prop}
Suppose that Assumption \ref{ass_A} holds and let $Y$ be the pathwise unique solution to \eqref{SDE_Y_diff_ct}. If \ref{A1} holds
then $\mathbb{P}_x\left(\tau^-(0)<\infty\right)=0$ for all $x>0$. 
\end{prop}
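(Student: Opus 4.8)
The plan is to reproduce the proof of \cite[Theorem 3.3i)]{companion}, the martingale engine being now supplied by Lemma~\ref{prop:martingale-co}, which already incorporates the stable jumps through the term $x^{-\brho}C_a$ in $G_a$. Fix $x>0$ and $b>x$, and for $0<c<x$ set $T=\tau^-(c)\wedge\tau^+(b)$ with $\tau^-(c)=\inf\{t\geq 0,\,Y_t<c\}$ and $\tau^+(b)$ as in \eqref{eq:coupling}. Since \ref{A1} provides $a>1$ with $\E[\Theta^{1-a}]<\infty$, Lemma~\ref{prop:martingale-co} shows that $Z^{(a)}_{\cdot\wedge T}$ is a martingale, so optional stopping yields, for every $t\geq 0$,
$$
x^{1-a}=\E_x\!\left[Y_{t\wedge T}^{1-a}\exp\!\Big(\int_0^{t\wedge T}G_a(Y_s)\,ds\Big)\right].
$$

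The key step is to extract from \ref{A1} a lower bound on $G_a$ near $0$. Writing $G_a(u)=(a-1)\big[\tfrac{g(u)}{u}-a\tfrac{\sigma^2(u)}{u^2}+u^{-\brho}C_a-p(u)I_a(u)\big]-2r(u)(\E[\Theta^{1-a}]-1)$, and using that $\E[\Theta^{1-a}]\geq 1$ (since $\Theta\in(0,1)$ and $1-a<0$) while $r$ is bounded near $0$ (being locally Lipschitz), condition \ref{A1} together with $f\geq 0$ gives, for every $\eps>0$, a level $\delta_\eps>0$ and a constant $C'=C'(\eps,b)$ such that $G_a(u)\geq -\eps(a-1)|\ln u|-C'$ on $(0,\delta_\eps]$, while $G_a\geq -C'$ on $[\delta_\eps,b]$ by continuity. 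Hence $\inf_{[c,b]}G_a\geq -\eps(a-1)|\ln c|-C'$ as soon as $c<\delta_\eps$.

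I would then bound the right-hand side of the identity from below by restricting to the event $\{\tau^-(c)\leq t\wedge\tau^+(b)\}$, on which $t\wedge T=\tau^-(c)$ and the trajectory stays in $[c,b)$ before $\tau^-(c)$, so that $\exp(\int_0^{t\wedge T}G_a(Y_s)\,ds)\geq c^{\eps(a-1)t}e^{-C't}$. Moreover, the only downward moves of $Y$ are the fragmentations $Y\mapsto\theta Y$ with $\theta\in(0,1)$, whence $Y_{\tau^-(c)}\leq c$ and $Y_{\tau^-(c)}^{1-a}\geq c^{1-a}$. This produces
$$
\P_x\big(\tau^-(c)\leq t\wedge\tau^+(b)\big)\leq x^{1-a}\,c^{(a-1)(1-\eps t)}\,e^{C't}.
$$
Choosing $\eps<1/t$ makes the exponent $(a-1)(1-\eps t)$ strictly positive, so the bound tends to $0$ as $c\to 0$. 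Since $\tau^-(c)\leq\tau^-(0)$, the event $\{\tau^-(0)\leq t\wedge\tau^+(b)\}$ is contained in $\{\tau^-(c)\leq t\wedge\tau^+(b)\}$ for every $c$, giving $\P_x(\tau^-(0)\leq t\wedge\tau^+(b))=0$. On $\{\tau^-(0)\leq t\}$ the path remains bounded up to $\tau^-(0)$, hence $\tau^-(0)<\tau^+(b)$ for $b$ large; letting $b\to\infty$ and then $t\to\infty$ yields $\P_x(\tau^-(0)<\infty)=0$.

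The main obstacle is the second step: controlling the remainder $o(\ln u)$ in \ref{A1} so that the negative part of $G_a$ near $0$ grows strictly slower than $|\ln u|$, which is exactly what lets the competition between the diverging factor $c^{1-a}$ and the vanishing factor $c^{\eps(a-1)t}$ resolve in our favour once $\eps<1/t$. By contrast, the downward fragmentation jumps require only the elementary observation $Y_{\tau^-(c)}\leq c$, and the stable jumps enter solely through $C_a$, already absorbed into $G_a$ and into Lemma~\ref{prop:martingale-co}, so that the rest of the argument is unchanged from \cite{companion}.
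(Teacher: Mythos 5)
Your proof is correct, and while it runs on the same engine as the paper --- the martingale of Lemma \ref{prop:martingale-co} combined with the control of $G_a$ near $0$ supplied by \ref{A1} --- it completes the argument by a genuinely different route. The paper establishes the dichotomy \eqref{eq:dichotomie} only for levels $b<1$ (staying inside the region where \ref{A1} is invoked, as in \cite{companion}), and then needs a second ingredient: a coupling with the process $\widetilde{Y}$ whose division rate is frozen at the constant $r_{[0,N]}=\sup_{0\leq x\leq N}2r(x)$, so that \cite[Theorem 3.3i)]{companion} applies to $\widetilde{Y}$ wholesale; the conclusion is transferred back to $Y$ through the domination $\widetilde{Y}_s\leq Y_s$ on $[0,\tau^-(0)\wedge\tau^+(N)]$ and the limit $N\to\infty$. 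You dispense with the coupling by observing that the martingale estimate is available for \emph{every} level $b>x$: under Assumption \ref{ass_A} all the terms of $G_a$ are locally bounded on $(0,\infty)$ (in particular $r$ and $I_a$ are bounded on $[\delta_\eps,b]$), so your quantitative bound $\P_x\big(\tau^-(c)\leq t\wedge\tau^+(b)\big)\leq x^{1-a}c^{(a-1)(1-\eps t)}e^{C't}$ holds for arbitrary $b$, and sending $c\to 0$, then $b\to\infty$, then $t\to\infty$ finishes the proof. Your route is more self-contained (it re-proves rather than cites the dichotomy, needs no auxiliary process, and treats any starting point $x>0$ in one stroke); the paper's route never has to control $G_a$ away from the origin and can quote the companion theorem verbatim, at the price of the coupling machinery. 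Two small inaccuracies in your write-up, neither of which is a gap: the assertion that ``the only downward moves of $Y$ are the fragmentations'' is false (the diffusive part also decreases $Y$), but the fact you actually use, $Y_{\tau^-(c)}\leq c$, follows from right-continuity of the paths alone; and the step ``$\tau^-(0)<\tau^+(b)$ for $b$ large on $\{\tau^-(0)\leq t\}$'' implicitly uses that $Y$ cannot reach $0$ after explosion, which holds because $+\infty$ is absorbing --- the paper's own limit $N\to\infty$ rests on the same fact.
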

\begin{proof}(\cite[Theorem 3.3i)]{companion})
Let $n \geq 2$ and let $0<\varepsilon<b<1$ and $a>1$ such that $\E[\Theta^{1-a}]<\infty$ be such that  \ref{A1} holds for all $u\leq b$. Let $T_n = \tau^-(\varepsilon^n)\wedge \tau^+(b)$. As in \cite[Theorem 3.3i)]{companion}, combining Lemma \ref{prop:martingale-co} and \ref{A1},
we have that for all $0<\varepsilon<b$,
\begin{align}\label{eq:dichotomie}
\mathbb{P}_{\varepsilon}\left(\tau^-(0) =\infty\text{ or }\tau^+(b)<\tau^-(0)\right)=1.
\end{align}
We use a coupling to show that $\P_\eps(\tau^-(0)<\infty)=0$. 
Let for $N \in \N$,
$ r_{[0,N]}:= \sup_{0 \leq x \leq N} 2r(x), $
which is finite as $r$ is a continuous function.
Let
$\tY$ be the unique strong solution to
\begin{align*}\label{eq:coupling}\nonumber
\tY_t  = & \tY_0 + \int_0^t g(\tY_s) ds+ \int_0^t \sqrt{2 \sigma^2(\tY_s) }dB_s 
+ \int_0^t \int_0^{p(\tY_{s^-})}\int_{\mathbb{R}_+}z\widetilde{Q}(ds,dx,dz)
\\ &\int_0^t \int_0^{\tY_{s^-}} \int_{\R_+}  zR(ds,dx,dz)+ \int_0^t \int_0^{r_{[0,N]}} \int_0^1  (\theta-1)\tY_{s^-}N(ds,dx,d\theta),
\end{align*}
where the Brownian motion $B$ and the Poisson random measures $\widetilde{Q}$ and $N$ are the same as in \eqref{SDE_Y_diff_ct}. 
We will use four properties of this equation. 
\begin{itemize}
 \item[a)] It has a unique strong solution (see \cite[Proposition 1]{palau2018branching} and Assumptions \ref{ass_A}).
 \item[b)] If $\tY^{(1)}$ and $\tY^{(2)}$ are two solutions with $\tY^{(1)}_0 \leq \tY^{(2)}_0$, then
 $\tY^{(1)}_t \leq \tY^{(2)}_t$ for any $t\geq 0$.
 \item[c)] If $\tY$ is a solution with $\tY_0= Y_0$, then $\tY_t\leq Y_t$ for any $t$ smaller than $\tau^-(0) \wedge \tau^+(N) $.
 \item[d)] Equation \eqref{eq:dichotomie} holds for both $Y$ and $\tY$.
\end{itemize}
Following the proof of \cite[Theorem 3.3i)]{companion}, we obtain that
$
 \P_\eps\left( \ttau^-(0)<\infty \right)=0,
$
where the $\ttau$'s are defined as the $\tau$'s in \eqref{tau1} and \eqref{tau2} but for the process $\widetilde{Y}$.
Using the coupling described in point c), it implies that
$ \P_\eps\left( \tau^+(N) \leq \tau^-(0) \right)=1,
$
and letting $N$ tend to infinity, we get
$\P_\eps\left( \tau^-(0) =\infty \right)=1.$
\end{proof}
\subsection{Proof of \cite[Theorem 4.1]{companion} with stable jumps}\label{app:th41-with-stable-jumps} 
We consider again $Y$ being a solution \eqref{SDE_Y_diff_ct} and define 
$
\tau^+(\infty) = \lim_{n\rightarrow +\infty}\tau^+(n),
$
where $\tau^+(n)=\inf\{t\geq 0,\ Y_t\geq n\}$.
\begin{prop} \label{prop_B3}
Suppose that Assumption \ref{ass_A} holds and let $Y$ be the pathwise unique solution to \eqref{SDE_Y_diff_ct}. If \ref{eq:SNinfinity-1} holds,
then $\mathbb{P}_x\left(\tau^+(\infty)<\infty\right)=0$ for all $x>0$. 
\end{prop}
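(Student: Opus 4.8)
The plan is to follow the proof of \cite[Theorem 4.1i)]{companion}, the only genuinely new feature being the stable jump term, which has already been absorbed into the martingale of Lemma \ref{prop:martingale-co} through the identities \eqref{eq:equivca} and \eqref{eq:equivIa}; in particular $C_a$ is finite and nonnegative for every $a<1$, so the stable contribution $x^{-\brho}C_a$ to $G_a$ is genuinely controlled by the left-hand side of \ref{eq:SNinfinity-1}. Fix $a<1$ as in \ref{eq:SNinfinity-1}. From Lemma \ref{prop:martingale-co} (applied with upper level $b=n$), for any $0<c<n$ and $T_n=\tau^-(c)\wedge\tau^+(n)$ the nonnegative process $Z^{(a)}_{t\wedge T_n}=Y_{t\wedge T_n}^{1-a}\exp(\int_0^{t\wedge T_n}G_a(Y_s)ds)$ is a martingale, so $\E_x[Z^{(a)}_{t\wedge T_n}]=x^{1-a}$.

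First I would convert \ref{eq:SNinfinity-1} into a one-sided lower bound on $G_a$. For $a<1$ one has $1-a>0$, hence $\E[\Theta^{1-a}]<1$, and comparing \eqref{eq:Ga} with \ref{eq:SNinfinity-1} the bracket appearing in \eqref{eq:Ga} equals the left-hand side of \ref{eq:SNinfinity-1} minus the nonnegative quantity $r(u)(1-\E[\Theta^{1-a}])/(1-a)$; it is therefore bounded above by $-f(u)+o(\ln u)$. Since $f\geq 0$, for every $\eps>0$ there is $u_\eps$ such that this bracket is at most $\eps\ln u$ for $u\geq u_\eps$, and because $(a-1)<0$ this yields $G_a(u)\geq-(1-a)\eps\ln u$ there. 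By continuity of $G_a$ on the compact $[c,u_\eps]$ one obtains $G_a(u)\geq-(1-a)\eps\ln u-D_{\eps,c}$ for all $u\geq c$. The essential point is that the error term in \ref{eq:SNinfinity-1} is $o(\ln u)$, so the logarithmic coefficient $\eps$ can be chosen arbitrarily small.

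Next I would apply optional stopping at $T_n=\tau^-(c)\wedge\tau^+(n)$. Restricting $\E_x[Z^{(a)}_{t\wedge T_n}]=x^{1-a}$ to the event $\{\tau^+(n)\leq t\wedge\tau^-(c)\}$, on which $Y_s\in[c,n]$ for $s\leq\tau^+(n)\leq t$, the bound on $G_a$ gives $\int_0^{\tau^+(n)}G_a(Y_s)ds\geq -t\big((1-a)\eps\ln n+D_{\eps,c}\big)$, whence
\[
\P_x\big(\tau^+(n)\leq t\wedge\tau^-(c)\big)\leq x^{1-a}e^{D_{\eps,c}t}\,n^{-(1-a)(1-\eps t)}.
\]
Given $t>0$, choosing $\eps<1/t$ makes the exponent of $n$ negative, so letting $n\to\infty$ (recall $\tau^+(\infty)=\lim_n\tau^+(n)$) yields $\P_x(\tau^+(\infty)\leq t\wedge\tau^-(c))=0$ for every $c>0$ and every $t>0$. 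Finally, letting $c\downarrow 0$ along $c=1/k$, the events $\{\tau^+(\infty)\leq t\wedge\tau^-(1/k)\}$ increase to $\{\tau^+(\infty)\leq t\wedge\tau^-(0)\}$, which is thus null; and since $0$ is absorbing, reaching $0$ before exploding forces $\tau^+(\infty)=\infty$, so $\{\tau^+(\infty)\leq t\}=\{\tau^+(\infty)\leq t\wedge\tau^-(0)\}$. As $t>0$ is arbitrary, this gives $\P_x(\tau^+(\infty)<\infty)=0$.

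The step I expect to require the most care is the uniform-in-time conclusion: a crude bound $G_a(u)\geq -C\ln u$ with a fixed constant $C$ would only control $\P_x(\tau^+(n)\leq t)$ for $t<(1-a)/C$, and it is precisely the $o(\ln u)$ refinement in \ref{eq:SNinfinity-1} that lets $C=(1-a)\eps$ be sent to $0$ and thereby reach every time horizon (a Khasminskii/Osgood-type mechanism). A secondary point to check is that the quantity-of-parasites dependence of the fragmentation rate $2r(\cdot)$ causes no difficulty; because Lemma \ref{prop:martingale-co} applies directly to the solution of \eqref{SDE_Y_diff_ct}, no comparison argument is needed here, but should monotonicity of solutions be required one could replace $2r(Y)$ by the constant $\sup_{[0,n]}2r$ on $[0,n]$, exactly as in the coupling of Appendix \ref{app:th33-with-stable-jumps}.
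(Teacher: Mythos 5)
Your martingale estimate is sound: under \ref{eq:SNinfinity-1} one indeed gets $G_a(u)\geq-(1-a)\eps\ln u-D_{\eps,c}$ on $[c,\infty)$, and stopping $Z^{(a)}$ at $\tau^-(c)\wedge\tau^+(n)$ yields $\P_x\left(\tau^+(n)\leq t\wedge\tau^-(c)\right)\leq x^{1-a}e^{tD_{\eps,c}}n^{-(1-a)(1-\eps t)}$, hence $\P_x\left(\tau^+(\infty)\leq t\wedge\tau^-(c)\right)=0$ for every $c>0$. This is essentially the dichotomy that the paper's proof (following \cite[Theorem 4.1i)]{companion}) records as \eqref{two_cases}: on $\{\tau^+(\infty)<\infty\}$ the process must first pass below every positive level. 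But your final step is incorrect: the events $\{\tau^+(\infty)\leq t\wedge\tau^-(1/k)\}$ do \emph{not} increase to $\{\tau^+(\infty)\leq t\wedge\tau^-(0)\}$. Their union is $\{\tau^+(\infty)\leq t\}\cap\{\exists k:\ \tau^-(1/k)\geq\tau^+(\infty)\}$, i.e.\ the set of paths that explode before $t$ while staying above \emph{some fixed} positive level. It misses exactly the paths that dip below every level $1/k$ --- approaching the absorbing state $0$ without ever hitting it --- and then climb back up and explode before time $t$, possibly with infinitely many such oscillations accumulating at the explosion time. Since $\tau^-(0)$ is the hitting time of $0$, not the time at which the running infimum vanishes, nothing in your estimate controls this scenario, and no hypothesis of the proposition rules it out a priori (neither \ref{A1} nor \ref{A2} is assumed here).

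This residual scenario is precisely what the paper's argument is built to exclude, and it is where the real work (and the new treatment of the stable jumps) lies. The paper introduces $\mathcal{A}(\eps)=\sup_{\eps^{-1}\leq y\leq2\eps^{-1}}\E_y\left[e^{-\lambda\tau^+(\infty)};\tau^+(\infty)<\infty\right]$, combines the dichotomy \eqref{two_cases} with the strong Markov property, bounds the probability of overshooting the band $[\eps^{-1},2\eps^{-1}]$ separately for the two jump measures (the estimate $A_\rho\leq\eps^{\alpha}\int_0^\infty(z^{\alpha}\wedge z)\rho(dz)$ with $\alpha<1+\brho$ is the genuinely new point required by the stable jumps), and uses that descents take time, namely $\E_y\left[e^{-\lambda\tau^-(b)};\tau^-(b)<\infty\right]\leq C(b)<1$ for $y\geq 2b$. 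The resulting fixed-point inequality $\mathcal{A}(\eps)\leq C(b)\left(\mathcal{A}(\eps)+\eps^{\alpha}C_{\pi,\rho}\right)$ forces $\mathcal{A}(\eps)\to0$, and this Laplace-transform renewal structure automatically discounts any accumulation of down-and-up crossings --- the very paths your monotone-convergence step loses. That your proof never has to confront the jump measures at all (they are hidden inside $G_a$) is a symptom of this: the overshoot/renewal analysis, which is the actual content of the paper's proof, has been bypassed rather than replaced. To repair your route you would need, for instance, a quantitative bound showing $\sup_{y\leq c}\P_y\left(\tau^+(\infty)\leq t\right)\to0$ as $c\to0$, which is of essentially the same difficulty as the paper's argument.
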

\begin{proof}(\cite[Theorem 4.1i))]{companion})
Similarly as in the proof of \cite[Theorem 4.1i))]{companion}, for $b^{-1}$ small enough and $\eps$ satisfying $0 < b < \eps^{-1}$, we have for any $\eps^{-1} \leq y \leq 2 \eps^{-1}$, 
\begin{equation}\label{two_cases}  
\P_{y}(\tau^+(\infty)=\infty \text{ or } \tau^-(b) < \tau^+(\infty)<\infty )=1. \end{equation}
Now, we have to take into account that $Y$ has two different types of positive jumps, from the PPMs $\widetilde{Q}$ or $R$. Let $$J_\pi(\eps) = \{\text{A jump associated to } \widetilde{Q} \text{ occurs at }\tau^+(\eps^{-1})\},$$
$$J_\rho(\eps) = \{\text{A jump associated to } R \text{ occurs at }\tau^+(\eps^{-1})\}.$$
Let us fix $\lambda>0$ and introduce the following real number:
$$ \mathcal{A}(\eps) := \sup_{\eps^{-1} \leq y \leq 2 \eps^{-1}}
 \E_y \left[ e^{-\lambda \tau^+(\infty)}; \tau^+(\infty)<\infty \right] . $$
Let $\Delta X_{\tau^+(\eps^{-1})} = X_{\tau^+(\eps^{-1})}-X_{\tau^+(\eps^{-1})-}$. For any $\eps < 1$, $y \leq \eps^{-1}$, we have
\begin{align*}\P_y(X_{\tau^+(\eps^{-1})}>2\eps^{-1}) \leq 
\P_y(\Delta X_{\tau^+(\eps^{-1})}>\eps^{-1}) = A_\pi + A_\rho,
\end{align*}
where
\begin{align*}
A_\pi :=\P_y(J_\pi(\eps),\ \Delta X_{\tau^+(\eps^{-1})}>\eps^{-1}) &= \P_y\left(J_\pi(\eps),\ \left(\Delta X_{\tau^+(\eps^{-1})}\right)^2\wedge \Delta X_{\tau^+(\eps^{-1})}>\eps^{-1} \right) \\
& \leq \eps \int_0^\infty (z\wedge z^2)\pi(dz),
\end{align*}
and for $\alpha>0$ such that $\alpha< 1 + \mathfrak{b}<1$,
\begin{align*}
A_\rho := \P_y(J_\rho(\eps),\ \Delta X_{\tau^+(\eps^{-1})}>\eps^{-1}) &= \P_y\left(J_\rho(\eps),\ \left(\Delta X_{\tau^+(\eps^{-1})}\right)^{\alpha}\wedge \Delta X_{\tau^+(\eps^{-1})}>\eps^{-\alpha} \right)\\
&  \leq \eps^{\alpha} \int_0^\infty (z^{\alpha}\wedge z)\rho(dz)
\end{align*}
where we used the Markov inequality.

Using Equation \eqref{two_cases} and the strong Markov property, we get, for any $\eps^{-1} \leq y \leq 2 \eps^{-1}$:
\begin{align*}
 \E_y \left[ e^{-\lambda \tau^+(\infty)}; \tau^+(\infty)<\infty \right]&=
 \E_y \left[ e^{-\lambda \tau^+(\infty)}; \tau^-(b) <\tau^+(\infty)<\infty \right]\\
 & \leq \E_y \left[ e^{-\lambda \tau^-(b)} \E_{X_{\tau^-(b)}}\left[e^{-\lambda \tau^+(\infty)}; \tau^+(\infty)<\infty\right]; \tau^-(b) <\infty\right].
\end{align*}
Using again the strong Markov property, we get for all 
$x\leq b\leq \eps^{-1}$
\begin{align*}
\E_{x}\left[e^{-\lambda \tau^+(\infty)}; \tau^+(\infty)<\infty\right] & = \E_y \left[ e^{-\lambda \tau^+(\infty)}; \tau^-(b) <\tau^+(\infty)<\infty \right]\\
&=\E_y \left[ e^{-\lambda \tau^+(\infty)}; \tau^-(b) <\tau^+(\eps^{-1})<\tau^+(\infty)<\infty \right]\\
&= \E_{x} \left[ e^{-\lambda \tau^+(\eps^{-1})} \E_{X_{\tau^+(\eps^{-1})}}\left[e^{-\lambda \tau^+(\infty)}; \tau^+(\infty)<\infty\right]; \tau^+(\eps^{-1}) <\infty\right]\\
& \leq \mathcal{A}(\eps)+\eps^{\alpha}\underbrace{\left(\eps^{1-\alpha} \int_0^\infty (z\wedge z^2)\pi(dz) +  \int_0^\infty (z^{\alpha}\wedge z)\rho(dz)\right)}_{C_{\pi,\rho}},
\end{align*}
where the last inequality is obtained by considering the event $\lbrace \eps^{-1}\leq X_{\tau^+(\eps^{-1})}\leq 2\eps^{-1}\rbrace$ and its complement.
Finally, combining the last two inequalities, we obtain
\begin{align*}
 \E_y \left[ e^{-\lambda \tau^+(\infty)}; \tau^+(\infty)<\infty \right] & \leq \E_y \left[ e^{-\lambda \tau^-(b)}; \tau^-(b) <\infty \right] \left(\mathcal{A}(\eps)+\eps^{\alpha} C_{\pi,\rho}\right).
\end{align*}
But there exists $C(b)<1$ such that for $2b \leq y$, 
$$ \E_y \left[ e^{-\lambda \tau^-(b)}; \tau^-(b) <\infty \right]<C(b). $$
Otherwise we would have
$$ \lim_{y \to \infty} \E_y \left[ e^{-\lambda \tau^-(b)}; \tau^-(b) <\infty \right] =1, $$
and thus $\tau^-(b)$ would converge to $0$ when the initial condition of the process goes to $\infty$ which would contradict our assumptions on the regularity
of the negative jumps. Hence, as for $\eps$ small enough, $2b \leq \eps^{-1}$, we obtain for such an $\eps$ that
$ \mathcal{A}(\eps) \leq \frac{C(b)C_{\pi,\rho}\eps^{\alpha}}{1-C(b)}. $
We thus deduce that
$$ \lim_{y \to \infty}\E_y \left[ e^{-\lambda \tau^+(\infty)}; \tau^+(\infty)<\infty \right]=0. $$
Now, let us take $x,\mu>0$. Then, there exists $N_0$ such that for any $N \geq N_0$,
$$ \E_N \left[ e^{-\lambda \tau^+(\infty)}; \tau^+(\infty)<\infty \right]\leq \mu. $$
Hence,
$$ \E_x \left[ e^{-\lambda \tau^+(\infty)}; \tau^+(\infty)<\infty \right]
\leq \E_x \left[ \E_{X_{\tau^+(N_0)}} \left[ e^{-\lambda \tau^+(\infty)}; \tau^+(\infty)<\infty\right] \right]\leq \mu $$
and thus for all $x>0$
$$ \E_x \left[ e^{-\lambda \tau^+(\infty)}; \tau^+(\infty)<\infty \right]=0,$$
which completes the proof.
\end{proof}
\subsection{Proof of \cite[Theorem 3.3]{companion} with time dependent positive jump rate}\label{app:th33withtimeinjumps}
Let $t>0$ and consider the process $Y$ solution to
\begin{align}\label{eq:EDS-pt}
Y_t= Y_0+ \int_0^t g(Y_s) ds+ \int_0^t\sqrt{2 \sigma^2(Y_s) }dB_s&+ \int_0^t\int_{\mathbb{R}_+}\int_0^{p_t(Y_{s^-},s,z)}z\widetilde{Q}(ds,dz,dx)\\
&+ \int_0^t  \int_0^{r(Y_{s^-})} \int_0^1 (\theta-1)Y_{s^-}N(ds,dx,d\theta), \nonumber
\end{align}
where $\widetilde{Q}$, $B$ are the same as in \eqref{X_sans_sauts}, $N$ is a PPM on $\R_+\times \R_+\times [0,1]$ with intensity $ds\otimes dx\otimes \kappa(d\theta)$, and for $x,z \geq 0$ and $s \leq t$, we assume that $ p_t(\cdot,s,z)$ is non-decreasing on $\mathbb{R}_+$ and that there exists a finite and positive constant $\mathfrak{c}_t$  depending on $t$ such that
\begin{equation} \label{bound_f3} p_t(x,s,z)=p(x)(1+\mathfrak{p}(x,s,z)) \leq p(x) (1+ \mathfrak{c}_t z). \end{equation}
 For the existence of a unique pathwise solution to \eqref{eq:EDS-pt}, we refer to Appendix \ref{app:prop_sol_SDE_auxi}.
\begin{thm}\cite[Theorem 3.3i) \al{ and iii)}]{companion}
Suppose that \al{$\int_{\R_+} z\pi(dz)<\infty$} and that \ref{ass_E} holds. Let $Y$ be a pathwise unique solution to \eqref{eq:EDS-pt}. Then, 
\begin{enumerate}[label=\roman*)]
\item If there exist $a>1$ and a non-negative function $f$ on $\mathbb{R}_+$ with $\E\left[\Theta^{1-a}\right]<\infty$ and
\begin{align}\label{eq:SN0-zpifini}
\frac{g(x)}{x}-a\frac{\sigma^2(x)}{x^2} = f(x) + o(\ln(x)),\ (x\rightarrow 0),
\end{align}
then $\mathbb{P}_x(\tau^-(0)<\infty)=0$ for all $x>0$. 
\item \al{If \ref{A2} holds and $r(x)>0$ for all $x\geq 0$, then for any $x>0$, $\P_x(\tau^-(0)<\infty)>0$. }
\end{enumerate} 
\end{thm}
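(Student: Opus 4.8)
The plan is to follow the proofs of \cite[Theorem 3.3 i) and iii)]{companion} and of their stable-jump version given in Appendix~\ref{app:th33-with-stable-jumps}; the single new feature here is that the positive-jump rate $p_t(x,s,z)$ depends both on the time $s$ and on the jump size $z$. The entire difficulty is to keep this dependence under control, and the bound \eqref{bound_f3}, namely $p_t(x,s,z)\le p(x)(1+\mathfrak{c}_t z)$ with $\mathfrak{c}_t<\infty$ once the horizon $t$ is fixed, is exactly what allows it.

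For point i) I would first upgrade the martingale of Lemma~\ref{prop:martingale-co} to the inhomogeneous setting. An Itô-with-jumps computation identical to the one in its proof, in which one simply replaces $p(x)\pi(dz)$ by $p_s(x,s,z)\pi(dz)$ in \eqref{eq:equivIa}, shows that for $T=\tau^-(c)\wedge\tau^+(b)$ the process
\[
s\longmapsto Y_{s\wedge T}^{1-a}\exp\!\left(\int_0^{s\wedge T}\widehat G_a(Y_u,u)\,du\right)
\]
is a local martingale, where $\widehat G_a(x,s)$ is $G_a(x)$ with the term $-p(x)I_a(x)$ recomputed from $p_s(\cdot,s,\cdot)$ (recall $C_a=0$, as there are no stable jumps under \ref{ass_E}). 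Since $p(0)=0$ and $p$ is locally Lipschitz we have $p(x)\le Lx$ near $0$, so the bound $p_s(x,s,z)\le p(x)(1+\mathfrak{c}_t z)$ together with $\int(z^2\wedge z^3)\pi(dz)<\infty$ makes the jump contribution to $\widehat G_a(u,s)$ an $o(\ln u)$ as $u\to0$, \emph{uniformly} in $s\le t$. Combined with the hypothesis \eqref{eq:SN0-zpifini} this yields a uniform-in-time analogue of \ref{A1} near $0$, hence the dichotomy $\P_\varepsilon(\tau^-(0)=\infty\ \text{or}\ \tau^+(b)<\tau^-(0))=1$ exactly as in \eqref{eq:dichotomie}. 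I would then remove the state-dependence of the division rate by the same coupling as in Appendix~\ref{app:th33-with-stable-jumps}: for each $N$ I introduce $\tY$ with constant division rate $r_{[0,N]}$ and the same time-dependent jump rate, which by the comparison results of \cite{companion,palau2018branching} lies below $Y$ up to $\tau^-(0)\wedge\tau^+(N)$; running the argument after \eqref{eq:dichotomie} and letting $N\to\infty$ gives $\P_\varepsilon(\tau^-(0)=\infty)=1$, and therefore $\P_x(\tau^-(0)<\infty)=0$ for every $x>0$.

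For point ii) the driving mechanism is the large noise \ref{A2} of the diffusion near $0$, and the key observation is that the positive jumps are negligible there: since $p_t(x,s,z)\le p(x)(1+\mathfrak{c}_t z)$ and $p(x)\to0$ as $x\to0$, the jump rate vanishes close to $0$ uniformly in $s\le t$. I would first couple $Y$ from above with a process $\bar Y$ of the type \eqref{eq:barY}, having a constant (hence state-independent) \emph{lower} bound for the division rate, a drift dominating $g$, and the same time-dependent jump term, so that $\bar Y\ge Y$ on $[0,t]$; since $0\le Y\le\bar Y$, it then suffices to prove that $\bar Y$ reaches $0$ with positive probability. For $\bar Y$, whose division rate no longer depends on the state, I would reproduce the proof of \cite[Theorem 3.3iii)]{companion}: under \ref{A2} the diffusion drives $\bar Y$ to $0$ with positive probability within a finite window, and the time- and size-dependent jump term only enters through an error that is controlled by \eqref{bound_f3} uniformly in $s\le t$. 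This gives $\P_x(\tau^-(0)<\infty)>0$.

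I expect the main obstacle to be precisely the bookkeeping of the $s$- and $z$-dependence of $p_t$: making the near-$0$ estimate for $\widehat G_a$ in point i) and the absorption estimate in point ii) uniform in $s\le t$, and --- more delicately --- checking that the two couplings preserve the required pathwise ordering despite the state-dependent division rate and the \emph{compensated} positive jumps. The latter is where I would lean on the comparison machinery for these jump SDEs established in \cite{companion,palau2018branching}, which is why I reduce everything to a constant-division-rate auxiliary process before invoking it.
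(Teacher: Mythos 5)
Your treatment of point i) is essentially the paper's own proof: you introduce the time-dependent exponent $G_a^{(s)}$ of \eqref{eq:Ga_s} (your $\widehat G_a$), check that the martingale property of Lemma \ref{prop:martingale-co} survives with $G_a$ replaced by $G_a^{(s)}$, bound the jump contribution near $0$ by a quantity of the form $C_t\left(\limsup_{x\to0^+}p(x)x^{-1}\right)\int_0^\infty(z+z^2)\pi(dz)<\infty$ using \eqref{bound_f3}, and then run the companion's dichotomy-plus-coupling argument of Appendix \ref{app:th33-with-stable-jumps} unchanged. No objection there.

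Point ii), however, has a genuine gap. The paper's proof rests on two devices that your proposal never produces. First, one extends \cite[Theorem 3.3ii)]{companion} to the rate $p_t$ via the pointwise inequality $G_a^{(s)}(x)\geq G_a(x)$ for $a<1$, which holds because $\mathfrak{p}\geq 0$ and $(1+zx^{-1})^{1-a}-1-(1-a)zx^{-1}\leq 0$ when $a<1$. Second, and this is the heart of the matter, the size dependence of the rate is removed by the \emph{product form} in \eqref{bound_f3}: writing $p_t(x,s,z)=p(x)(1+\mathfrak{p}(x,s,z))$, the positive jumps of $Y$ can be viewed as jumps occurring at rate governed by $p(x)$ whose sizes are drawn from the tilted measure $(1+\mathfrak{p}(x,s,z))\pi(dz)$; since the proof of \cite[Theorem 3.3iii)]{companion} only performs computations on the jump \emph{rate}, never on the jump sizes, it then applies verbatim.

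Your substitute for this second step does not work as stated. The coupling with a constant-division-rate process $\bar Y$ attacks the wrong difficulty: state dependence of $r$ is already allowed in \cite[Theorem 3.3]{companion} (the SDE \eqref{SDE_Y_diff_ct} has rate $r(Y_{s^-})$), and your $\bar Y$ still carries the time- and size-dependent rate $p_t$, so after the coupling you face exactly the original problem. Moreover, your supporting claim that "the jump rate vanishes close to $0$ uniformly in $s\leq t$" is false in general: the total jump intensity is $\int_{\R_+}p_t(x,s,z)\pi(dz)$, and $\pi$ may have infinite total mass (e.g. $\pi(dz)=\alpha_\pi z^{-1-\beta_\pi}dz$ with $\beta_\pi\in(1,2)$, as in the running example), so this intensity is not bounded by a multiple of $p(x)$ and does not vanish as $x\to 0$. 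Handling a size-dependent rate genuinely requires either the product-form reinterpretation above, or a splitting of the jumps at $z=x$ together with a change of Poisson measure, which is what the paper does in \eqref{eq:twojumpint} in the proof of Theorem \ref{th:20-6.2}. Asserting that the jump term "only enters through an error controlled by \eqref{bound_f3} uniformly in $s\leq t$" is, at that point, assuming precisely what has to be proven.
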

\begin{proof}\cite[Theorem 3.3i) \al{and iii)}]{companion}
\begin{enumerate}[label=\roman*)]
\item Let us define for $a>1$
\begin{align}\nonumber\label{eq:Ga_s}
G_a^{(s)}(x) : = & (a-1)\frac{g(x)}{x}-a(a-1)\frac{\sigma^2(x)}{x^2}-r(x)\E[\Theta^{1-a}-1]\\
& -\int_{\mathbb{R}_+}p_t(x,s,z) \left((zx^{-1}+1)^{1-a}-1-(1-a)zx^{-1}\right)\pi(dz).
\end{align}
First, applying Itô's formula with jumps, we can check that Lemma \ref{prop:martingale-co} (\cite[Lemma 7.1 and Equation (7.1)]{companion}) holds for $Y$, replacing $G_a$ by $G_a^{(s)}$. Next, 
Using \eqref{bound_f3} we can show (see the proof of \cite[Remark 3.2]{companion}) that under \eqref{cond_moment2_pi}
\begin{align*}
\limsup_{x\rightarrow 0^+} \left( x^{-2}\int_0^\infty p_t(x,s,z) z^2\left(\int_0^1 (1+zx^{-1}v)^{-1-a}(1-v)dv\right)\pi(dz) \right)\\
\leq C_t \left(\limsup_{x\rightarrow 0^+}p(x)x^{-1}\right)\left(\int_0^\infty (z+z^2)\pi(dz)\right)< \infty
\end{align*}
where $C_t$ is a finite constant. Hence, the integral corresponding to the positive jumps is bounded in the neighborhood of $0$, and Condition \eqref{eq:SN0-zpifini} is enough to control the behaviour of the process around $0$. 
The proof of \cite[Theorem 3.3i)]{companion} is thus unchanged and the results hold also for processes whose rate of positive jumps satisfy \eqref{bound_f3}.
\item \al{First, we have to prove that \cite[Theorem 3.3ii)]{companion} still holds for processes whose rate of positive jumps satisfy \eqref{bound_f3}. Using that for $a<1$ and for all $x,s\geq 0$, $G_a^{(s)}(x)\geq G_a(x)$, the proof of \cite{companion} is unchanged. Next, because of the product form of the jump function, the positive jumps of $Y$ can be seen as occurring at rate $p(x)$, with a size given by the measure $(1+\mathfrak{p}(x,s,z))\pi(dz)$. Therefore, as the proof of \cite[Theorem 3.3iii)]{companion} relies only on computation on the jump rate, and not on the size of the jumps, the proof is unchanged. }
\end{enumerate}
\end{proof}
\subsection{Proof of \cite[Theorem 4.1.i)]{companion} with time dependent positive jump rate}\label{app:th41withtimeinjumps}
Let $t>0$ and consider again the process $Y$ solution to \eqref{eq:EDS-pt} under condition \eqref{bound_f3}.
\begin{thm}\cite[Theorem 4.1.i)]{companion}
Suppose that Assumption \ref{ass_E1} holds and that $\int  z^3\pi(dz)<\infty$ and let $Y$ be a pathwise unique solution to \eqref{eq:EDS-pt}. If \ref{eq:SNinfinity-1} holds,
then $\mathbb{P}_x(\tau^+(\infty)<\infty)=0$ for all $x>0$. 
\end{thm}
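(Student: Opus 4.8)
The plan is to reproduce the proof of Proposition \ref{prop_B3} (that is, \cite[Theorem 4.1i)]{companion} with stable jumps), making only the modifications forced by the time-inhomogeneous, size-dependent jump rate $p_t$. Note first that here there are no stable jumps (Assumption \ref{ass_E} imposes $c_\mathfrak{b}=0$, and the only positive jumps in \eqref{eq:EDS-pt} come from $\widetilde{Q}$), so the entire analysis of the stable measure $\rho$ in Proposition \ref{prop_B3} simply disappears. The new work concentrates on the $\widetilde{Q}$-jumps, whose rate $p_t(x,s,z)\le p(x)(1+\mathfrak{c}_t z)$ is now size-biased by the factor $(1+\mathfrak{c}_t z)$ coming from \eqref{bound_f3}.

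First I would establish the dichotomy \eqref{two_cases}: for $b^{-1}$ small enough, $0<b<\eps^{-1}$, and every $\eps^{-1}\le y\le 2\eps^{-1}$,
\[
\P_{y}\bigl(\tau^+(\infty)=\infty \text{ or } \tau^-(b) < \tau^+(\infty)<\infty\bigr)=1.
\]
As already observed in Appendix \ref{app:th33withtimeinjumps}, the martingale of Lemma \ref{prop:martingale-co} persists for the solution of \eqref{eq:EDS-pt} once $G_a$ is replaced by the time-dependent $G_a^{(s)}$ of \eqref{eq:Ga_s}; since \ref{eq:SNinfinity-1} provides $a<1$ and a nonnegative $f$ (with, under Assumption \ref{ass:gx-q}, $C_a=0$ but the general $C_a$ retained), optional stopping of the corresponding $Z^{(a)}_{t\wedge T}$ on $T=\tau^-(b)\wedge\tau^+(\cdot)$ yields exactly this dichotomy, verbatim as in the stable case.

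The heart of the argument, and the place where $\int z^3\pi(dz)<\infty$ enters, is the overshoot estimate. Following Proposition \ref{prop_B3}, set $\mathcal{A}(\eps):=\sup_{\eps^{-1}\le y\le 2\eps^{-1}}\E_y\bigl[e^{-\lambda\tau^+(\infty)};\tau^+(\infty)<\infty\bigr]$ and, for $y\le\eps^{-1}$, control the probability $\P_y\bigl(\Delta X_{\tau^+(\eps^{-1})}>\eps^{-1}\bigr)$ that the jump crossing the level $\eps^{-1}$ overshoots above $2\eps^{-1}$. Because the effective size distribution of a $\widetilde{Q}$-jump is now weighted by $(1+\mathfrak{c}_t z)$, a Markov/moment bound of the kind used for $A_\pi$ in Proposition \ref{prop_B3} produces integrals of the form $\int(z\wedge z^2)(1+\mathfrak{c}_t z)\pi(dz)$: the extra power of $z$ created by the size-biasing is precisely what is absorbed by the hypothesis $\int z^3\pi(dz)<\infty$ (together with the $\int(z^2\wedge z^3)\pi(dz)<\infty$ contained in Assumption \ref{ass_E}), giving a bound $\P_y\bigl(\Delta X_{\tau^+(\eps^{-1})}>\eps^{-1}\bigr)\le C_{\pi,t}\,\eps^{\alpha}$ for some $\alpha>0$ and a finite $C_{\pi,t}$ depending on $t$.

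With these two ingredients the recursion closes exactly as in Proposition \ref{prop_B3}: splitting on whether $X_{\tau^+(\eps^{-1})}$ lands in $[\eps^{-1},2\eps^{-1}]$ or overshoots, the strong Markov property gives, for $x\le b$, $\E_x\bigl[e^{-\lambda\tau^+(\infty)};\tau^+(\infty)<\infty\bigr]\le \mathcal{A}(\eps)+C_{\pi,t}\eps^{\alpha}$, and combining with the dichotomy produces $\mathcal{A}(\eps)\le C(b)\bigl(\mathcal{A}(\eps)+C_{\pi,t}\eps^{\alpha}\bigr)$ with $C(b)<1$ (the strict inequality using the regularity of the fragmentation jumps exactly as in Proposition \ref{prop_B3}), whence $\mathcal{A}(\eps)\to 0$ as $\eps\to0$. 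A final use of the strong Markov property at $\tau^+(N_0)$ propagates $\E_x\bigl[e^{-\lambda\tau^+(\infty)};\tau^+(\infty)<\infty\bigr]=0$ to all $x>0$, which gives $\P_x(\tau^+(\infty)<\infty)=0$ and proves the claim. I expect the overshoot estimate under the size-biased, time-dependent rate $p_t$ to be the only genuinely new point; the martingale step and the recursion are direct transcriptions of the stable-jump proof.
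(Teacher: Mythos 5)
Your overall architecture (replay the proof of Proposition \ref{prop_B3} with the stable-jump analysis deleted and the $\widetilde{Q}$-jumps handled under the size-biased rate) is the right one --- it is what the paper's proof does implicitly by declaring the companion's argument ``unchanged'' after one preliminary estimate. But you have mislocated that estimate, and the step you declare ``verbatim'' is exactly where the genuine new work lies. The dichotomy \eqref{two_cases} comes from the martingale of Lemma \ref{prop:martingale-co} with $G_a$ replaced by $G_a^{(s)}$ of \eqref{eq:Ga_s}, whose jump term
$$-\int_{\R_+}p_t(x,s,z)\left((1+zx^{-1})^{1-a}-1-(1-a)zx^{-1}\right)\pi(dz)$$
is computed with the rate $p_t$, whereas Condition \ref{eq:SNinfinity-1} is a statement about the rate-$p$ quantity $p(u)I_a(u)$. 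To conclude that \ref{eq:SNinfinity-1} still controls $G_a^{(s)}$ near infinity, uniformly in $s\leq t$, one must bound the discrepancy created by the factor $(1+\mathfrak{c}_t z)$ in \eqref{bound_f3}. This is precisely what the paper's proof consists of: by Taylor's formula the jump term is a constant multiple of $x^{-2}\int_{\R_+} p_t(x,s,z)z^2\left(\int_0^1(1+zx^{-1}v)^{-1-a}(1-v)dv\right)\pi(dz)$, which by \eqref{bound_f3} is at most $C_t\,p(x)x^{-2}\int_{\R_+}(z^2+z^3)\pi(dz)$, and this has finite $\limsup$ at infinity thanks to \eqref{eq:limsup_sigma} (Assumption \ref{ass_E1}) and $\int_{\R_+}(z^2\vee z^3)\pi(dz)<\infty$. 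This is where the hypothesis $\int z^3\pi(dz)<\infty$ is actually consumed; without this estimate (a sign argument could replace it only for $a\in(0,1)$, while \ref{eq:SNinfinity-1} allows any $a<1$), your appeal to optional stopping ``verbatim as in the stable case'' is unjustified --- in Proposition \ref{prop_B3} the rate appearing in the martingale and the rate appearing in \ref{eq:SNinfinity-1} coincide, whereas here they do not.

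By contrast, the place where you spend the third moment --- the overshoot estimate --- needs only the standing assumptions: size-biasing turns $\int(z\wedge z^2)\pi(dz)$ into $\int(z\wedge z^2)(1+\mathfrak{c}_tz)\pi(dz)\leq\int(z\wedge z^2)\pi(dz)+\mathfrak{c}_t\int(z^2\wedge z^3)\pi(dz)$, which is already finite under Assumptions \ref{ass_A} and \ref{ass_E}, so that step really is a routine transcription (and is partly why the paper may call the remainder of the companion's proof unchanged). Your proposal is repairable --- insert the boundedness estimate above into the dichotomy step --- but as written it has a gap at the one point the statement's hypotheses are designed to address. The parenthetical invoking Assumption \ref{ass:gx-q} is also out of place: the relevant setting is Assumption \ref{ass_E}, under which $c_{\mathfrak{b}}=0$ and hence $C_a=0$ automatically.
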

\begin{proof}\cite[Theorem 4.1.i)]{companion}
Let us recall the definition of $G_a^{(s)}$ in \eqref{eq:Ga_s}.
We have using Taylor's formula and \eqref{bound_f3} that
\begin{align*}
&\limsup_{x\rightarrow\infty}\int_{\mathbb{R}_+}p_t(x,s,z) \left((zx^{-1}+1)^{1-a}-1-(1-a)zx^{-1}\right)\pi(dz)\\
=&a(1-a)\limsup_{x\rightarrow\infty}x^{-2}\int_{\mathbb{R}_+}p_t(x,s,z) z^2\left(\int_0^1(1+zx^{-1}v)^{-1-a}(1-v)dv\right)\pi(dz)\\
&\leq C_t\limsup_{x\rightarrow\infty}\frac{p(x)}{x^2}\int_{\mathbb{R}_+}(z^2+z^3) \pi(dz)<\infty
\end{align*}
for some $C_t>0$, combining \ref{ass_E}, \eqref{eq:limsup_sigma} and the fact that $\int_{\mathbb{R}_+}(z^2\vee z^3)\pi(dz)<\infty$. Therefore, the integral corresponding to the positive jumps is bounded for large values of $x$, and Condition \ref{eq:SNinfinity-1} is enough to control the behaviour of the process near infinity. 
The proof of \cite[Theorem 4.1.i)]{companion} is thus unchanged and the results hold also for processes whose rate of positive jumps satisfies \eqref{bound_f3}.
\end{proof}

\subsection{Proof of \cite[Theorem 6.2]{companion} with time dependent positive jump rate.}\label{app:th62withtimeinjumps}

Let $t>0$ and consider again the process $Y$ solution to \eqref{eq:EDS-pt}
where for $x>0$, $z \geq 0$ and $s \leq t$,
$
 p_t(x,s,z) \leq p(x) (1+ \mathfrak{c}_t z)$ and $p_t(x,s,z) \leq p(x) (1+ zx^{-1})
$
with $\mathfrak{c}_t$ a finite and positive constant depending on $t$.
\begin{thm}\cite[Theorem 6.2.ii)]{companion}\label{th:20-6.2}
Suppose that Assumption \ref{ass_E1} and \ref{eq:SNinfinity-1} hold and that $\int z^3\pi(dz)<\infty$ and let $Y$ be a pathwise unique solution to \eqref{eq:EDS-pt}. Assume also that \ref{eq:LSG} holds (with $g,r$ instead of $\hat{g},\hat{r}$).
\begin{itemize}
\item[(i)] If \ref{A1} holds, then $Y$ converges in law as $t$ tends to infinity to $Y_\infty$ and for every bounded and measurable function $f$, almost surely
$$
\lim_{t\rightarrow+\infty}\frac{1}{t}\int_0^t f(X_s)ds=\E\left[f(X_\infty)\right].
$$
\item[(ii)] If \ref{A2} holds, then $\P_x\left(\exists t<\infty,Y_t=0\right)=1$.
\end{itemize}
\end{thm}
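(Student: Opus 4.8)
The plan is to follow the proof of \cite[Theorem 6.2]{companion} essentially verbatim, the only delicate point being the control of the terms coming from the now time-dependent rate of positive jumps $p_t$. As in Appendices \ref{app:th33withtimeinjumps} and \ref{app:th41withtimeinjumps}, the strategy is to show that the integral against $\pi$ associated with the positive jumps stays controlled in the relevant regime, so that the qualitative behaviour of $Y$ is dictated by the drift, the diffusion and the fragmentation exactly as in the time-homogeneous case. The two bounds assumed on $p_t$, namely $p_t(x,s,z)\le p(x)(1+\mathfrak{c}_t z)$ and $p_t(x,s,z)\le p(x)(1+zx^{-1})$, dominate the jump rate by time-homogeneous quantities, and this is what will let us transfer estimates uniformly in $s\le t$.

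For point (i), I would first invoke the already established extensions of the auxiliary results: under \ref{eq:SNinfinity-1} the process does not explode (Appendix \ref{app:th41withtimeinjumps}), and under \ref{A1} it does not reach $0$ in finite time (Appendix \ref{app:th33withtimeinjumps}), so that $Y$ lives on $(0,\infty)$ for all times. The heart of the matter is then an ergodicity argument in the spirit of Lemma \ref{lemma:Lyapunov_min} and of \cite{companion}: condition \ref{eq:LSG} furnishes, for large $x$, a strictly negative drift for a suitable Lyapunov function, while a minorization on compact sets (obtained by comparison with a time-homogeneous dominating process) yields irreducibility. The positive-jump contribution to this drift is controlled, via $p_t(x,s,z)\le p(x)(1+zx^{-1})$ together with $\int z^3\pi(dz)<\infty$ and \ref{ass_E1}, exactly as in Appendix \ref{app:th41withtimeinjumps}, so that it remains subordinate to the terms governed by \ref{eq:LSG}. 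Combining the Foster--Lyapunov inequality with the minorization as in \cite{companion} then gives the convergence in law of $Y$, and the associated ergodic theorem yields the stated almost-sure time-average.

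For point (ii), under \ref{A2} the extension of \cite[Theorem 3.3iii)]{companion} established in Appendix \ref{app:th33withtimeinjumps} shows that, starting from any level below a fixed threshold $u_0$, the process hits $0$ in finite time with a probability bounded away from $0$. I would then use \ref{eq:LSG} once more: the strictly negative drift at infinity, together with the non-explosion just recalled, forces $Y$ to return below $u_0$ infinitely often almost surely. A strong Markov / renewal argument — making an absorption attempt at each successive down-crossing of $u_0$, each attempt succeeding with a uniformly positive probability — then upgrades positive-probability absorption into almost-sure absorption, which is precisely $\P_x(\exists t<\infty, Y_t=0)=1$.

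The main difficulty I anticipate lies in reconciling the genuine time-inhomogeneity of $Y$ — inherited from the dependence of $p_t(\cdot,s,\cdot)$ on $s$ — with conclusions (convergence in law, almost-sure absorption) that are classically phrased for time-homogeneous Markov processes. The remedy is precisely the two-sided control \eqref{bound_f3}: the positive-jump rate is trapped between time-homogeneous rates, so the Lyapunov drift estimate, the minorization and the absorption lower bound all hold with constants that do not depend on $s$, and the monotone couplings already used throughout Section \ref{sec:LDCD} allow one to transfer the conclusions of the time-homogeneous results of \cite{companion} to $Y$. Verifying that these constants are genuinely uniform in $s\le t$, and not merely finite for each fixed $s$, is the delicate bookkeeping that the proof must carry out.
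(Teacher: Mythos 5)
Your plan for point (ii) follows the paper's skeleton, but your plan for point (i) takes a route that genuinely breaks down. You propose Harris-type ergodicity: a Foster--Lyapunov drift plus a minorization on compact sets ``in the spirit of Lemma \ref{lemma:Lyapunov_min}''. The minorization in Lemma \ref{lemma:Lyapunov_min} is taken with respect to $\nu=\delta_0$, and its proof rests on \ref{A2}: from anywhere in the sublevel set the process hits $0$ with uniformly positive probability. Point (i), however, is stated under \ref{A1}, under which $\P_x\left(\tau^-(0)<\infty\right)=0$ (Appendix \ref{app:th33withtimeinjumps}), so the Dirac-at-zero minorization is unavailable; and monotone comparison with a time-homogeneous dominating process yields stochastic domination, not a common component of the transition kernels started from different points. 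Since $\sigma$ may be degenerate (only H\"older continuity with $\sigma(0)=0$ is required) and the motion may be purely jump-driven, the transition kernels from distinct starting points can be mutually singular, so no Doeblin/small-set condition is available in general. The paper's proof of (i) needs no minorization at all: it combines Lemma \ref{lemma:Ti} (finite expected return times $T_i(x_0)$, obtained from the logarithmic drift provided by \ref{eq:LSG} together with the non-explosion result of Appendix \ref{app:th41withtimeinjumps}) with the return-time ergodic theorem \cite[Theorem 7.1.4]{BN}, exactly as in \cite{companion}.

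For point (ii) your renewal scheme (absorption attempts at successive down-crossings, almost-sure returns via \ref{eq:LSG}) is indeed the structure of the paper's argument, but you skip the one step where all the new work lies: the analogue of \cite[Eq.~(7.18)]{companion}, i.e. a lower bound on the probability that $Y$ has no positive jump and does not exceed a level $y$ before $t\wedge\tau^+(y)\wedge\tau^-(x)$, which is what drives the process from the return level down into the window $(\varepsilon^{1+\delta},\varepsilon^{1-\delta})$ where the quantitative absorption estimate $\P_z\left(\tau^-(0)\leq\mathfrak{t}(\varepsilon)\right)\geq\mathfrak{p}(\varepsilon)$ (obtained from the bound $G_a^{(s)}\geq(a-1)g(x)/x-a(a-1)\sigma^2(x)/x^2$) applies. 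Because $p_t$ depends on the jump size $z$, ``no positive jump'' is not an exponential event with a rate bounded by a function of the state alone, and neither of the two bounds in \eqref{bound_f3} settles this by itself. The paper's resolution is to split the jump integral into jumps of size $z\leq Y_s$, with rate at most $2p(Y_s)$, and jumps of size $z>Y_s$, with rate at most $2zp(Y_s)/Y_s$, the latter being recast as atoms of a PPM with intensity $ds\otimes dx\otimes z\pi(dz)$; the two no-jump probabilities are then bounded below by $e^{-2tp(y)}$ and $e^{-2tp(y)/y}$ (using that $p$ and $p(x)/x$ are non-decreasing, by Assumptions \ref{ass_A} and \ref{ass_E}), after which one concludes as in \cite{companion}. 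Without this estimate, the down-crossing point need not lie in the absorption window, and the qualitative statement of Appendix \ref{app:th33withtimeinjumps} (pointwise positivity, for the process started at time $0$) does not by itself give the success probability, uniform over the random crossing times and positions, that your Borel--Cantelli step requires.
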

We first prove a lemma, as in \cite{companion}. Let $x_0,t_0>0$. For all $i\geq 0$, we consider the stopping times $T_i(x_0)$, given by $T_0=0$ and for all $i\geq 1$,
$
T_i(x_0) = \inf\lbrace t\geq T_{i-1}(x_0) + t_0, Y_t\leq x_0\rbrace.
$
\begin{lemma}\cite[Lemma 7.2.]{companion}\label{lemma:Ti}
Suppose that Assumption \ref{ass_E1} holds and that $\int  z^3\pi(dz)<\infty$. Then, if \ref{eq:SNinfinity-1}  and \ref{eq:LSG} hold,
then $\E\left[T_i(x_0)\right]<\infty$ for all $i\geq 0$.
\end{lemma}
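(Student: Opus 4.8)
The plan is to induct on $i$, reducing everything to a uniform bound on the expected time for $Y$ to fall back into $[0,x_0]$. Since $T_0(x_0)=0$ has zero expectation, assume $\E[T_{i-1}(x_0)]<\infty$. By definition $Y_{T_{i-1}(x_0)}\le x_0$, and writing $T_i(x_0)=T_{i-1}(x_0)+t_0+S_i$, where $S_i$ is the first time after $T_{i-1}(x_0)+t_0$ at which $Y$ enters $[0,x_0]$, the Markov property gives $\E[S_i\mid\mathcal F_{T_{i-1}(x_0)+t_0}]=\E_{Y_{T_{i-1}(x_0)+t_0}}[\varsigma]$, where $\varsigma:=\inf\{s\ge 0:Y_s\le x_0\}$ denotes the hitting time of $[0,x_0]$. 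So it suffices to bound $\E_y[\varsigma]$ by a function of $y$ that is integrable against the law of $Y_{T_{i-1}(x_0)+t_0}$.

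The heart of the argument is a logarithmic Lyapunov function. Applying It\^o's formula to $\ln Y_s$ along \eqref{eq:EDS-pt}, the drift density at $(Y_s,s)$ equals
\[\frac{g(x)}{x}-\frac{\sigma^2(x)}{x^2}+r(x)\E[\ln\Theta]+\int_{\R_+}\left(\ln\Bigl(1+\frac{z}{x}\Bigr)-\frac{z}{x}\right)p_t(x,s,z)\,\pi(dz).\]
Since $\ln$ is concave the integrand is non-positive, so the whole drift is at most $H(x):=g(x)/x-\sigma^2(x)/x^2+r(x)\E[\ln\Theta]$, which by \ref{eq:LSG} is $<-\eta_1$ for all $x>x_1$; thus $\ln Y$ has a strictly negative drift outside the compact set $[0,x_1]$, uniformly in time, while on the bounded annulus $[x_0,x_1]$ all coefficients are bounded by Assumption \ref{ass_E1}. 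Non-explosion of $Y$ (Appendix \ref{app:th41withtimeinjumps}, guaranteed by \ref{eq:SNinfinity-1} and $\int z^3\pi(dz)<\infty$) keeps $\ln Y_s$ finite, and the moment conditions make the local-martingale part of the expansion a genuine martingale. A standard optional-stopping argument applied to $\ln Y_{s\wedge\varsigma}+\eta_1(s\wedge\varsigma)$ then yields $\E_y[\varsigma]\le C(1+\ln^+ y)$ with $C$ independent of $y$ and of time.

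It remains to check that $\E[\ln^+ Y_{T_{i-1}(x_0)+t_0}]<\infty$, and this is where the deterministic waiting time $t_0$ is used: conditionally on $\mathcal F_{T_{i-1}(x_0)}$ the process starts from $Y_{T_{i-1}(x_0)}\le x_0$ and evolves for the fixed time $t_0$, so by the linear growth of $g$ and $r$ and the moment assumptions of Assumption \ref{ass_E1} its moments, hence $\E[\ln^+ Y_{T_{i-1}(x_0)+t_0}\mid\mathcal F_{T_{i-1}(x_0)}]$, are bounded by a constant depending only on $x_0$ and $t_0$. Combining the three steps gives $\E[S_i]<\infty$, hence $\E[T_i(x_0)]<\infty$, closing the induction. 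The only genuinely new point compared with \cite[Lemma 7.2]{companion} is the time-inhomogeneity of \eqref{eq:EDS-pt}, which enters solely through $p_t(\cdot,s,z)$; the bound \eqref{bound_f3} makes the positive-jump contribution to the $\ln$-drift non-positive and controls it uniformly in $s\le t$, so all constants above are time-independent and the argument of \cite{companion} carries over. The main technical obstacle is precisely this uniform-in-time control, together with justifying the optional-stopping step despite the unbounded positive jumps; both are handled by \eqref{bound_f3} and the hypothesis $\int z^3\pi(dz)<\infty$.
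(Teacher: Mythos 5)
Your skeleton matches the paper's proof: It\^o's formula for $\ln Y$, concavity of the logarithm to discard the compensated (time-dependent) positive-jump term, condition \ref{eq:LSG} to get drift at most $-\eta_1$, the generalized non-explosion result of Appendix \ref{app:th41withtimeinjumps} (this is exactly where \ref{eq:SNinfinity-1} and $\int z^3\pi(dz)<\infty$ enter), and an induction over $i$ closed by a moment bound over the deterministic window $t_0$. However, one step fails as written. Condition \ref{eq:LSG} gives the negative drift only for $x>x_1$, while $\varsigma$ is the hitting time of $[0,x_0]$ and you allow $x_0<x_1$. On the intermediate region $(x_0,x_1)$ the drift of $\ln Y$ has no sign (it is merely bounded, and can be strictly positive, e.g.\ when $g$ dominates $(\alpha x_0+\beta)\E[\ln(1/\Theta)]$), so $\ln Y_{s\wedge\varsigma}+\eta_1(s\wedge\varsigma)$ is \emph{not} a supermartingale there, and optional stopping does not yield $\E_y[\varsigma]\le C(1+\ln^+ y)$. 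Boundedness of the coefficients on $[x_0,x_1]$ only limits how fast $\ln Y$ can move; it gives no bound at all on the expected time needed to cross that region downwards, since the process may be pushed back up indefinitely often. The paper never faces this issue: its supermartingale inequality is applied up to the two-sided stopping time $\tau^-(x_0)\wedge\tau^+(x_1)$ with the \ref{eq:LSG} bound in force on the whole stochastic interval, which amounts to taking the return level $x_0$ at or above the \ref{eq:LSG} threshold; this suffices for the application in Theorem \ref{th:20-6.2}, where the return set may be chosen freely. To repair your argument, either restrict to $x_0\geq x_1$ and say why that is enough, or give a genuine recurrence argument for the intermediate region (for instance a geometric-trials argument exploiting the fragmentation jumps, which occur at rate $r\geq\beta>0$); as written the claimed bound on $\E_y[\varsigma]$ is unsupported.

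Two smaller points. First, even where the drift bound holds, stopping directly at $\varsigma$ needs care: before $\varsigma$ the compensated fragmentation part is only a local martingale, with unbounded intensity $r(Y_s)=\alpha Y_s+\beta$, so it cannot simply be declared a genuine martingale. The paper's order of operations is the safe one: bound $\E[\tau^-(x_0)\wedge\tau^+(x_1)]$ uniformly in $x_1$, then let $x_1\to\infty$; the non-explosion statement $\P_x(\tau^+(\infty)<\infty)=0$ from the generalization of \cite[Theorem 4.1.i)]{companion} is used precisely in this limiting step, not merely to ``keep $\ln Y_s$ finite''. Second, the non-positivity of the positive-jump contribution to the drift of $\ln Y$ comes from concavity plus compensation alone; the bound \eqref{bound_f3} is not what makes it non-positive --- its role is to make the non-explosion theorem of Appendix \ref{app:th41withtimeinjumps} available in the time-inhomogeneous setting.
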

\begin{proof}(\cite[Lemma 7.2]{companion})
Let us choose $0<x_0<x_1$ and let $\tau = \tau^-(x_0)\wedge\tau^+(x_1)$ (recall \eqref{eq:coupling} and \eqref{tau1}).
Following the same steps as in the proof of \cite[Lemma 7.2]{companion} we have,
\begin{align*}
\ln Y_{t\wedge \tau} \leq &  \ln Y_{0}+\int_{0}^{t\wedge \tau}\frac{g(Y_s)}{Y_s}ds-\int_{0}^{t\wedge \tau}
\frac{\sigma^2(Y_s)}{Y_s^2}ds
+\E[\ln \Theta]\int_{0}^{t\wedge \tau}r(Y_s)ds +M_{t\wedge \tau}
\end{align*}
where $(M_{s \wedge \tau}, s \geq 0)$ is a martingale. According to \ref{eq:LSG}, we have
$
\ln(Y_{t\wedge \tau})-\ln(Y_0)\leq -\eta(t\wedge\tau) + M_{t\wedge \tau},
$ and following the proof of \cite[Lemma 7.2]{companion}, we obtain
\begin{align*}
\E_x(\tau)\leq \frac{1}{\eta}\ln\left(\frac{x}{\Theta x_0}\right).
\end{align*}
For the end of the proof, we follow exactly \cite[Lemma 7.2]{companion}, using the generalization of \cite[Theorem 4.1.i)]{companion} proved in Appendix \ref{app:th41withtimeinjumps}.
\end{proof}
\begin{proof}[Proof of Theorem \ref{th:20-6.2}]\cite[Theorem 6.2.]{companion}
The first point follows directly from Lemma \ref{lemma:Ti} and \cite[Theorem 7.1.4]{BN} as in \cite{companion}. 
For the second point, let $a<1$ be such that Condition \ref{A2} is satisfied, and let $\delta<(3-2a)^{-1}$. Following the proof of \cite[Theorem 3.3.ii)]{companion}, replacing $G_a$ by $G_a^{(s)}$ defined in \eqref{eq:Ga_s}, and using that for $a<1$, under \ref{A2}, 
$$G_a^{(s)}(x)\geq (a-1)\frac{g(x)}{x}-a(a-1)\frac{\sigma^2(x)}{x^2} \geq (1-a)\ln(x^{-1})\left(\ln\left(\ln(x^{-1})\right)\right)^{1+\eta_2},$$
we get that there exist $\mathfrak{t}$ and $\mathfrak{p}$ such that for all $\varepsilon$ small enough, and $z \in (\varepsilon^{1+\delta},\varepsilon^{1-\delta})$, 
$$
\P_z\left(\tau^-(0)\leq \mathfrak{t}(\varepsilon)\right)\geq \mathfrak{p}(\varepsilon).
$$
From this result, we can prove \cite[Eq.~(7.29)]{companion}. Next, the proof of \cite[Eq.~(6.3)]{companion} requires \cite[Eq.(7.18)]{companion}. To prove \cite[Eq.(7.18)]{companion} in our case, we have to deal with the dependence on the jump size of the jump rate $p_t$ to obtain a lower bound on the probability to have no positive jump during a time interval of the form $[0,t\wedge\tau^+(y)\wedge \tau^-(x)]$ with $0< x < y$.
Hence the idea is to bound the expectation of the sum of positive jumps on $[0,t\wedge\tau^+(y)\wedge \tau^-(x)]$ and use Markov inequality. Let $T= \tau^+(y)\wedge \tau^-(x)$. Then, for any $y_0\in (x,y)$,
\begin{align*}
\P_{y_0}\left(Y_{t\wedge T}\geq y\right)\leq \P_{y_0}&\left(\int_0^{t\wedge T} g(Y_s)ds +I_Q(t\wedge T)+\int_0^{t\wedge T} \sqrt{2\sigma^2(Y_s)}dB_s\geq y-y_0\right),
\end{align*} 
where
\begin{align}\label{eq:twojumpint}\nonumber
I_Q(t\wedge T)&:=\int_0^{t\wedge T}\int_{\R_+}\int_0^{p_t(Y_s,s,z)}zQ(ds,dz,dx)\\\nonumber
&\leq \int_0^{t\wedge T}\int_{0}^{Y_s}\int_0^{2p(Y_s)}zQ(ds,dz,dx) + \int_0^{t\wedge T}\int_{Y_s}^\infty\int_0^{2zp(Y_s)/Y_s}zQ(ds,dz,dx)\\
&\leq \int_0^{t\wedge T}\int_{\R_+}\int_0^{2p(Y_s)}zQ(ds,dz,dx) + \int_0^{t\wedge T}\int_{\R_+}\int_0^{2zp(Y_s)/\hat{Y}_s}zQ(ds,dz,dx).
\end{align}
Then, as in \cite{companion} p.19, if we denote by $J(t,x,y)$ the event of having no
positive jumps due to the first integral in \eqref{eq:twojumpint}, we have for all $y_0\in (x,y)$,
$
\P_{y_0}(J(t,x,y))\geq e^{-2tp(y)},
$
because $p$ is non-decreasing according to Assumption \ref{ass_A}. Next,
\begin{align*}
& \P_{y_0}\left(Y_{t\wedge T}\geq y, J(t,x,y)\right)\\
& \leq \P_{y_0}\left(\int_0^{t\wedge T} g(Y_s)ds +\int_0^{t\wedge T}\int_0^{2p(Y_s)/Y_s}\int_{\R_+}z\overline{Q}(ds,dx,dz)+\int_0^{t\wedge T} \sqrt{2\sigma^2(Y_s)}dB_s\geq y-y_0\right),
\end{align*}
where $\overline{Q}$ is a PPM with intensity $ds\otimes dx\otimes z\pi(dz)$ (see Appendix \ref{app:generalization SDE}). Considering as before the event that there is no positive jumps associated to $\overline{Q}$, we get
\begin{align*}
\P_{y_0}\left(Y_{t\wedge T}\geq y\right)\leq 2-e^{-2tp(y)}-e^{-2tp(y)/y}+ \P_{y_0}\left(\int_0^{t\wedge T} g(Y_s)ds +\int_0^{t\wedge T} \sqrt{2\sigma^2(Y_s)}dB_s\geq y-y_0\right).
\end{align*}
We conclude as in \cite{companion}, that $\sup_{x \leq v \leq z}g(v)<\infty$ according to Assumption \ref{ass_A} and that $\inf_{x \leq v \leq y}r(v)>0$ holds under Assumption \ref{ass_E}.
\end{proof}

\section{Proof of Proposition \ref{prop_sol_SDE_auxi}}\label{app:prop_sol_SDE_auxi}

The proof is a direct application of \cite[Proposition 1]{palau2018branching}.
Notice that in the statement of \cite[Proposition 1]{palau2018branching}, the functions $b$, $g$ and $h$ do not depend on time, 
unlike the present case of our process. However this additional dependence does not bring any modification to the proofs (which are mostly  derived in the earlier 
paper \cite{li2012strong}).
First according to their conditions (i) to (iv) on page 60, our parameters are admissible.
Second, we need to check that conditions (a), (b) and (c) are fulfilled. It follows directly from Assumption \ref{ass_A}.

\section{Generalization to a division rate depending on the fragmentation parameter $\theta$}\label{app:generalization SDE}

In some proofs, we need to consider a slight generalization 
of the SDE \eqref{SDE_Y_diff_ct} where an individual with trait $x$ dies 
and transmits a proportion $\theta \in [0,1]$ of its trait to its left offspring
at a rate $r(x)\rtheta(\theta)$, that depends on $\theta$,
where $\rtheta:[0,1]\rightarrow \mathbb{R}_+$ is a nonnegative function. 
However, using the properties of Poisson random measures we can prove that 
a solution to such an SDE can be rewritten 
as the solution to \eqref{SDE_Y_diff_ct} by modifying the death 
rate $r$ and the fragmentation kernel $\kappa$.
\begin{lemma}\label{chgt-eds}
Assume that $\int_0^1 \rtheta(\theta)\kappa(d\theta)<\infty$. Let 
$$\hat{\kappa}(d\theta) = \rtheta(\theta)\left(\int_0^1\rtheta(\theta)\kappa(d\theta)\right)^{-1}\kappa(d\theta),\quad \hat{r}(x) = r(x)\int_0^1\rtheta(\theta)\kappa(d\theta),$$
and $\widetilde{Q}$, $B$ and $N$ be defined as in \eqref{SDE_Y_diff_ct}.
Then, there exists a Poisson random measure $N'$ with intensity $ds\otimes dz\otimes  \hat{\kappa}(d\theta)$ such that $X$ is the pathwise unique solution to 
\begin{align*}
X_t= X_0+ \int_0^t g(X_s)ds+ \int_0^t\sqrt{2 \sigma^2(X_s) }dB_s&+ \int_0^t\int_0^{p(X_{s^-})}\int_{\mathbb{R}_+}z\widetilde{Q}(ds,dx,dz)\\
&+ \int_0^t  \int_0^{\hat{r}(X_{s^-})} \int_0^1 (\theta-1)X_{s^-}N'(ds,dz,d\theta), \nonumber
\end{align*} 
if and only if $X$ is the pathwise unique nonnegative strong solution to
\begin{align*}
X_t= X_0+ \int_0^t g(X_s) ds+ \int_0^t\sqrt{2 \sigma^2(X_s) }dB_s&+ \int_0^t\int_0^{p(X_{s^-})}\int_{\mathbb{R}_+}z\widetilde{Q}(ds,dx,dz)\\
&+ \int_0^t \int_0^1\int_0^{r(X_{s^-})\rtheta(\theta)}(\theta-1)X_{s^-}N(ds,dz,d\theta). \nonumber
\end{align*}
\end{lemma}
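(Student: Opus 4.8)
My plan is to couple the two driving Poisson measures by an explicit deterministic rescaling of the ``selection'' coordinate, so that the two fragmentation integrals become pathwise identical; since the drift, the diffusion and the $\widetilde Q$-jumps are literally the same in both equations, it suffices to treat the last integral. I will write $c:=\int_0^1\rtheta(\theta)\kappa(d\theta)$, which is finite by assumption and may be taken positive (otherwise $\hat r\equiv 0$, there are no fragmentation jumps on either side and the statement is trivial), so that $\hat\kappa(d\theta)=c^{-1}\rtheta(\theta)\kappa(d\theta)$ and $\hat r(x)=c\,r(x)$. The guiding observation is that in the original equation a point $(s,z,\theta)$ of $N$ triggers a jump exactly when $z\le r(X_{s^-})\rtheta(\theta)$, whereas in the rewritten equation the threshold $\hat r(X_{s^-})=c\,r(X_{s^-})$ no longer depends on $\theta$: the $\theta$-dependence has been absorbed into the tilted law $\hat\kappa$.

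Concretely, I would introduce the deterministic map
\[
\Psi(s,z,\theta)=\Big(s,\tfrac{c\,z}{\rtheta(\theta)},\theta\Big),\qquad (s,z,\theta)\in\R_+\times\R_+\times\{\theta\in[0,1]:\rtheta(\theta)>0\},
\]
and set $N':=N\circ\Psi^{-1}$, discarding the set $\{\rtheta=0\}$, which carries no $\hat\kappa$-mass and produces no jumps. For each fixed $(s,\theta)$ with $\rtheta(\theta)>0$ the map $z\mapsto c\,z/\rtheta(\theta)$ is a linear bijection of $\R_+$, hence $\Psi$ is a measurable bijection of the relevant state space, and the mapping theorem for Poisson random measures will show that $N'$ is again a PPM whose intensity is the pushforward of $ds\otimes dz\otimes\kappa(d\theta)$. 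The change of variables $w=c\,z/\rtheta(\theta)$, i.e.\ $dz=\rtheta(\theta)\,c^{-1}\,dw$, yields
\[
\int \phi\big(\Psi(s,z,\theta)\big)\,ds\,dz\,\kappa(d\theta)
=\int \phi(s,w,\theta)\,\frac{\rtheta(\theta)}{c}\,ds\,dw\,\kappa(d\theta)
=\int \phi(s,w,\theta)\,ds\,dw\,\hat\kappa(d\theta),
\]
so $N'$ has the announced intensity $ds\otimes dz\otimes\hat\kappa(d\theta)$. Since $\Psi$ does not involve the unknown $X$, the measure $N'$ is a deterministic functional of $N$, hence adapted to the same filtration, and the construction is reversible through $\Psi^{-1}$.

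It then remains to verify that the two fragmentation integrals coincide pathwise for every c\`adl\`ag $X$. I will note that an atom $(s,z,\theta)$ of $N$ contributes $(\theta-1)X_{s^-}\mathbf 1_{\{z\le r(X_{s^-})\rtheta(\theta)\}}$ to the original integral, while the image atom $(s,w,\theta)=\Psi(s,z,\theta)$ of $N'$ contributes $(\theta-1)X_{s^-}\mathbf 1_{\{w\le \hat r(X_{s^-})\}}$ to the rewritten one; the jump sizes and fragmentation parameters agree, and the two indicators coincide because $w\le\hat r(x)=c\,r(x)$ is equivalent to $c\,z/\rtheta(\theta)\le c\,r(x)$, i.e.\ to $z\le r(x)\rtheta(\theta)$. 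Summing over atoms identifies the two last integrals pathwise, so the two SDEs are driven by the same effective noise; a process solves one if and only if it solves the other, and since pathwise uniqueness is an intrinsic property of the equation it transfers across this identification. The only delicate point, and the step I would be most careful about, is the treatment of $\{\rtheta=0\}$ together with the verification that $\Psi$ is a genuine bijection of the state space, which is precisely what licenses the application of the mapping theorem and guarantees reversibility.
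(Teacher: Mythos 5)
Your proposal is correct, and it supplies precisely the argument the paper omits: the paper states Lemma \ref{chgt-eds} with no proof at all, justifying it only by the phrase ``using the properties of Poisson random measures'', and your explicit construction --- pushing $N$ forward under the bijective rescaling $\Psi(s,z,\theta)=(s,cz/\rtheta(\theta),\theta)$ of the thinning coordinate, checking via the mapping theorem and the change of variables $w=cz/\rtheta(\theta)$ that the image measure $N'$ has intensity $ds\otimes dw\otimes\hat\kappa(d\theta)$, and observing that the acceptance indicators $\mathbf{1}_{\{z\leq r(x)\rtheta(\theta)\}}$ and $\mathbf{1}_{\{w\leq \hat r(x)\}}$ coincide atom by atom --- is exactly the standard argument being alluded to. Two minor points you could tighten: when $c=\int_0^1\rtheta\,d\kappa=0$ the measure $\hat\kappa$ in the statement is itself undefined (so the degenerate case is a defect of the statement, not of your proof, and your resolution is the natural one); and in transferring \emph{pathwise uniqueness} between the two equations, note that $\mathcal{F}^{B,\widetilde Q,N'}\subseteq\mathcal{F}^{B,\widetilde Q,N}$ may be strict (the atoms of $N$ on $\{\rtheta=0\}$ are lost), which is harmless because that extra randomness is independent of the drivers and enters neither equation, but deserves the one-line remark.
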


\section*{Acknowledgments}
The authors are grateful to V. Bansaye for his advice and comments and to B. Cloez for fruitful discussions.
This work was partially funded by the Chair `Mod\'elisation Math\'ematique et Biodiversit\'e' of VEOLIA-Ecole Polytechnique-MNHN-F.X., by the French national research agency (ANR) via project ANR NOLO (ANR-20-CE40-0015) and in the framework of the `France 2030' program (ANR-15-IDEX-0002) and by the LabEx PERSYVAL-Lab (ANR-11-LABX-0025-01).

\bibliographystyle{abbrv}
\bibliography{biblio}

\end{document}